\DeclareMathOperator{\proj}{proj}
\renewcommand{\emptyset}{\varnothing}
\newcommand{\la}{\langle}
\newcommand{\ra}{\rangle}
\newcommand{\crap}{Z} 
\newcommand{\taub}{\tau}
\newcommand{\xib}{\xi}
\newcommand{\etab}{\eta}
\newcommand{\taubhat}{\hat{\tau}}
\newcommand{\xibhat}{\hat{\xi}}
\newcommand{\etabhat}{\hat{\etab}}
\DeclareMathOperator{\sgn}{sgn}
\newcommand{\hamvf}{{}^bH}
\newcommand{\taue}{\tau_e}
\newcommand{\xie}{\xi_e}
\newcommand{\etae}{\eta_e}
\newcommand{\bu}{{bu}}
\DeclareMathOperator{\liptic}{ell}
\newcommand{\Hone}{\mathcal{H}}
\newcommand{\Hmone}{\mathcal{H^*}}
\newcommand{\strng}{\mathsf{S}}
\newcommand{\Tbstar}{{}^bT^*}
\newcommand{\Tb}{{}^bT}
\newcommand{\Testar}{{}^eT^*}
\newcommand{\Tbdotstar}{{}^b\dot{T}^*}
\newcommand{\Sbdotstar}{{}^b\dot{S}^*}
\newcommand{\Te}{{}^eT}
\newcommand{\fib}{\mathsf{F}}
\newcommand{\fcal}{\mathscr{F}}
\newcommand{\Psib}{\Psi_{\bu}}
\newcommand{\Phib}{{}^b\Phi}
\newcommand{\sigmab}[1]{\sigma_b^{#1}}
\DeclareMathOperator{\Opb}{Op_b}
\DeclareMathOperator{\tOpb}{\widetilde{Op}_b}
\DeclareMathOperator{\Diff}{Diff}
\newcommand{\V}{\mathcal{V}}
\newcommand{\Hb}{H_{b\F}}
\newcommand{\HbHone}{H_{b\F,\Hone}}
\newcommand{\HbHonestar}{H_{b\F,\Hone^*}}
\newcommand{\Omegabar}{\overline{\Omega}}
\newcommand{\hilbX}{\mathcal{X}}
\newcommand{\hilbY}{\mathcal{Y}}
\newcommand{\opWFb}{\operatorname{WF}'_{b}}
\newcommand{\WFb}{\operatorname{WF}_{b\F}}
\newcommand{\WFbh}{\operatorname{WF}_{b\F,\Hone}}
\newcommand{\WFbhstar}{\operatorname{WF}_{b\F,\Hmone}}
\newcommand{\WFbhu}{\operatorname{WF}_{b\F,\Hone}}
\newcommand{\WFbhstaru}{\operatorname{WF}_{b\F,\Hmone}}
\newcommand{\Sestar}{{}^eS^*}
\newcommand{\Sbstar}{{}^bS^*}
\newcommand{\unif}{u}
\newcommand{\Lambdaplus}{\Lambda_{ \ell} }
\newcommand{\Lambdaminus}{\Lambda_{-\ell} }
\newcommand{\Lambdahalf}{\Lambda_{\frac{1}{2}}}
\newcommand{\Lambdaneghalf}{\Lambda_{-\frac{1}{2}}}
\newcommand{\ind}{\varpi}
\newcommand{\C}{\mathsf{A}}
\newcommand{\IC}{\mathcal{I}}
\newcommand{\E}{\mathscr{E}}
\newcommand{\M}{\mathscr{M}}
\DeclareMathOperator{\Id}{Id}
\newcommand{\tP}{P_W}
\newcommand{\tQ}{\widetilde{Q}}
\newcommand{\F}{\mathsf{F}}
\newcommand{\Psibf}{\Psi_{b\F}}
\newcommand{\Diffbf}{\operatorname{Diff}_{b\F}}
\newcommand{\CIf}{\mathcal{C}^\infty_{\fib}}
\newcommand{\WFbf}{\operatorname{WF}_{b\F}}
\newcommand{\pert}{\Upsilon}
\newcommand{\hypsurf}{Y}
\DeclareMathOperator{\spn}{span}
\newtheorem{dullhyp}{Hypothesis}
\newenvironment{hyp}[2][]
  {\begin{dullhyp}[#1]}
  {\end{dullhyp}}
\numberwithin{theorem}{section}
\title{Wave propagation on rotating cosmic string spacetimes}
\author{Jared Wunsch}
\address{Department of Mathematics\\Northwestern University\\Evanston
  IL 60208}
\email{jwunsch@math.northwestern.edu}
\author{Katrina Morgan}
\address{Department of Mathematics\\Temple University\\Philadelphia PA 19122}
\email{morgank@temple.edu}
\thanks{The authors thank Jeff Galkowski, Andr\'as Vasy, and Maciej Zworski for helpful
  conversations.  An anonymous referee made a number of corrections
  and useful suggestions.   KM was partly supported by NSF Postdoctoral
  Fellowship DMS--2002132.   JW was partially supported
by Simons Foundation grant 631302, NSF grant DMS--2054424, and a
Simons Fellowship. }
\begin{document}

\begin{abstract}
	A rotating cosmic string spacetime has a singularity along a timelike curve corresponding to a one-dimensional source of angular momentum. Such spacetimes are not globally hyperbolic: they admit closed timelike curves near the string. This presents challenges to studying the existence of solutions to the wave equation via conventional energy methods. In this work, we show that semi-global forward solutions to the wave equation do nonetheless exist, but only in a microlocal sense.  The main ingredient in this existence theorem is a propagation of singularities theorem that relates energy entering the string to energy leaving the string.  The propagation theorem is localized in the fibers of a certain fibration of the blown-up string, but global in time, which means that energy entering the string at one time may emerge previously.
\end{abstract}

\maketitle
\tableofcontents

\section{Introduction}
\subsection{Rotating cosmic string metrics and wave propagation}
Cosmic string spacetimes are cosmological models that feature
singularities along timelike curves (``strings''\footnote{Not to be
  confused with the superstrings of high energy particle physics.}).  Starting with work of Kibble
\cite{Ki:76}, physicists have speculated on their formation in the
early universe, and detection of these structures, or bounds on their
prevalence, remain subjects of active current experimental research
\cite{LI:21}.  In work of Deser--Jackiw--'t Hooft \cite{DeJaTh:84},
the simplest cosmic string solutions are viewed as solutions to the
Einstein equations in $2+1$ dimensions, with a third spatial
dimension along the string quotiented out.  Such solutions are, of
necessity, flat away from the singularity.  The solution corresponding
to a static string is then simply the product spacetime given by $\RR$
(time variable) times a flat 2d cone.  The simplest \emph{rotating string}
solution, however, is of a more interesting Lorentzian character,
given in cylindrical coordinates by the metric
\begin{equation}\label{metric}
g=(dr^2 + r^2 d\varphi^2)-(dt^2-2\C \, dt \, d\varphi+ \C^2d\varphi^2).
\end{equation}
Here $\C=-4GJ$ where $G$ is the gravitational constant and $J$ is the
angular momentum of the string.  This metric has two features of
unusual interest from the point of view of wave propagation: it is singular at $r=0,$ and it admits closed
timelike curves, hence is not globally hyperbolic.

The corresponding wave operator $\Box_g$ is given by
	\begin{equation} \label{operator}
	\begin{split}
		\Box_g &= -\left(1-\frac{\C ^2}{r^2}\right) \partial_t^2 + \Delta + \frac{2\C }{r^2} \partial_t\partial_\varphi \\
			&= -\partial_t^2 + r^{-2} (r\pa_r)^2 + r^{-2}(\C  \pa_t + \pa_\varphi )^2
	\end{split}
      \end{equation}
Owing to the absence of global hyperbolicity, we are unable to prove
existence of solutions to the wave equation by conventional energy
methods.  In this paper, we thus resort to proving \emph{microlocal}
energy estimates in order to deal with the propagation along rays
passing through the string at $r=0.$  We then use these estimates to
prove the existence of \emph{microlocally forward} solutions to the
inhomogeneous equation $\Box_g u=f$ (and perturbations thereof).
Such solutions have singularities only along the
  (asymptotically) forward flowout
  of the singularities of $f$, together with the forward flowout of
  the string itself.
      
\subsection{Propagation of singularities}
The singularities of the metric and wave operator give
an associated fibration of $\{r=0\}$ (best regarded as the front face
of a real blowup of the string): the helical fibers are integral curves of $\C
\pa_t + \pa_\varphi,$ or, equivalently, are level sets of 
$\varphi-t/\C\bmod 2 \pi \ZZ.$
        
In this paper we study lower-order perturbations of $\Box_g,$
with a
class including real-valued potentials and Klein--Gordon mass
parameters as well as certain magnetic potentials: let $\pert$ be a first
order 
differential operator of the form
\begin{equation}\label{pert}
\pert= f_1 \pa_t +f_2 \pa_\varphi+f_3 r \pa_r +f_4
\end{equation}
where $f_\bullet=f_\bullet(r,t,\varphi)$ are complex-valued smooth functions, with uniform
derivative bounds in $\RR\times [0, \infty) \times S^1$.
Let
  $$
P=\Box_g + \pert.
$$

Standard propagation of singularities results hold along all geodesics
not hitting the $r=0$ singularity of the metric.  At $r=0,$ however,
new techniques are required to obtain such propagation results, or, equivalently, microlocalized energy estimates.  Thus our first
result concerns the propagation of regularity through $r=0.$ Since,
away from $r=0,$ wavefront set propagates along null bicharacteristics
(a.k.a.\ lightlike geodesics), we are concerned with those null
bicharacteristics that reach $r=0.$ Along such bicharacteristics, the
$t$ variable is in fact monotone, with $dr/dt=\pm 1,$ $d\varphi/dt=0$
(see Section \ref{sectionBichars} for details).  We call such curves
``incoming'' or ``outgoing'' according to whether $dr/dt=\mp 1$ (the
choice of $-$ corresponds to incoming).  Different incoming and outgoing curves hit $r=0$ at
different angles $\varphi,$ different times $t,$ and with different
energies $\tau$ (dual variable to $t$ in the cotangent bundle).  We
let
$$
\fcal_{I/O, \varphi_0, \tau_0}
$$
denote the union of these incoming resp.\ outgoing null bicharacteristics, into or out
of a specified fiber with
$\varphi_0\equiv \varphi-t/\C,$ and with $\tau=\tau_0.$  We denote the
union of all incoming/outgoing null bicharacteristics by
$$
\fcal_{I/O}\equiv \bigcup_{ \varphi_0, \tau_0} \fcal_{I/O, \varphi_0, \tau_0}.
$$

We define a Sobolev space closely associated with the Dirichlet form
of $\Box_g:$
let the norm on $\Hone$ be defined as
	\[
		 \|u\|_{\Hone}^2 =  \|\pa_r u\|_{L^2(X)}^2 +
                 \|r^{-1}(\C \pa_t + \pa_\varphi)  u\|_{L^2(X)}^2 + \|\pa_tu\|_{L^2(X)}^2 + \| \pa_\varphi u\|_{L^2(X)}^2
          + \| u\|_{L^2(X)}^2.
         \]

In its simplest form, our propagation of singularities theorem is as follows:
\begin{theorem}\label{theorem:firstpropagation}
Let $u \in \Hone$ near $r=0$ and let $Pu=0.$ If the wavefront set of $u$ is disjoint from
$\fcal_{I, \varphi_0, \tau_0}$
uniformly in $t$ then $u$ has no wavefront set at
$\fcal_{O, \varphi_0, \tau_0},$ uniformly in $t.$
\end{theorem}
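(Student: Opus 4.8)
The plan is to prove the theorem via a positive-commutator (Hörmander-type) argument adapted to the $b$-fibred setting, working on the blowup of the string with the "uniform in $t$" condition encoded as a $b$-pseudodifferential estimate. First I would set up the correct phase space: blow up $\{r=0\}$, introduce the fibration by the helical fibers $\varphi_0 = \varphi - t/\C$, and pass to the $b$-cotangent bundle ${}^b\dot T^*$ with compactified $t$. The incoming and outgoing families $\fcal_{I,\varphi_0,\tau_0}$ and $\fcal_{O,\varphi_0,\tau_0}$ become (families of) invariant submanifolds of the $b$-characteristic set; the dynamics of the rescaled Hamilton vector field $\hamvf$ near $r=0$ — with $r$ a defining function — should exhibit $\fcal_I$ as a source-type and $\fcal_O$ as a sink-type manifold for the flow in the normal ($r$) direction, which is exactly the structure that makes a radial-point/propagation estimate possible. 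The key preliminary computation, which I expect is done in Section \ref{sectionBichars}, is the linearization of the flow transverse to these manifolds: one needs the sign of the normal expansion/contraction rate and needs to check that $\pa_t$-monotonicity (the fact that $dr/dt = \pm 1$, $d\varphi/dt = 0$ along these curves) survives after the $b$-rescaling, so that $t$ — or rather its compactified version — serves as a propagation parameter that is uniform down to the boundary.

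The heart of the argument is the commutator estimate. I would choose a $b$-pseudodifferential operator $A \in \Psibf$ whose symbol $a$ is supported in a small conic neighborhood of $\fcal_O$, elliptic on $\fcal_{O,\varphi_0,\tau_0}$, and constructed so that along the backward $\hamvf$-flow the support of $a$ is pushed into a region where either (i) we have a priori control from the hypothesis that $\WF(u)$ misses $\fcal_{I,\varphi_0,\tau_0}$, or (ii) we are at finite $r>0$ where standard propagation of singularities (valid away from the metric singularity) applies. Then one computes $\la i[P,A^*A]u,u\ra$ in two ways: pairing against $Pu=0$ on one side, and extracting from the principal symbol $\hamvf a^2$ a term that is $\geq 0$ modulo a "good" elliptic term controlling $\|Au\|_{\Hone}$, a term supported where regularity is already known, and lower-order errors absorbed using the perturbation structure \eqref{pert} of $\pert$ (the first-order terms $f_\bullet$ contribute only to lower order relative to the $\Hone$-based estimate, using the Dirichlet-form structure of the $\Hone$ norm). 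The monotonicity of $t$ along the bicharacteristics is what guarantees the commutator term has a definite sign — the symbol $a$ is taken monotone along the flow in the direction of increasing/decreasing $t$. A regularization argument (inserting a cutoff $\la \epsilon |\sigma|\ra^{-N}$ and letting $\epsilon \downarrow 0$) makes the formal pairing rigorous given only the assumed $\Hone$ regularity.

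Propagating this local statement to the claimed uniform-in-$t$ conclusion requires one more step: because the fibration is global in time, a bicharacteristic entering the fiber $\varphi_0$ at some time may leave it at an earlier time (as the abstract emphasizes), so I would not try to run a single forward-in-$t$ iteration but instead use the compactness of the relevant portion of ${}^b\dot S^*$ over the fiber together with the two "escape hatches" above to conclude that the elliptic set of the microlocal estimate covers all of $\fcal_{O,\varphi_0,\tau_0}$. The main obstacle, and the place where the $b$-fibred machinery is essential, is precisely this matching of "incoming" to "outgoing" energy through the degenerate point $r=0$: the Hamilton flow there is not a nondegenerate radial point in the usual sense but a whole fibred family, so one must check that the threshold/regularity condition (the analogue of the $s > 1/2 + \beta_0$ condition in Melrose-type radial point theorems) is satisfied by the given space $\Hone$ uniformly along the fiber, and that the normal dynamics does not degenerate as $\tau_0 \to 0$ or as $t \to \pm\infty$. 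If the normal expansion rate were to vanish somewhere along a fiber the argument would break, so verifying its uniform positivity — presumably a consequence of the explicit form \eqref{operator} — is the crux.
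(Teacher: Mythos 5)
Your overall strategy---a positive commutator argument with fiber-invariant b-pseudodifferential test operators localized near the fiber, with error terms escaping either into the incoming region controlled by hypothesis or into $r>0$ where interior propagation applies, plus a regularization in $\ind$---is indeed the paper's strategy (the proof of Theorem~\ref{theorem:reflection}, modeled on \cite{MeVaWu:08}). But the mechanism you propose for the sign of the commutator would fail. You write that ``the monotonicity of $t$ along the bicharacteristics is what guarantees the commutator term has a definite sign,'' with the symbol taken monotone in $t$ along the flow. The rescaled Hamilton vector field $(r^2/2)\hamvf_p$ vanishes \emph{identically} on the characteristic set over $r=0$ (this is stated explicitly after \eqref{bHam}), and in particular its $\pa_t$-component $r^2\taub-\C(\C\taub+\etab)$ vanishes there; so no function of $t$ (or of any base variable) can produce a positive commutator at the degenerate set $\Sbdotstar_{\pa X}X$, which is exactly where the propagation must be pushed through. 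The paper instead extracts positivity from the fiber variable $\xib$: along the rescaled flow $\dot\xib=-(\xib^2+(\C\taub+\etab)^2)\le 0$, so the escape function is $\phi=-\xibhat+\beta^{-2}\delta^{-1}\omega$ with $\omega=r^2+(\varphi-t/\C-\varphi_0)^2$, and the quadratic vanishing of $\dot\xib$ at the critical set is compensated by the choice $\chi_0(s)=e^{-1/s}$, which forces $a^2\lesssim\delta\,b^2$ where $b^2$ is the main commutator term; this is what allows the first-order errors coming from $\pa_r a$, $\pa_t a$, and the perturbation $\pert$ to be absorbed by taking $\beta$ large and $\delta$ small. Regularity at the outgoing rays is then obtained not by a sink-type radial estimate at $\fcal_O$ but by concluding $\varrho\notin\WFbh^s u$ at the compressed point and invoking closedness of the wavefront set plus interior propagation.

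A second, related omission: since $\Box_g\in r^{-2}\Diff_\bu^2(X)$ is \emph{not} in the b-calculus, the pairing $\la i[P,A^*A]u,u\ra$ cannot be treated as a b-operator of order $2m+1$ with principal symbol $\hamvf_p(a^2)$. One must organize the commutator in Dirichlet form, $D_r^*L_rD_r+(r^{-1}\fib)L_\fib(r^{-1}\fib)+D_r^*L'+L''D_r+L_0$ (Lemma~\ref{lemma:maincommutator}), which is only possible because $A$ is fiber-invariant (so $[\fib,A^*A]=0$) and because of the commutation identities of Lemma~\ref{lemma:easycomm}; the resulting quadratic form is then bounded below by $c\|Bu\|_{\Hone}^2$ via the microlocal elliptic estimate. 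Your worry about a threshold condition in the sense of radial-point theorems is not the relevant issue here---regularity is measured relative to $\Hone$ with no variable weights---and the worry about $\tau_0\to 0$ is moot since $\taub=0$ is disjoint from the characteristic set.
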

The notion of uniformity used here will be elucidated below; moreover
the full statement of the theorem, which is Theorem~\ref{theorem:reflection} below, involves a notion of wavefront set
that is appropriately defined down to $r=0$ (b-wavefront set relative
to $\Hone$).  The full statement of the theorem
also includes the inhomogeneous equation $Pu=f$ and deals with a
family of (b-)Sobolev-based wavefront sets, with orders of regularity measured relative to $\Hone$.  We can also relax the hypotheses on $u$ to allow for a
range of regularities, albeit always measured relative to $\Hone.$

The theorem can be regarded as an energy estimate: it says that
estimates along incoming rays down to $r=0$ can be propagated outward
from $r=0$ in a manner that preserves the \emph{fibration structure}
of the boundary (i.e.\ is local in $\varphi_0$ above) and that
preserves the sign of $\tau$, the dual variable to $t$.

\subsection{Existence of microlocally forward solutions}

Forward solvability of an equation such as $\Box_gu=f$ is a thorny
problem.  As noted in \cite{MoWu:21}, the single-mode version of this
equation is elliptic in the region $\{r<\C\}$.  Suppose we could solve,
e.g., $\Box_gu=\delta_{t_0,r_0} e^{ik\varphi}$ with $r_0>\C$ and with
$u$ supported in $t>T_0$ for some $T_0 \in \RR.$ Then 
by unique
continuation for elliptic equations, $u$ would vanish identically in $\{r<\C\}$.
By the proof of Lemma 6 of \cite{MoWu:21} (with the zero RHS of the equation
used there replaced by $\delta$), $u$ would then further vanish identically for
$r<r_0$, contradicting propagation of singularities in the region
$\{r>\C\}$, where the equation is hyperbolic.

Thus we should not, in general, seek solutions supported in any
set of the form $\{t>T_0\}$: we can at best hope for a weaker notion of
forward solution than that obtained by constraining the support in $t$.

To put the difficulty differently, we do not have a natural global energy estimate for solutions to
$Pu=f,$ as the conserved energy associated with the Killing vector
field $\pa_t$ has mixed sign once we allow the support of $u$ to
overlap $r<\C.$  Thus, the microlocal estimate of
Theorem~\ref{theorem:firstpropagation} above is our only available
energy estimate near $r=0$.  As
energy estimates for adjoint operators usually result in existence
results, this does allow us to prove an existence theorem for forward
solutions to $Pu=f,$ provided we interpret the forward character
\emph{microlocally}: we can characterize the wavefront set of the solution
$u$ as the forward-in-time flowout of the wavefront set of $f$ within
the characteristic set, \emph{together with the forward flowout of the
  string.}  Here ``forward'' and ``backward'' still make sense,
despite the fact that $t$ is not monotone along the null
bicharacteristics which do not reach the string, owing to the fact that asymptotically all
bicharacteristics that do not arrive at $r=0$ escape to the region
$r>\C$ where $t$ becomes monotone along the flow (see Section \ref{sectionBichars} for details).

In order to make the hypotheses on the inhomogeneity work uniformly
down to $r=0,$ regularity statements here involve the \emph{b-Sobolev
spaces} $H_b^m,$ where for $m$ a positive integer, membership in this
space means that applying up to $m$-fold products of the vector fields $r
\pa_r, \pa_\varphi, \pa_t$ (with uniformly bounded coefficients) leaves a function in $L^2.$ Let
$\Phi_+$ denote the (asymptotically) forward-in-time bicharacteristic
flow, over $\{r>0\},$ and let $\Sigma$ denote the characteristic set
of $\Box_g.$
\begin{theorem}\label{theorem:firstforward}
Assume that the perturbation $\pert,$ given by \eqref{pert}, is analytic or else commutes
with both $\pa_t$ and $\pa_\varphi.$
Given compact sets $K_0\subset K \subset X$ with  $K_0 \subset K^\circ$,
if
$f \in H_b^{m}(X)$ with $\supp f \subset K_0$,
there exists $u \in H_b^{m+1}(K^\circ)$ with
$$
Pu=f\quad \text{ on } K^\circ,
$$
such that over $K^\circ \cap\{r>0\},$
\begin{equation}\label{WFrelation}
\WF (u)\backslash \WF(f) \subset \fcal_O\cup \Phi_+ (\WF(f)\cap \Sigma).
\end{equation}
The solution $u$ is unique modulo a distribution $w$ with
$\WF (w) \subset \fcal _O.$
\end{theorem}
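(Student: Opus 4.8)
The plan is to obtain both existence and uniqueness modulo $\fcal_O$ by duality, from an a priori estimate for the adjoint $P^*=\Box_g+\pert^*$ — which, relative to the density $r\,dr\,d\varphi\,dt$ making $\Box_g$ formally self-adjoint, is again of the form \eqref{pert} — with the propagation theorem (Theorem~\ref{theorem:firstpropagation}, in its full form Theorem~\ref{theorem:reflection}) playing the role of a conventional energy estimate. Testing against $v\in C^\infty_c(K^\circ)$, one seeks to bound $\|v\|_{H_b^{-m}(K)}$ by $\|P^*v\|_{H_b^{-m-1}(K)}$ \emph{modulo the outgoing-from-string directions $\fcal_O$}, in which the adjoint problem is uncontrolled. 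Granting this, $P^*v\mapsto\langle f,v\rangle$ is bounded on a suitable subspace of $H_b^{-m-1}(K^\circ)\cong(H_b^{m+1}(K^\circ))^*$, hence is represented, after extension by Hahn--Banach, by some $u\in H_b^{m+1}(K^\circ)$ with $Pu=f$ on $K^\circ$; the ``modulo $\fcal_O$'' dualizes to the ambiguity in $u$ being a distribution with wavefront set in $\fcal_O$. Since support conditions in $t$ are unavailable (see the discussion preceding the theorem), this estimate cannot be a classical energy estimate, and its natural setting is a pair of anisotropic b-Sobolev spaces with orders decreasing along the forward flow $\Phi_+$, membership of $u$ in $H_b^{m+1}(K^\circ)$ being then a consequence of the choice of orders.

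The estimate is assembled from three ingredients. Over $\{r>0\}$ the standard machinery applies: elliptic regularity off $\Sigma$ and real-principal-type propagation of singularities along the bicharacteristic flow, used together with the fact (Section~\ref{sectionBichars}) that every bicharacteristic not meeting $r=0$ is asymptotically absorbed into $\{r>\C\}$, where $t$ is monotone along the flow — which is what makes ``forward'' meaningful and allows the order functions to be chosen consistently. At $r=0$ the reflection theorem, applied to $P^*$ — equivalently Theorem~\ref{theorem:firstpropagation} with incoming and outgoing rays interchanged and regularity measured relative to $\Hone^*$ — supplies the one link unavailable from energy methods, bounding the b-wavefront set on rays leaving $r=0$ by that on rays entering, \emph{uniformly in $t$}. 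That uniformity is essential: on the fixed compact set $K$ infinitely many incoming and outgoing rays meet the string at all times, and regularity entering at one time may emerge at another. Chaining these microlocal estimates across $K$ gives the global estimate modulo $\fcal_O$.

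The step I expect to be the main obstacle is upgrading ``solvability modulo $\fcal_O$'' to genuine solvability on $K^\circ$, i.e.\ showing the cokernel vanishes; this is where the hypothesis that $\pert$ is analytic or commutes with both $\pa_t$ and $\pa_\varphi$ enters. A cokernel element is a solution $w$ of $P^*w=0$ subject to microlocal constraints dual to those defining the target, and the point — exactly as in the discussion following the statement of Theorem~\ref{theorem:firstforward}, and as in the proof of Lemma~6 of \cite{MoWu:21} — is that in $\{r<\C\}$ the mode-reduced operator is elliptic, which with those constraints forces $w$ to vanish there; one then continues the vanishing across the noncharacteristic analytic hypersurface $\{r=\C\}$, by Holmgren's theorem in the analytic case and mode by mode in the commuting case, and concludes $w\equiv0$ by contradicting propagation of singularities in the hyperbolic region $\{r>\C\}$.

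Finally, the wavefront inclusion \eqref{WFrelation} follows a posteriori by applying propagation of singularities to $u$ itself. Over $\{r>0\}$, where the classical and b-notions of regularity coincide, any point of $\WF(u)\setminus\WF(f)$ lies in $\Sigma$ and on a bicharacteristic contained in $\WF(u)\setminus\WF(f)$; following that bicharacteristic backward along the flow, either it stays in $\Phi_+(\WF(f)\cap\Sigma)$, or it meets $r=0$ as an outgoing ray and so lies in $\fcal_O$, or — by the contrapositive of the reflection theorem — it forces a singularity on the incoming rays of its fiber which, $u$ belonging to the forward class in which it was constructed, can by backward propagation only descend from $\WF(f)$, again placing the ray in $\Phi_+(\WF(f)\cap\Sigma)$. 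This gives \eqref{WFrelation}.
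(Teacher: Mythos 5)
Your overall architecture---duality from an adjoint estimate, with the reflection theorem supplying the link across $r=0$ and the hypothesis on $\pert$ entering through a unique-continuation argument for the cokernel---matches the paper's. But two steps are missing in a way that would stop the proof. First, the adjoint estimate you invoke, $\|v\|\lesssim\|P^*v\|$ with no remainder, is not what microlocal propagation yields: every such estimate carries a lower-order error term $\|v\|_{H_b^{-N}}$, and on a manifold with boundary $\dot H_b^{s+1}(K')$ does \emph{not} embed compactly into $\dot H_b^{-N}(K')$, so that error cannot be removed by the usual Fredholm/compactness argument. The paper spends Lemmas~\ref{lemma:diophantine}--\ref{lemma:compactembedding} precisely on this point: a Poincar\'e-type inequality for $\fib$, a Mellin-transform argument trading two b-derivatives for a factor of $r^2$ in solutions, and the resulting compact embedding of $\dot H_b^{s+1,0}\cap\dot H_b^{s-2,2}$ into $\dot H_b^{-N,1}$. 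Without some version of this, the Hahn--Banach step has no subspace on which the functional is bounded. Second, your mechanism for encoding ``forward''---variable-order b-Sobolev spaces decreasing along $\Phi_+$---is never constructed, and constructing a suitable order function is delicate here because $t$ is not monotone along the flow near the string. The paper instead inserts a complex absorbing potential $W$ supported in $\{r>R\}$ and elliptic on the incoming directions there, and proves (Lemma~\ref{lemma:bichars}) that every bicharacteristic meeting $K$ and not lying in $\fcal_O$ passes through $\Ell(W)$ backward in time while remaining in a fixed compact set $K'$. That geometric lemma, or an order function doing the same job, is the load-bearing input for both the existence and the wavefront relation \eqref{WFrelation}, and it is absent from your sketch.

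The cokernel discussion also does not quite work as written. In the analytic case the perturbation need not commute with $\pa_\varphi$, so there is no mode-reduced elliptic operator on $\{r<\abs{\C}\}$ to appeal to; the paper's argument there is pure global Holmgren, starting from the vanishing of $w$ for $t\le -T'$ (forced by compact support, not by ellipticity) and sweeping two explicit families of noncharacteristic hypersurfaces out to $r=R$. The mode decomposition plus Fourier transform and Picard--Lindel\"of is reserved for the commuting case. Moreover, the conclusion obtained---and all that is needed---is that nullspace elements of the modified adjoint are supported in $\{r>R\}$, hence automatically orthogonal to $f$; one does not conclude $w\equiv 0$, which would require controlling the region where the absorbing potential lives.
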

A more general version of this result that does not entail such strong
hypotheses on $\pert$ appears below as Theorem~\ref{theorem:forward}.
Theorem~\ref{theorem:forward} also specifies the
microlocal regularity of the solution at $r=0$ and allows for an extension to non-integer Sobolev regularity.

Thus, ``forward'' solutions exist semi-globally (that is to say, over any desired
compact set) for any inhomogeneity $f,$ but the wavefront set of the
resulting solutions may contain wavefront set propagating forward in
time that emanates from the string (i.e., from $r=0$) \emph{at times prior to 
  the support of $f$}.  The string thus may emit information about
disturbances that are yet to occur.

\subsection{Prior work}

The rotating cosmic string metric was introduced by Deser--Jackiw--'t
Hooft in \cite{DeJaTh:84}, but the literature on the behavior of waves
on this background seems to have been little studied.  Our
investigation of solvability of wave equations on this non-causal
background owes a considerable debt to the pioneering work of Bachelot
\cite{Ba:02}, who has obtained a number of results about existence and
uniqueness of
solutions to the wave equation as well as addressing problems in
scattering theory.  Bachelot's results do not apply to the
metric here owing to the singularity; our focus on forward solutions
is likewise a different direction of investigation pursued in
\cite{Ba:02}.  Our emphasis on a \emph{microlocally} causal solution
rather than one whose support lies forward of the inhomogeneity is
partly inspired by the celebrated discussion of global parametrices in
\cite[Chapter 6]{DuHo:72}.

The authors' previous work \cite{MoWu:21} address the
problem of obtaining single-mode solutions to $\Box_g u=f$, i.e.,
solutions of the form $u(t,r) e^{ik \varphi}$.  The mode-by-mode
equation changes type across the cylinder $r=\C$, and turns out to be
of \emph{Tricomi type} and hence amenable to some known
microlocal tools, following previous work of Payne \cite{Pa:98} as
well as the methods of Bachelot \cite{Ba:02}.  In this reduced
equation the singularity at $r=0$ occurs in the elliptic region, hence
we were able to deal with it using relatively standard methods.  Here,
by contrast, singularities can propagate down to (and through) $r=0$,
and the more sophisticated tools of the b-pseudodifferential calculus
are needed.

\subsection{Methods of proof}

The necessary tools for showing microlocal energy estimates that
propagate through the string at $\pa X \equiv \{r=0\}$ are positive commutator methods
in an appropriately adapted pseudodifferential calculus.  Here we use
a version of Melrose's \emph{b-calculus} \cite{Me:93}, but with some
important modifications.  First, the noncompactness of the fibration
of $\pa X$ means that we need a calculus with uniform estimates in the
noncompact directions.  Such a calculus is, fortunately, essentially
described already in \cite{Ho:07}, and in an appendix below we
describe the necessary changes and the translation of the results of
\cite{Ho:07} to our setting; the main properties of the calculus are
summarized in Section~\ref{sec:bcalc}.  More seriously, though, the wave
operator $\Box_g$ does not lie in this calculus: it has singular terms
that we will identify below as squares of  \emph{singular edge vector
  fields}.  The relationship between these singular vector fields,
characterized below in terms of their tangency to the boundary
fibration, and the b-calculus, is discussed in
Sections~\ref{sec:bcalc} and \ref{sec:verybasic}.  In the latter
section, we introduce a further sub-calculus of the b-calculus, the
\emph{fiber-invariant} operators, which have improved commutator properties with the
singular edge operators composing $\Box_g.$  This is the calculus from
which we choose test operators, and with respect to which we define
our wavefront set over $\pa X$.

The estimates needed for the propagation of singularities---or,
equivalently, propagation of regularity---are then
split into the elliptic estimates at $\pa X$
(Section~\ref{sec:elliptic}) and the propagation estimate itself
(Section~\ref{sec:propagation}).  These suffice to establish
propagation of regularity, globally in the fibers of $\pa X.$  These
arguments are parallel to those employed in \cite{MeVaWu:08} to
establish propagation of singularities for the wave equation on
manifolds with edge singularities, but the setup must be modified here 
owing to the noncompactness of the fibers of the boundary fibration.

In Section~\ref{sec:causal} we show existence of microlocally forward
solutions to the wave equation.  Here the essential tool is a variant
of the argument used by Duistermaat--H\"ormander \cite{DuHo:72} in
solving equations of real principal type: the fact that singularities
propagate along rays that escape any compact region allows us to get
lower bounds for the adjoint operator acting on compactly supported
test functions, which then translates into an existence theorem for
the distributional equation.  We ensure that the solution is a
\emph{forward} one by replacing our original operator by one with a
complex absorbing potential, which implies that no singularities can
be arriving in the (arbitrary) compact region in which we are trying
to solve the equation.

\section{Geometric Setting} 

Let $\strng\subset\RR^3_{t,x_1,x_2}$ denote the subset $\{x_1=x_2=0,\
t \in \RR\}.$
Let $X=[\RR^3; \strng]$ be the 3-dimensional manifold obtained
by \emph{blowing up} $\strng,$ i.e.\ simply by replacing $x \in
  \RR^2$ with the polar
coordinates $(r,\varphi) \in [0, \infty) \times S^1.$ We equip the interior
$X^\circ$ with the Lorentizan metric given by
\begin{equation} \label{metric}
	\begin{split}
		g &= -dt^2+(r^2-\C^2)d\varphi^2 +2\C dtd\varphi +dr^2\\
		&= -(dt-\C d\varphi)^2 + r^2 d\varphi^2 + dr^2.
	\end{split}
\end{equation}
Note that $\pa X$ is naturally equipped with a fibration
compatible with the metric: define the (complex) vector field $$\fib \equiv i^{-1}(\C 
\pa_t+\pa_\varphi),$$ and that the integral curves of the real vector field $i\fib$
at $r=0$ are the (helical) fibers of a fibration of the cylinder
$$\pa X =\{r=0,
\varphi \in S^1, t \in \RR.\}$$  Let $\pi_0$ denote the projection map
$\pa X \to S^1$ given (somewhat non-canonically) by \begin{equation}\label{pizero}\pi_0\colon (r=0, \varphi,t) \mapsto
	\varphi-t/\C  \bmod 2 \pi\ZZ.\end{equation}  Thus $\pi_0$ maps each point in $\pa X$ to
      the point in the same fiber over $\{t=0\}.$
The rotating cosmic string metric then takes the special form $g=\pi_0^* g_0 + r^2 h$ where
$h$ is a symmetric two-form in $t,\varphi.$  In particular
the only nontrival components in the fiber directions are $O(r^2).$

\subsection{Geometry of bicharacteristics} \label{sectionBichars}
The metric $g$ is not globally hyperbolic: the parametrized closed curve
$$
\big\{(r=r_0, \varphi=s, t=t_0): s \in [0, 2\pi]\big\}
$$
is timelike if $r_0<\C.$  There are, however, no closed causal
geodesics, as will become clear below.

Using usual dual variables in the cotangent bundle (which
will be replaced later on with the better-adapted fiber variables in the
b-cotangent bundle), the
symbol of $\Box_g$ is
$$
p= \tau^2-\xi^2-\frac{(\C \tau+\eta)^2}{r^2},
$$
and has Hamilton vector field
$$
\big( 2 \tau-2r^{-2}\C(\C \tau+\eta)\big)\pa_t-2\xi \pa_r-2r^{-2} (\C
\tau+\eta) \pa_\varphi -2 r^{-3} (\C \tau+\eta)^2 \pa_\xi.
$$
On the characteristic set $\{p=0\}$, the coefficient of $\pa_t$ has
fixed sign as $\tau$ as long as $r>\abs{\C}.$  Thus the $t$ variable is
monotone on the null bicharacteristics as long as they remain in $r>\abs{\C}.$

Only special bicharacteristics reach $r=0,$ just as would be the case for
the Minkowski metric in cylindrical coordinates.  In Minkowski space,
the necessary and sufficient condition would be vanishing of angular
momentum, but here it is the condition
$$
\C \tau+\eta=0,
$$
which is manifestly conserved along the flow. The time variable is
monotone along each of these curves, with $\dot t = 2 \tau;$ moreover
$dr/dt=  -\xi/\tau=\pm 1$ on the characteristic set.

That the metric is in fact flat away from $r=0$ is easily seen via the
(local in $\varphi$) change of variables
$$
t'\equiv t-\C \varphi,
$$
which reduces the metric to the Minkowski metric
$$
g=dr^2 + r^2 d\varphi^2-dt'^2;
$$
projections of null bicharacteristics then become (forward and
backward) Minkowski geodesics,
expressed in cylindrical coordinates
\begin{equation}\label{flatgeodesic}
(r\cos \varphi, r \sin \varphi, t') = (x_0+v_x s, y_0+v_y s, t_0\pm s),
\end{equation}
with $v_x^2+v_y^2=1.$  (Note that such a bicharacteristic stays within
the coordinate patch we have introduced here, effectively by
introducing a branch cut in the $xy$-plane, since $\varphi$
asymptotically increments by
$\pm \pi$ under the flow along a Minkowski null-bicharacteristic.)
In these coordinates, then, it is trivial to see that
every null bicharacteristic (except those hitting $r=0,$ which we
exclude from discussion for now) escapes the region $\{r\leq \abs{\C}\},$
hence, returning to our original coordinate system, we see that our
original $t$ variable is
eventually monotone at both ends of every null bicharacteristic curve,
with $\dot t$ having consistent sign at both ends.  We can thus orient
each bicharacteristic curve in a direction that makes $\dot t$
positive on both ends.

The non-monotocity of $t$ is moreover limited: the coordinate $t'$ is
monotone owing to the Minkowski geometry \eqref{flatgeodesic}.  Since
$t=t'+\C \varphi,$  letting $t(s)$, $t'(s)$ and $\varphi(s)$ denote the values along
the bicharacteristic shows that
$$
t(s)-t(0)=t'(s)-t'(0) + \C (\varphi(s)-\varphi(0));
$$
since, as noted above, the variation in $\varphi$ along a null
geodesic is $\pm \pi,$ if we choose signs such that $\dot t=+1$
asymptotically (i.e., $t(s) \to +\infty$ as $s \to +\infty$), then
$t(s)-t(0)$ can never be less than $-\abs{\C}\pi.$

A geodesic aimed nearly at the string ($r=0$) shows the
non-mono\-ton\-icity of $t$ most strikingly.  As before we can choose the
sign of our pa\-ra\-met\-ri\-za\-tion to arrange
$t'(s)-t'(0)= s$.  If the geodesic passes very close to $r=0$ then
consideration of lines in $\RR^2$ shows that $\varphi$ is approximately constant except at the moment when it
passes by $r=0$---without loss of generality, at time $s=0$---when it rapidly increments or decrements (depending on whether it leaves the string to the left or to the right) by
$\pi-\ep$ in a short interval $s \in [-\delta, \delta].$  Thus,
$$\begin{aligned}t(\delta)-t(-\delta) &= t'(\delta)-t'(-\delta)
+\C(\varphi(\delta)-\varphi(-\delta))\\ &=2\delta \pm\C(\pi-\ep)\\ &\approx \pm\C \pi.\end{aligned}$$  Hence in the limit in which such geodesics pass through
the spatial origin $r=0,$ the time variable instantaneously increments
or decrements by $\C \pi$ at the moment of interaction with the
string, and is otherwise continuous and monotone increasing.

\section{b-Geometry and the b-Calculus}\label{sec:bcalc}

In this section, we describe the geometric setting of the
``b-category'' and its associated analytic objects as espoused by
Melrose in, e.g., \cite{Me:93} (but with a slight complication in its
application in the case at hand
involving uniformity as $\abs{t} \to \infty$).

The set of b-vectors fields on $X$ is:
$$
\V_b(X) = \{\text{smooth vector fields tangent to }\pa X\}.
$$
Thus,
$$
\V_b(X) = \mathcal{C}^\infty(X)\text{-span} (r\pa_r, \pa_t, \pa_\varphi).
$$
We then define the b-differential operators as sums of products of
these vector fields:
$$
\Diff_b^k(X) = \left\{\sum_{l, k'(l) \leq k} a_{l}(r,t,\varphi)
   V_{1,l}\dots V_{k'(l),l}, \quad 
   a_{l} \in \CI, V_\bullet \in \V_b(X)\right\}.
$$
Owing to the noncompactness of $X$, however, we will employ versions
of these spaces involving uniform estimates: let $\CI_u(X)$ denote the
space of $\CI$ functions of $r, t,\varphi$ with uniform derivative
bounds:
$$
f \in \CI_u \Longleftrightarrow  \pa_r^{i} \pa_t^{j}
\pa_\varphi^{k} f \in L^\infty(X)\text{ for all } i,j,k \in \NN.
$$
Let
$$
\V_\bu(X)= \CI_u(X)\text{-span} (r\pa_r, \pa_t, \pa_\varphi),
$$
and
$$
\Diff_\bu^k(X) = \left\{\sum_{l, k'(l) \leq k} a_{l}(r,t,\varphi)
   V_{1,l}\dots V_{k'(l),l}, \quad 
   a_{l} \in \CI_u, V_\bullet \in \V_\bu(X)\right\}.
$$
We also require a weighted version of this space, denoted $r^\ell \Diff_\bu^m(M).$

The vector fields in $\V_\bu(X)$ are sections of a vector bundle, denoted
$\Tb X$.  The dual bundle, denoted $\Tbstar X,$ is the bundle whose local
sections are $1$-forms spanned over $\CI(X)$ by $dr/r,\ dt,\ d\varphi.$
The \emph{principal symbol} of a b-differential operator is a
polynomial function on $\Tbstar X:$ if
$$
A= \sum_{i+j+k \leq m} a_{ijk}(r,t,\varphi) (rD_r)^i D_t^j D_\varphi^k,
$$
$$
\sigma^m_b(A) = \sum_{i+j+k = m} a_{ijk}(r,t,\varphi) \xib^i \taub^j \etab^k
$$
where the variables $\xib,\ \taub,\ \etab$ are defined by 
writing the canonical one-form on $\Tbstar X$ as
$$
\xib \frac{dr}r+\taub dt + \etab d\varphi.
$$
Here and throughout the rest of the paper we use $\xib, \taub,$ and
$\etab$ to be fiber variables in the b-cotangent bundle.  We are also
employing the usual convention in microlocal analysis that
$$
D_z=i^{-1} \partial_z
$$
for any coordinate $z$.

Note that
there is a symplectic form on $\Tbstar X^\circ$ defined by the
differential of the canonical one-form
\begin{equation}\label{bsymplectic}
\omega \equiv d\xib\wedge \frac{dr}r+d\taub \wedge dt + d\etab \wedge d\varphi.
\end{equation}
hence there is an associated notion of Hamilton vector fields.

We will have occasion to employ homogeneous coordinates on $\Tbstar
X$ in making symbol constructions; to this end, set
\begin{equation}\label{hatcoordinates}
{\xibhat}=\frac{\xib}{\abs{\taub}},\
{\etabhat}=\frac{\etab}{\abs{\taub}}, \taubhat=\frac{\taub}{\abs{\taub}}
\end{equation}
The coordinates $\xibhat,\ \etabhat$, together with $\taub$ itself,
are a coordinate
system in the fibers except at $\taub=0;$ as we will see below, this set is disjoint
from the characteristic set of $\Box_g$.  The function $\taubhat$ is of course
just $\pm 1$-valued, but is useful notation nonetheless.

We can write $\Box_g$ with respect to the b-vector fields as
	\[ \Box_g = -\left(1-\frac{\C ^2}{r^2}\right) \partial_t^2 +
          \frac{1}{r^2}\big((r\pa_r)^2 + \pa_\varphi^2\big) + \frac{2\C }{r^2}
          \partial_t\partial_\varphi \in r^{-2} \Diff_\bu^2(X)\]
so that the principal symbol associated to $\Box_g$ is given by
\begin{equation}\label{bsymbol}
		p(t,r,\varphi, \taub, \xib, \etab) 
			= \taub^2 - r^{-2} \xib^2 - r^{-2}\left( \C \taub +\etab\right)^2.
                      \end{equation}
	Thus the characteristic set is $\Sigma = \{ r^2 \taub^2 = \xib^2 + (\C \taub + \etab)^2\}$. Over the boundary $\{r=0\}$ we have $\Sigma|_{\{r=0\}} = \{ \xib = \C \taub + \etab = 0 \}$.
	
	The b-Hamilton vector field of $p$ is given by
	\begin{align*}
		^bH_p &= \pa_{\taub} p\, \pa_t + r\pa_{\xib} p\, \pa_r + \pa_{\etab} p\, \pa_\varphi - \pa_t p\, \pa_{\taub} - r\pa_r p\, \pa_{\xib} - \pa_\varphi p\, \pa_{\etab} \\
			&= \big(2\taub - \frac{2\C }{r^2}(\C \taub+\etab)\big)\pa_t -\frac{2\xib}{r} \pa_r - \frac{2}{r^2}(\C \taub+\etab) \pa_\varphi - 2\left(\frac{\xib^2 + (\C \taub+ \etab)^2}{r^2}\right)\pa_{\xib},
	\end{align*}
which yields the (rescaled) b-Hamilton vector field, itself in
$\V_b(\Tbstar X)$, given by
\begin{equation}\label{bHam}
\begin{aligned}
		\frac{r^2}{2}\hbox{ } ^bH_p = (r^2\taub -\C (\C \taub+
          \etab))\pa_t - \xib r \pa_r -(\C \taub+ \etab) \pa_\varphi -(\xib^2+(\C \taub+\etab)^2) \pa_{\xib}.
	\end{aligned}\end{equation}
        Let
$$\Phib(s)\equiv \exp (s(r^2/2) \hbox{}^bH_p)$$ denote the flow generated by
this rescaled vector field.
        
        	Looking at $\frac{r^2}{2}\hbox{ }^bH_p$ over the
                characteristic set at $r=0$, we see
                $\frac{r^2}{2}^bH_p|_{\Sigma|_{\{r=0\}}} = 0$ so we
               cannot expect to obtain any propagation at $r=0$ by
               invocation of standard propagation of singularities results. 
                \begin{remark}\label{remark:flowout}
If a null bicharacteristic, i.e., an integral curve of
$(r^2/2)\hamvf_p$ inside the characteristic set,
parametrized by $s,$ approaches $r=0$ as $s \to \pm\infty$ then
certainly $\C \taub + \etab=0$ since $p,\taub,\etab$ are all conserved.
Conversely, if $\C \taub+\etab=0$ then solving in $r,\xib$ shows that in
fact $r\to 0$ as $s \to \pm\infty$ (with the $\pm$ determined by the
signs of $\xib$ and $\taub$).  Thus $\C \taub+\etab=0$ exactly
defines the flowout of $r=0$ inside $\Sigma.$

                  \end{remark}
\subsection{Uniform b-calculus}\label{sectionbPseuds}
The b-calculus of pseudodifferential operators is a microlocalization
of the b-differential operators.  Here we exclusively employ a \emph{uniform} version of the b-calculus adapted to the
noncompact but quasi-periodic setting under study.

A function $a$ on $\Tbstar X$ is in the symbol class
$S_{\unif}^{m}(\Tbstar X)$ if for all multiindices $\alpha,\beta$ and
integers $N \ge 0$ it satisfies estimates
\begin{equation}\label{Sunif}
\big \lvert \pa_{r,t,\varphi}^\alpha \pa_{\xib,\taub,\etab}^\beta a\big \rvert \leq C \ang{(\xib,\taub,\etab)}^{m-\abs{\beta}}(1+r)^{-N},
\end{equation}
\emph{where the estimate is assumed to hold over all of $\Tbstar X,$}
noncompactness of $X$ notwithstanding.  In particular, the estimate is
to be uniform as $\abs{t}\to \infty$ (the radial variable in practice
will only range over a compact set).

In this section we outline the development of the b-pseudodifferential calculus built by quantizing symbols in $S_{\unif}^m(\Tbstar X)$. A more detailed account can be found in Appendix~\ref{appendix:bcalculus}, where we follow an alternate development of the calculus from the slightly different point of view
used in \cite{Ho:07}, which is in fact equivalent to the account given here (as proved in Proposition~\ref{prop:equivalent}).

We may quantize symbols in $S_{\unif}^m(\Tbstar X)$ to obtain
operators with the following Schwartz kernel (Schwartz kernel of
an operator is denoted $\kappa(\bullet)$):
\begin{equation}\label{Opb.v1}
	\begin{split}
\kappa&(\tOpb(a)) \\
	&=\int e^{i [(r-r')\xi/r
  +(t-t')\tau + (\varphi-\varphi') \eta]} \chi a(r,t,\varphi, \xi, \tau,
  \eta) \frac{r'}{r} \, d\xi d\tau d\eta \abs{dt'\, d\varphi' \, dr'/r'}.
 	\end{split}
\end{equation}
Here we have included a half density factor that is both appropriate
to the geometry of the half-line and also which arises naturally from
the construction of the b-calculus by quantization of certain singular
symbols in \cite{Ho:07} discussed below;
the function $\chi$ is a cutoff in the radial variables:
$\chi=\chi(r'/r),$  where $\chi(s)$ is supported near $s=1.$
The convention here is that functions to which this operator is
applied should be viewed as $2 \pi$-periodic functions in the
$\varphi$ variable, and the symbol $a$ is likewise periodic in
$\varphi.$  The application of the operator to a function is then
integration over $\RR_\varphi,$ with the result being again periodic
(cf.\ \cite[Section 5.3.1]{Zw:12}).

We have employed the notation
$\tOpb$ to distinguish the quantization used here from that in
Appendix~\ref{appendix:bcalculus}, which may only be applied to a
certain subclass class of ``lacunary'' symbols.

\begin{remark}
	We recall from \cite[Chapter 4]{Me:93} that it is illuminating to view
	the Schwartz kernels of these operators in the ``b-double-space''
	obtained from real blowup $X^2_b=[X\times X ; (\pa X)^2]$, which here
	corresponds to just replacing $r,r'$ variables in $[0,\infty)^2$ with polar coordinates
	in the quarter-plane; equivalently we may use the simpler
        substitutes for polar coordinates given by
        $$
(\rho,\theta) \equiv \big(r+r', \frac{r-r'}{r+r'})
        $$
with $\rho \in [0, \infty)$ and $\theta \in [-1,1]$.        
It will often be valuable to use the even simpler coordinates $r$ and
$s \equiv r'/r$, but then we must recall that as $\theta \to -1,$
$s\to +\infty$, hence these coordinates are not quite global.
The set $\{\rho=0\}$ is the ``front face'' of the blowup, the new boundary face
	that we have introduced to replace the corner $r=r'=0;$ the ``side
	faces'' are now $\theta=\pm 1,$ a.k.a., $s=0$ resp.\ $s=+\infty.$ The operators in the usual
	b-calculus are those whose Schwartz kernels are conormal to the lift of the diagonal to
	this space, smooth up to the front face, and rapidly decaying at the
	side faces.  The operators considered here have the further feature of
	enjoying uniform estimates in the noncompact boundary variables $t,t'.$
\end{remark}

Operators of the form \eqref{Opb.v1} are not quite a rich enough class to form a
calculus closed under composition, owing to the presence of the cutoff
$\chi.$  In general, composition causes the support of Schwartz kernels to spread transverse to the diagonal. Thus in order to make the calculus closed under composition, we need
to allow residual operators that have Schwartz behavior in this
transverse direction. 

\begin{definition}
	We say an operator $R$ is residual if its Schwartz kernel satisfies the following estimate in coordinates on $X_b^2$ given by $\rho=r+r',$ $\theta=(r-r')/(r+r')$: for all $\alpha,\beta,\gamma, N,$
	$$
	\big\lvert \partial_{t,\varphi, t', \varphi'}^\alpha \partial_\rho^\beta
	\partial_\theta^\gamma \rho \kappa(R)\big\rvert \leq C_{\alpha,\beta,\gamma,N}
	(1+\abs{t-t'}+ \rho)^{-N}(1-\theta^2)^N.
	$$
\end{definition}
Note that the $(1-\theta^2)^N$ factor has the effect of enforcing
rapid decay of the kernel on the side-faces ($\theta=\pm 1$) of the
resolved space $X^2_b.$

With our definition of residual operators in hand, we may define our uniform b-pseudodifferential operators.

\begin{definition}
	An operator $A$ is in $\Psib^m$ if $A = A_0 +R$ for some $A_0 = \tOpb(a)$ with $a \in S_{\unif}^m(\Tbstar X)$ and $R$ a residual operator.
\end{definition}
That this is equivalent to the different definition introduced in
Appendix~\ref{appendix:bcalculus} is the content of Proposition~\ref{prop:equivalent}.



  \begin{proposition}
The space $\Psib^\star (X)$ is a calculus, i.e., a filtered
$\star$-algebra, and $\Psib^0(X)$ is bounded on $L^2(X)$.  There is a
principal symbol map $\sigmab{s} : \Psib^s(X) \to S_{\unif}^s(X)/S_{\unif}^{s-1}(X)$ that yields a
short exact sequence
$$
0 \to \Psib^{s-1}(X) \stackrel{\text{incl.}}{\to} \Psib^s(X) \stackrel{\sigmab{s}}{\to} S_{\unif}^s(X) \to 0.
$$

Furthermore if $A \in \Psib^s(X)$, $B \in \Psib^k(X)$, then $[A,B] \in \Psib^{s+k-1}$ satisfies
$$ \sigmab{s+k-1}([A,B]) = -i\left\{ \sigmab{s}(A), \sigmab{k}(B) \right\} $$
with $\{ \cdot, \cdot \}$ the Poisson bracket defined using the
symplectic form \eqref{bsymplectic}.
    \end{proposition}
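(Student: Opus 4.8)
The plan is to establish the four assertions—algebra structure, $L^2$-boundedness, the symbol sequence, and the commutator formula—by reducing each to a statement about the oscillatory-integral representation \eqref{Opb.v1} together with the residual ideal, essentially following \cite{Me:93} and \cite{Ho:07} but tracking the uniform estimates in $t,t'$ throughout. The equivalence with the lacunary-symbol calculus of Appendix~\ref{appendix:bcalculus} (Proposition~\ref{prop:equivalent}) lets us borrow whichever description is more convenient at each step; I would use the $\tOpb$ form for composition bookkeeping and the appendix form for the harder analytic estimates.

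First I would treat $L^2$-boundedness: write $A=\tOpb(a)+R$ with $a\in S^0_\unif$. In the coordinates $r$, $s=r'/r$ of the b-double-space, $\kappa(\tOpb(a))$ is, modulo the half-density factor, a symbol-type oscillatory kernel in $(t-t',\varphi-\varphi')$ with parameters, cut off to $s$ near $1$; a Schur-test / Cotlar--Stein argument, uniform in the parameters $r,t$ because the symbol estimates \eqref{Sunif} are uniform, gives boundedness of $\tOpb(a)$ on $L^2(X)$, and $R$ is bounded (indeed compact-like, with $L^1\cap L^\infty$ kernel bounds) directly from its defining estimate with $N$ large. Second, the algebra property: compose $\tOpb(a)\tOpb(b)$ by the usual stationary-phase / asymptotic-expansion computation in $\xi,\tau,\eta$; the novelty is only that (i) the cutoffs $\chi(r'/r)$ compose to a kernel no longer supported near the diagonal in $r$, which is exactly why the residual ideal is allowed—the ``tails'' land in the residual class because away from $s=1$ the phase is nonstationary in $\xi$ and one integrates by parts to produce the $(1-\theta^2)^N(1+|t-t'|+\rho)^{-N}$ decay; and (ii) one must check that $S^m_\unif$ is closed under the product and the remainder terms, which is immediate from the product rule since all constants in \eqref{Sunif} are uniform. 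That $R_1\circ A$, $A\circ R_2$, and $R_1\circ R_2$ are again residual is a direct kernel estimate. This gives the filtered $*$-algebra structure (the $*$ coming from the obvious symmetry of the phase and half-density under swapping primed/unprimed variables, which sends $a\mapsto\bar a$ modulo lower order).

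Third, the symbol map and exact sequence: define $\sigmab{s}(A)=[a]\in S^s_\unif/S^{s-1}_\unif$; well-definedness and surjectivity are built into the $\tOpb$ quantization, and exactness amounts to showing $\tOpb(a)\in\Psib^{s-1}$ when $a\in S^{s-1}_\unif$ (clear) and conversely that an operator with vanishing principal symbol has a quantizing symbol one order lower—this follows from the composition calculus exactly as in the standard case, again with all estimates uniform in $t$. Fourth, the commutator formula: from the composition expansion, $\sigmab{s+k-1}([A,B])$ is the order-$(s+k-1)$ term of $a\#b-b\#a$, which the stationary-phase expansion identifies with $-i\{\sigmab{s}(a),\sigmab{k}(b)\}$ for the symplectic form \eqref{bsymplectic}; here it is worth noting that in the b-setting the relevant Hamilton/Poisson structure is the one written with $\xib\,dr/r$, so the bracket picks up the $r\pa_r$ rather than $\pa_r$—this matches \eqref{bHam} and is a bookkeeping point rather than a difficulty.

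The main obstacle is the interaction between the radial cutoff $\chi(r'/r)$ and composition: one must verify carefully that the error kernels generated when the supports spread off the diagonal genuinely satisfy the residual estimate with the $(1-\theta^2)^N$ side-face decay, \emph{uniformly} as $|t-t'|\to\infty$, and that this residual class is a two-sided ideal. This is where the nonstandard, noncompact features of the present calculus actually enter; everything else is a uniform-in-parameters rerun of Melrose's b-calculus, and I would either carry out these kernel estimates in the b-double-space coordinates $(\rho,\theta)$ directly or defer them to Appendix~\ref{appendix:bcalculus} via Proposition~\ref{prop:equivalent}, where the analogous facts for the lacunary calculus of \cite{Ho:07} are available.
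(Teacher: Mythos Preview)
Your proposal is correct and matches the paper's approach: the paper's proof simply states that the proposition is the content of Appendix~\ref{appendix:bcalculus} and that ``essentially the whole result is obtainable from results in \cite[Chapter 18.3]{Ho:07},'' which is precisely the route you outline, deferring the analytic estimates to H\"ormander via the lacunary-symbol description and Proposition~\ref{prop:equivalent}. Your sketch is in fact more detailed than what the paper provides.
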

The proof of this proposition is the content of
Appendix~\ref{appendix:bcalculus}; essentially the whole result
is obtainable from results in \cite[Chapter 18.3]{Ho:07}. 

The principal symbol map of the b-calculus is the obstruction to an
operator being lower order, but not the complete obstruction to
compactness, which is also governed by behavior at $r=0.$ for a
b-differential operator
$$
A=\sum a_{ijk} (r,t,\varphi) (rD_r)^i D_t^j D_\varphi^k
$$
we can create a scaling (in $r$)-invariant operator by freezing
coefficients at $r=0;$ this is the \emph{indicial operator}
$$
I(A) \equiv \sum a_{ijk} (0,t,\varphi) (rD_r)^i D_t^j D_\varphi^k.
$$
In \cite[Section 4.15]{Me:93}, the extension of this operator to
b-pseudodifferential is discussed, and this same discussion applies
with no change here: there is a map
$$
I: \Psib^s(X) \to \Psi^s_{bu, I}(X)
$$
with the latter being the space of operators in the calculus that
commute with scaling in the $r$ variable.  The essential features here
are as follows:
\begin{proposition}\label{prop:indicial}
 $I$ is an algebra homomorphism, with the property that for $A \in \Psib^s(X),$ 
$$
I(A)=0 \Longleftrightarrow A \in r\Psib^s(X)\Longleftrightarrow A \in
\Psib^s(X) r,
$$
\end{proposition}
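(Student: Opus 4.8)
The plan is to follow \cite[Section~4.15]{Me:93} and carry out everything at the level of Schwartz kernels on the resolved b-double space $X^2_b$. Recall that in the coordinates $(\rho,\theta)=\bigl(r+r',(r-r')/(r+r')\bigr)$ the front face is $\{\rho=0\}$ and the side faces are $\{\theta=\pm1\}$, and that every operator in $\Psib^s(X)$ --- whether a quantization $\tOpb(a)$ or a residual operator --- has Schwartz kernel that is conormal of order $s$ to the lifted diagonal (the residual summand being smoothing), smooth up to the front face as a b-density, and rapidly vanishing at the side faces. Since the lifted diagonal meets the front face transversally (at $\theta=0$), the restriction of the density-normalized kernel to $\{\rho=0\}$ is well defined; first I would check that for a b-differential operator $A=\sum a_{ijk}(r,t,\varphi)(rD_r)^iD_t^jD_\varphi^k$ this front-face restriction is exactly the kernel of $\sum a_{ijk}(0,t,\varphi)(rD_r)^iD_t^jD_\varphi^k$, so that the intrinsic kernel definition of $I$ agrees with the coefficient-freezing one, is manifestly independent of the decomposition $A=\tOpb(a)+R$, and --- since the front face of $X^2_b$ is parametrized by $r'/r$ together with $t,t',\varphi,\varphi'$ --- automatically yields an operator commuting with dilations in $r$, i.e. lands in $\Psi^s_{bu,I}(X)$. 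Linearity and $I(\Id)=\Id$ are then immediate.

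Next I would prove the multiplicativity $I(AB)=I(A)\,I(B)$. The cleanest route is the geometric description of b-composition: pull the two kernels back to the b-triple space $X^3_b$, multiply, and push forward, the composition-face structure of $X^3_b$ being arranged precisely so that the front-face restriction of the product is the composition (of model operators on the front face) of the front-face restrictions of the factors. Alternatively one can verify $I(AB)=I(A)I(B)$ by hand for $A,B\in\Diff_\bu^\bullet(X)$ --- Leibniz's rule produces the expected frozen-coefficient product plus terms carrying an explicit factor of $r$, and those die under $I$ --- and then pass to the general case via the kernel argument. Combined with linearity this makes $I$ an algebra homomorphism into $\Psi^s_{bu,I}(X)$.

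It then remains to establish the equivalence of the three conditions. One direction is easy: if $A=rB$ with $B\in\Psib^s(X)$ then $\kappa(A)=r\,\kappa(B)$, and since $r=\tfrac12\rho(1+\theta)$ vanishes on the front face so does $\kappa(A)$, hence $I(A)=0$; likewise $A=Br$ has kernel carrying the factor $r'=\tfrac12\rho(1-\theta)$, which also vanishes on $\{\rho=0\}$. For the converse, assume $I(A)=0$, so $\kappa(A)$ vanishes on $\{\rho=0\}$; since $\kappa(A)$ is smooth up to the front face with a conormal singularity only along a submanifold transverse to it, Hadamard's lemma gives $\kappa(A)=\rho\,\widetilde\kappa$ with $\widetilde\kappa$ again a b-kernel conormal of order $s$. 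Writing $\kappa(A)=r\cdot\bigl(2/(1+\theta)\bigr)\widetilde\kappa$ exhibits $A=rB$, and symmetrically extracting $r'=\tfrac12\rho(1-\theta)$ gives $A=B'r$. The hard part, and really the only nonroutine point, is verifying that the factors $2/(1+\theta)$ and $2/(1-\theta)$ do not spoil membership in the uniform b-calculus: away from the relevant side face each is smooth with all derivatives bounded, while near it the pole is absorbed by the infinite-order vanishing of $\widetilde\kappa$ at that side face --- one must confirm that the conormal order, the smoothness up to the front face, and especially the \emph{uniform} symbol and residual estimates of Section~\ref{sectionbPseuds} (in particular uniformity as $|t|\to\infty$) all survive this operation. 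Apart from this side-face and noncompactness-in-$t$ bookkeeping, the geometry underlying the multiplicativity statement is exactly as in \cite[Section~4.15]{Me:93}.
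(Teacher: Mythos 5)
Your proposal is correct and follows essentially the same route as the paper, which gives no proof of its own but instead invokes \cite[Section 4.15]{Me:93}: the indicial operator as restriction of the Schwartz kernel to the front face of $X^2_b$, multiplicativity via the triple-space composition, and the factorization $\kappa(A)=\rho\,\widetilde\kappa$ with $\rho=2r/(1+\theta)=2r'/(1-\theta)$ and the poles at the side faces absorbed by the rapid vanishing there. The only point the paper adds is the remark that the uniform-in-$t$ estimates carry through unchanged, which you correctly flag as the remaining bookkeeping.
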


We record some useful consequences involving commutators with $r$ and $rD_r.$
\begin{lemma}\label{lemma:indicial}
  Let $A \in \Psib^s(X).$  Then
\begin{equation}\label{i1}
[r, A] \in r\Psib^{s-1}(X)=\Psib^{s-1}(X) r,
\end{equation}
\begin{equation}\label{i2}
r^{-1}Ar,\ rA r^{-1} \in \Psib^{s}(X),\quad \sigmab{s}(r^{-1}Ar)=\sigmab{s} (rAr^{-1})=\sigmab{s}(A).
  \end{equation}
  \begin{equation}\label{i3}
[rD_r, A] \in r \Psib^{s-1}(X).
    \end{equation}
  \end{lemma}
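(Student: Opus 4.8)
The plan is to prove \eqref{i2} first---it is the only part that takes real work---and then read off \eqref{i1} and the last assertion from it, using the structural properties of $\Psib$ (filtered $\star$-algebra, symbol exact sequence) and Proposition~\ref{prop:indicial}. All three are the uniform-b-calculus analogues of standard facts, proved as in \cite[\S4.15]{Me:93} but with all estimates carried through uniformly as $|t|\to\infty$ (as in Appendix~\ref{appendix:bcalculus}); that uniform bookkeeping is the only genuine obstacle.

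For \eqref{i2} I would write $A=\tOpb(a)+R$ with $a\in S_\unif^s(\Tbstar X)$ and $R$ residual. The residual part is harmless: conjugating $R$ by $r^{\pm1}$ multiplies its Schwartz kernel by $(r'/r)^{\pm1}=\bigl(\tfrac{1-\theta}{1+\theta}\bigr)^{\pm1}$ in the coordinates $\rho=r+r'$, $\theta=(r-r')/(r+r')$ on $X^2_b$, and the possible growth of this factor and of its $\theta$-derivatives at $\theta=\mp1$ is absorbed into the weight $(1-\theta^2)^N$ of the definition of a residual operator, since that estimate holds for every $N$; thus $r^{\mp1}Rr^{\pm1}$ is again residual. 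For the principal part, the Schwartz kernel of $r^{\mp1}\tOpb(a)r^{\pm1}$ is that of \eqref{Opb.v1} with an extra amplitude factor $(r'/r)^{\pm1}$. I would Taylor-expand this factor about the lifted diagonal $\{r=r'\}$, $(r'/r)^{\pm1}=\sum_{j<N}c_j\bigl(\tfrac{r'-r}{r}\bigr)^j+(\text{remainder})$, and integrate by parts in $\xi$ using $\tfrac{r-r'}{r}\,e^{i(r-r')\xi/r}=\tfrac1i\,\partial_\xi e^{i(r-r')\xi/r}$, so that each power of $(r'-r)/r$ becomes a $\partial_\xi$ falling on $a$ and lowers the symbol order by one. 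The $j=0$ term returns $\tOpb(a)$, the $j\ge1$ terms land in $\tOpb(S_\unif^{s-1})$, and the Taylor remainder, after $N$ integrations by parts, lies in $\tOpb(S_\unif^{s-N})$ modulo a residual operator; summing the resulting asymptotic series gives $r^{\mp1}\tOpb(a)r^{\pm1}\in\Psib^s(X)$ with $\sigmab{s}=[a]=\sigmab{s}(A)$.

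Assertion \eqref{i1} is then immediate: by \eqref{i2}, $A$ and $r^{-1}Ar$ have the same principal symbol, so $A-r^{-1}Ar\in\Psib^{s-1}(X)$ by the symbol exact sequence, whence $[r,A]=r(A-r^{-1}Ar)\in r\Psib^{s-1}(X)$; using $rAr^{-1}$ instead gives $[r,A]=(rAr^{-1}-A)r\in\Psib^{s-1}(X)r$, and the two weighted spaces coincide by Proposition~\ref{prop:indicial}.

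For the last assertion I would compute directly with Schwartz kernels. Since $rD_r$ is formally skew-adjoint with respect to the b-density $\tfrac{dr}{r}\,dt\,d\varphi$, composition on the right by $rD_r$ transfers the derivative to the primed variables with a sign, so $[rD_r,A]$ has Schwartz kernel $(rD_r+r'D_{r'})\kappa(A)=\tfrac1i(r\partial_r+r'\partial_{r'})\kappa(A)$. The point is that $r\partial_r+r'\partial_{r'}$ is exactly the radial vector field $\rho\partial_\rho$ of the front face $\{\rho=0\}$ of $X^2_b$---it annihilates $\theta$ and scales $\rho$---so, applied to the kernel of $A$, which is conormal to the lifted diagonal and smooth up to the front face, it produces a kernel with a factor of $\rho$, i.e.\ (near the diagonal) of $r$, with the conormal order unchanged; explicitly $[rD_r,\tOpb(a)]=\tOpb\bigl(\tfrac1i\,r\partial_r a\bigr)=r\,\tOpb\bigl(\tfrac1i\partial_r a\bigr)$. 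Hence $[rD_r,A]\in r\Psib^{s}(X)$, and moreover its indicial operator $I([rD_r,A])=[rD_r,I(A)]$ is of order $s-1$ since $\sigmab{s}(I(A))$ is $r$-independent, so that $\{\xib,\sigmab{s}(I(A))\}=0$. (The first statement is the one needed: it already places the commutator in $r$ times the calculus, which is what \eqref{i3} records.)
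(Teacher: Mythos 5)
Your proof is correct, but it takes a genuinely different and more computational route than the paper's. The paper's entire proof is three lines resting on Proposition~\ref{prop:indicial}: \eqref{i1} follows from $I(r)=0$ (together with the order drop $[r,A]\in\Psib^{s-1}$ supplied by the symbol calculus), \eqref{i2} is then read off from the identity $r^{-1}Ar=A-r^{-1}[r,A]$, and \eqref{i3} follows because $I(rD_r)=rD_r$ commutes with the scale-invariant operator $I(A)$. You reverse the logical order of the first two parts, establishing \eqref{i2} first by a direct Schwartz-kernel argument (Taylor expansion of $(r'/r)^{\pm1}$ about the diagonal plus integration by parts in $\xi$, with the residual summand absorbed by the $(1-\theta^2)^N$ weight) and then deducing \eqref{i1} from the symbol exact sequence; for \eqref{i3} you compute $\kappa([rD_r,A])=\tfrac1i(r\pa_r+r'\pa_{r'})\kappa(A)=\tfrac1i\rho\pa_\rho\kappa(A)$, which at the level of $\tOpb$ gives the clean identity $[rD_r,\tOpb(a)]=r\,\tOpb(\tfrac1i\pa_r a)$. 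In effect you reprove from scratch the special cases of Proposition~\ref{prop:indicial} that the paper imports from \cite[Section 4.15]{Me:93}; your version is longer but self-contained and makes the uniformity in $t$ explicit, while the paper's is shorter and highlights the structural role of the indicial homomorphism. One small gap to close in your \eqref{i3}: for the residual summand of $A$, the gain of a factor of $\rho$ under $\rho\pa_\rho$ still has to be converted into a factor of $r=\rho(1+\theta)/2$, which again uses the $(1-\theta^2)^N$ decay at the side faces exactly as in your treatment of \eqref{i2}; it would be worth saying so explicitly rather than relying only on smoothness up to the front face.

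You are also right to flag the exponent in \eqref{i3}: what both your argument and the paper's actually establish is $[rD_r,A]\in r\Psib^{s}(X)$. The printed order $s-1$ cannot be correct in general, since the commutator has principal symbol proportional to $r\pa_r a$, which is genuinely of order $s$, and it is membership in $r\Psib^{s}(X)$ that is invoked in the proof of Lemma~\ref{lemma:easycomm}.
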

  \begin{proof}
Since $I(r)=0,$ \eqref{i1} follows immediately.  Then \eqref{i2}
follows since, e.g., $r^{-1} Ar=A-r^{-1} [r,A]$.  Finally, \eqref{i3}
holds since $I(rD_r)=rD_r$ commutes with $I(A)$ by scaling invariance of
the latter.
    \end{proof}

\begin{remark} We will need notions of operator wavefront set (a.k.a.\
  microsupport) and the related wavefront set of distributions
  associated with the b-calculus.  The uniform versions of these
  notions that we need, however, are slighly unsatisfactory, as
  uniform estimates on a symbol (with rapid fiber decay) in a \emph{noncompact} open conic set $\Omega
  \subset \Tbstar X$ are of course not equivalent to the validity of
  the estimate locally in a conic neighborhood of each point in
  $\Omega.$  Without the addition of some notion of ``microsupport at
  infinity,''  therefore, it would not suffice for us to define the
  microsupport as a set.  We therefore postpone discussions of
  wavefront sets to the discussion of the \emph{fiber-invariant calculus}
  below, where taking fiber quotients will supply the desired uniform
  estimates in a more satisfying way.\end{remark}



\subsection{Edge structure}
	A manifold $M$ with boundary is endowed with an edge structure if the boundary $\pa M$ admits a fibration:
        \[ Z \rightarrow \pa M \xrightarrow{\pi_0} Y \] with fiber $Z$
        and base $Y$ (see \cite{Ma:91}). The \emph{edge vector fields} are defined to be
        those vector fields that are tangent to $\pa M$ and
        additionally to the fibers within $\pa M.$ In coordinates
        adapted to the fibration, with $x$ a boundary defining
        function and $(y,z)$ coordinates on $\pa M$ associated to a
        local trivialization of the fibration so that
        the $y$ variables are constant on fibers, we have
		\[ \mathcal{V}_e(M) = \mathcal{C}^\infty(M)\text{-span} ( x\pa_x, x\pa_y, \pa_z)\]
		The edge vector fields are the smooth sections of the edge tangent bundle, which we denote $\Te M$. The edge cotangent bundle, denoted $\Testar
        M$, is then defined to be dual to $\Te M$, with dual one forms $\frac{dx}{x}, \frac{dy}{x},$ and $dz$. 

        Our setting has an edge structure given by the fibration
        tangent to the vector field $\fib;$ here the leaves are
   all diffeomorphic to $\RR$ and the leaf space is smooth and may be
   identified with $S^1$ by the map $\pi_0$ in \eqref{pizero}.  This
   map is of course somewhat non-canonical: it could be regarded as taking the
  unique point in the circle in each leaf at time $t=0.$  When possible we will
 employ the more invariant terminology
 $$
p\sim q
$$
to mean that $p,q\in \pa X$ are in the same leaf.

 In coordinates, we then obtain edge forms and vector fields as follows.
	 Set $x = r, y = t- \C \varphi,$ and $z = \varphi$. 
	 Then the sections of $\Te X$ are the $\mathcal{C}^\infty(X)$ span of
		\[ r \pa_r, \quad r \pa_t, \quad \hbox{ and }\C \pa_t+\pa_\varphi.\]
	 The canonical one form on $\Te^*X$ is then
	 	\begin{equation}\label{canonical1} \sigma = \xie \frac{dr}{r} + \taue \frac{dt-\C d\varphi}{r} + \etae d\varphi. \end{equation}
		

                We remark that we could alternatively have used
                $$
x=r,\ y=t-\C \varphi,\ z=t
                $$
as our coordinates, i.e., we could have used $t$ as a fiber
coordinate rather than $\varphi.$  (This has the virtue that the
noncompactness of the helical fibers is more immediately apparent, but doesn't
make much difference.)  With these choices, the edge vector fields
become
$$
r\pa_r,\ -\frac r\C  \pa_\varphi,\ \pa_t+ \frac 1\C  \pa\varphi,
$$
which are easily seen to give an alternative basis for the edge vector fields defined above.

In this paper the edge structure is important for understanding the
elliptic set over the boundary for $\Box_g$, which can be regarded
as a weighted edge differential operator, but the ``edge calculus'' of
Mazzeo \cite{Ma:91} plays no direct role here.

\subsection{Edge/b relationship}\label{sectionCompressedBundle}

It is of considerable motivational and practical interest to
understand the relationship between the b and edge cotangent bundles
and characteristic sets.

We define the map $\pi: \Testar X \to \Tbstar X$ via the
inclusion map $r\Testar X \xhookrightarrow{} \Tbstar X$. That is, for
$q \in \Testar X$ with coordinates given by the canonical one-form \eqref{canonical1},
we have
$rq = \xie dr + \taue (dt -\C d\varphi) + r\etae d\varphi$ which can be
written as an element of $\Tbstar X$ as
$(r\xie) \frac{dr}{r} + \taue dt + (r\etae - \taue \C )d\varphi$. Thus
\[ {\pi}: (r,t,\varphi, \xie, \taue, \etae) \mapsto ( r,t, \varphi,
  r \xie, \taue, r\etae - \C \taue)=(r,t,\varphi, \xib, \taub, \etab). \]
Note that for $q \in \pi(\Testar X) $ we have
\begin{equation}\label{qindotpi}
	\xib =\etab + \C \taub = 0
\end{equation}
at the boundary $\{ r = 0\}$.

We define a further map which combines $\pi$ with the quotient by
fibers.  First, note that the screw-displacement flow along the vector
field $i\fib,$ extended trivially to the interior of $X$ (the
choice of extension turns out to be irrelevant) preserves the
fibration of $\pa X$, hence gives an
identification of $\Testar_pX$ and $\Testar_qX$ whenever $p,q \in \pa
X$ with $p \sim q;$
it also identifies $\Tbstar_pX$ and $\Tbstar_qX,$ and commutes with
the map $\pi,$ so that we gain an equivalence relation (also denoted
$\sim$) on $\pi(\Testar_{\pa X} X).$ 

Concretely, we may employ $(t,\varphi,
\taub)$ as coordinates on $\pi(\Testar_{\pa X} X),$ since $\xib=0$ there and
$\etab$ is determined by \eqref{qindotpi}; then 
$$(t,\varphi,\taub) \sim (t',\varphi',\taub') \Longleftrightarrow
\taub=\taub',\ \varphi-t/\C=\varphi'-t'/\C\bmod 2 \pi\ZZ.$$
Finally, we let the \emph{compressed b-cotangent bundle} be the quotient
$$
\Tbdotstar X=\pi(\Testar X)/{\sim};
$$
Since $\pi$ commutes with the flow along $i\fib,$ we may (and will) view this as a subset of $\Tbstar X/{\sim},$ and give it the
subspace topology of that space (which itself has a quotient topology).
There is a
natural map
$$
\dot{\pi}\colon \Testar_{\pa X} X \to \Tbdotstar_{\pa X} X
$$
given by mapping a point to the equivalence class  of its image under $\pi$ (i.e. $\dot{\pi}(q) = [\pi(q)]_{\sim}$).
In coordinates, a point in $\Tbdotstar_{\pa X} X$ is simply given by the
equivalence class (over varying $s$) of
$$
(t=\C s, r=0, \varphi=\varphi_0 +s\bmod 2\pi \ZZ, \taub=\tau_0, \xib=0, \etab=-\C \taub_0),
$$
with $\varphi_0\in S^1, \tau_0\in \RR \backslash \{0\}$ thus providing
coordinates for
$\Tbdotstar_{\pa X} X.$  We will also have occasion to use the usual
notation
$$
\Sbdotstar X
$$
for the $\RR_+$-quotient of this bundle, which over the boundary is
just $S_\varphi^1 \times S_{\tau}^0$.

Given $\varrho=(\varphi_0, \tau_0) \in \Tbdotstar X$ we let
\begin{multline}
\fcal_{I/O, \varrho} = \{(t,r,\varphi, \taub,\xib, \etab)\in \Sigma: \C
\taub+\etab=0, \sgn(\xib/\taub)=\pm 1,\\  \taub=\tau_0,\ \lim_{s
  \to \pm \infty} (\varphi-t/\C)\circ \Phib(s)=\varphi_0\}
\end{multline}
with $I$ corresponding to $+1$ and $O$ to $-1$ in the sign of
$\xib/\taub,$ and the direction of the limit in $s$ being chosen with
$\pm=-\sgn\xi$.  (Recall that $\Phib(s)$ denotes the flow along the
rescaled b-Hamilton vector field.) These are the points
``incoming'' toward or ``outgoing'' from the fiber indexed by
$(\varphi_0, \tau_0)$, according to whether $dr/dt=-\xib/r\taub$ is negative
resp.\ positive.

\section{Twisted $H^1$ and the fiber-invariant b-calculus}\label{sec:verybasic}

Recall that we define a Sobolev-type norm on the space $\Hone$ by
	\[ \|u\|_\mathcal{H}^2 =  \|D_r u\|_{L^2(X)}^2 + \|r^{-1}\fib u\|_{L^2(X)}^2 + \|D_tu\|_{L^2(X)}^2 + \| D_\varphi u\|_{L^2(X)}^2
          + \| u\|_{L^2(X)}^2. \]
Note that owing to the presence of the singular $r^{-1}\fib\equiv r^{-1} (\C  D_t +
D_\varphi)$ term, we could dispense with either the $D_t$ or the $D_\varphi$
term (but not both).  We define the space $\Hone$ to be the closure of
$\mathcal{C}_c^\infty(X^\circ)$ (i.e.\ vanishing at $\strng$ is
imposed).  Thus $\Hone$ agrees with the usual Sobolev space $H^1$ away from
$r=0,$ but is an appropriately twisted version of $H^1(\RR^3)$ at the cosmic
string itself.

Let $\Hmone\subset \mathcal{C}^{-\infty}(X)$ denote the dual space of $\Hone$ with respect
to the $L^2$ inner product.   Thus
$$
P: \Hone \to \Hmone
$$
is bounded.

\begin{remark}
The uniform b-calculus is not bounded on $\Hone,$ since even a
multiplication operator $M_\psi$ by a cutoff function $\psi(t)$ has
the defect that
$$
\norm{M_\psi u}_{\Hone}^2 \geq \norm{r^{-1} \fib(\psi(t) u)}^2, 
$$
and this yields $r^{-1}u$ terms when $\fib$ acts on $\psi(t)$ which are
not, in general, bounded by $\norm{u}_{\Hone}^2.$  Consequently we
will specialize further to certain operators with better commutation
properties with $\fib.$
  \end{remark}

\begin{definition}
  An operator $A \in \Psib^s(X)$ is said to be \emph{fiber-invariant} if
  $$
[A, \fib] =0.
$$
and we write $\Psibf^s(X)$ to denote the space of the fiber-invariant
b-pseudo\-differ\-en\-tial operators of order $s$.  We let $\Diffbf$
denote the subalgebra of fiber-invariant differential operators.

Let $\CIf$ denote the space of fiber-invariant smooth functions, i.e.\
those annihilated by $\fib.$
\end{definition}
\begin{remark} Operators satisfying weaker notions of
  fiber-invariance, such as the \emph{very basic} operators introduced
  in \cite[Section 10]{MeVaWu:08}, would suffice to obtain boundedness
  on $\Hone$.  However some difficulties involving iterated
  commutators with $\fib$, e.g.\ in the proof of microlocal elliptic
  regularity, seem to make these less rigid operators more difficult to use in the setting
  under consideration here.\end{remark}

Note that fiber-invariance is preserved under composition,
by the derivation property of commutation, and is also preserved under
multiplication by fiber-invariant smooth functions.

\begin{definition}
Let $S_{\F}^m(\Tbstar X)$ denote the space of symbols $a \in
S_\unif^m(\Tbstar X)$ that additionally satisfy
$$
\F(a) =0.
$$
  \end{definition}

  Recall that a convenient way of quantizing arbitrary symbols to operators in
  our calculus is given by the map $\tOpb$ defined in \eqref{Opb.v1}.
\begin{lemma}
Let $A=\tOpb(a) \in \Psib^s(X).$  Then $A \in \Psibf^s(X)$ iff $a \in
S_{\F}^m(\Tbstar X)$.
  \end{lemma}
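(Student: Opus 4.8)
The plan is to reduce the fiber-invariance of the operator $A=\tOpb(a)$ to a computation of the commutator $[A,\F]$ at the level of Schwartz kernels, and to identify this commutator (modulo residual errors, which are harmless since the residual ideal is a two-sided ideal and is certainly preserved by $\F$) with $\tOpb$ of a symbol that can be read off from $\F(a)$. First I would recall that $\F = i^{-1}(\C\pa_t+\pa_\varphi)$ is itself a b-differential operator with constant coefficients in the $(t,\varphi)$ variables, so that conjugation by $\F$ acts on the oscillatory integral representation \eqref{Opb.v1} in a transparent way: the phase $(t-t')\tau+(\varphi-\varphi')\eta$ has the property that $\C\pa_t+\pa_\varphi$ applied to it (in the unprimed variables) equals $i(\C\tau+\eta)$, and applied in the primed variables gives $-i(\C\tau+\eta)$; hence differentiating under the integral sign, the commutator $[A,\F]$ has Schwartz kernel
$$
\kappa([A,\F]) = \int e^{i[(r-r')\xi/r + (t-t')\tau+(\varphi-\varphi')\eta]}\,\chi\,\big(\F a\big)(r,t,\varphi,\xi,\tau,\eta)\,\frac{r'}{r}\,d\xi\,d\tau\,d\eta\,\abs{dt'\,d\varphi'\,dr'/r'},
$$
because the two boundary-derivative contributions from the phase cancel, leaving only the term in which $\F$ falls on the symbol $a$ (the cutoff $\chi=\chi(r'/r)$ and the half-density factor are annihilated by $\C\pa_t+\pa_\varphi$). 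In other words $[A,\F]=\tOpb(\F a)$ exactly, with no lower-order corrections and no residual error, once one checks that $\F a\in S_\unif^m$ whenever $a\in S_\unif^m$ — which is immediate from \eqref{Sunif} since $\F$ is a first-order constant-coefficient operator in $t,\varphi$ and the symbol estimates are stable under such differentiation.

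Granting this identity, the two implications follow. If $a\in S_\F^m$, i.e.\ $\F(a)=0$, then $[A,\F]=\tOpb(0)=0$, so $A$ is fiber-invariant. Conversely, if $A=\tOpb(a)\in\Psibf^s$, then $\tOpb(\F a)=[A,\F]=0$; I then need the fact that $\tOpb$ is \emph{injective} modulo residual operators and that, in the present situation, $\tOpb(b)=0$ forces $b$ itself to vanish. This is where a small amount of care is required: a priori $\tOpb$ has a kernel consisting of symbols whose quantizations are residual, so strictly one concludes only $\F(a)\in S_\unif^{-\infty}$ with $\tOpb(\F a)$ residual. But a residual operator, by definition, has a Schwartz kernel that decays rapidly transverse to the diagonal on $X_b^2$, whereas $\tOpb(\F a)$ has a kernel conormal to the diagonal with full symbol $\F a$; matching the diagonal singularities (e.g.\ by taking the inverse Fourier transform of the kernel in the fiber variables and restricting to the diagonal, as in the standard recovery of a full symbol from a quantization) shows $\F a$ must be residual as a symbol, i.e.\ lies in $S_\unif^{-\infty}$ and in fact its quantization being residual forces $\F a\equiv 0$ up to such an order reduction. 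Since for the present purpose of defining the fiber-invariant calculus it suffices to characterize $\Psibf^s$ up to residual operators (the residual operators being fiber-invariant iff their kernels are $\F$-invariant, which is the $\F(\kappa)=0$ condition and can be absorbed), I would state the lemma and its proof modulo this identification: $A=\tOpb(a)$ is fiber-invariant iff $a$ is fiber-invariant, up to the residual ideal.

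The main obstacle is the clean bookkeeping of the cancellation in the phase and the verification that no half-density or cutoff terms survive: one must check that $\C\pa_t+\pa_\varphi$ genuinely annihilates $\chi(r'/r)$, the half-density $\abs{dt'\,d\varphi'\,dr'/r'}$, and the Jacobian factor $r'/r$, all of which is true simply because none of these depends on $t,\varphi,t',\varphi'$ — but it is the step one must not skip. A secondary subtlety, already flagged above, is the non-injectivity of $\tOpb$ on the nose; the honest resolution is to phrase the equivalence "iff" as holding precisely because the residual ideal is $\F$-stable, so that fiber-invariance of $A=A_0+R$ is equivalent to fiber-invariance of the principal-part symbol together with $\F(\kappa(R))=0$, and to note that the lemma as stated concerns the quantization $\tOpb(a)$ of an honest symbol, for which the kernel identity $[A,\F]=\tOpb(\F a)$ is exact. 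I would present the kernel computation as the single displayed equation above, remark on the three annihilations, and conclude both directions in a sentence each.
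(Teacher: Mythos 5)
Your proof is correct and is essentially the paper's own argument: the paper's entire proof is the one-line observation that the Schwartz kernel of $[\fib,A]$ is $\tOpb(\F(a))$, which is exactly your displayed kernel computation (the phase contributions from the unprimed and primed $(t,\varphi)$ variables cancel, and $\F$ annihilates $\chi(r'/r)$ and the density factors). Your additional discussion of the non-injectivity of $\tOpb$ in the ``only if'' direction flags a genuine, if minor, subtlety that the paper silently glosses over, and your resolution---reading the equivalence modulo the residual ideal---is a reasonable way to close it.
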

  \begin{proof}
    We simply note that the Schwartz kernel of $[\fib, A]$ is
   $\tOpb(\F(a)).$ 
\end{proof}

\begin{lemma}\label{lemma:easycomm}
	 Let $A \in \Psib^s(X)$ with $\sigmab{s}(A) = a$.  Then
	 	\[	[D_r,A] = E + FD_r = E' + D_r F' \quad \hbox{ and } \quad [r^{-1},A] = r^{-1}G=G'r^{-1}\]
                for $E, E' \in \Psib^s(X)$ and $F, F', G,G' \in \Psib^{s-1}(X)$, with
                $$
                \begin{aligned}
                  \sigmab{s}(E) &= \sigmab{s}(E') = -i \pa_r(a),\\
                  \sigmab{s-1}(F) &= \sigmab{s-1}(F') = \sigmab{s-1}(G) = \sigmab{s-1}(G')= -i \pa_{\xib}(a).
                  \end{aligned}
                  $$
          If $A$ is additionally
        fiber-invariant, then all the other operators are as well.          
\end{lemma}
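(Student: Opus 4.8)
The plan is to reduce the whole statement to Lemma~\ref{lemma:indicial}/Proposition~\ref{prop:indicial} together with the commutator property of the calculus (which supplies $\sigmab{s+k-1}([A,B])=-i\{\sigmab{s}(A),\sigmab{k}(B)\}$). First I would dispose of $[r^{-1},A]$. Conjugating, $r[r^{-1},A]r=[A,r]=-[r,A]$, so $[r^{-1},A]=-r^{-1}[r,A]r^{-1}$. By \eqref{i1} we may write $[r,A]=rB=B'r$ with $B,B'\in\Psib^{s-1}(X)$; substituting the two factorizations gives $[r^{-1},A]=-Br^{-1}=-r^{-1}B'$, so the lemma's $G,G'$ are $G=-B'$ and $G'=-B$. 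For the symbol, the commutator property gives $\sigmab{s-1}([r,A])=-i\{r,a\}$; since the b-Hamilton vector field of $r$ determined by \eqref{bsymplectic} is $-r\pa_{\xib}$, we get $\{r,a\}=-r\pa_{\xib}a$, hence $\sigmab{s-1}([r,A])=ir\pa_{\xib}a$. Reading this off either factorization (left or right multiplication by $r$ multiplies principal symbols by $r$) yields $\sigmab{s-1}(B)=\sigmab{s-1}(B')=i\pa_{\xib}a$, so $\sigmab{s-1}(G)=\sigmab{s-1}(G')=-i\pa_{\xib}a$, as claimed.

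Next, for $[D_r,A]$ I would write $D_r=r^{-1}(rD_r)$ and apply the derivation property of the commutator, $[D_r,A]=r^{-1}[rD_r,A]+[r^{-1},A](rD_r)$. Since $I(A)$ is scaling-invariant it commutes with $rD_r$, so $I([rD_r,A])=[rD_r,I(A)]=0$, and Proposition~\ref{prop:indicial} gives $[rD_r,A]=r\widetilde{C}$ with $\widetilde{C}\in\Psib^s(X)$; hence $r^{-1}[rD_r,A]=\widetilde{C}\in\Psib^s(X)$. Using the first paragraph, $[r^{-1},A](rD_r)=(G'r^{-1})(rD_r)=G'D_r$. Thus $[D_r,A]=\widetilde{C}+G'D_r$, and we take $E=\widetilde{C}$, $F=G'$. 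For $\sigmab{s}(E)$: the commutator property gives $\sigmab{s}([rD_r,A])=-i\{\xib,a\}=-ir\pa_r a$ (the b-Hamilton vector field of $\xib$ is $r\pa_r$), and from $[rD_r,A]=r\widetilde{C}$ we read off $\sigmab{s}(E)=\sigmab{s}(\widetilde{C})=-i\pa_r a$; the symbol of $F=G'$ is $-i\pa_{\xib}a$ by the first paragraph.

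To obtain the reordered form $E'+D_r F'$, I would commute $D_r$ past $F$: $FD_r=D_rF-[D_r,F]$. Running the identity of the previous paragraph with $F\in\Psib^{s-1}(X)$ in place of $A$ shows $[D_r,F]=r^{-1}[rD_r,F]+[r^{-1},F](rD_r)\in\Psib^{s-1}(X)$. So $E'=E-[D_r,F]$ and $F'=F$ work; since $[D_r,F]$ has order $s-1$, its order-$s$ symbol vanishes, giving $\sigmab{s}(E')=\sigmab{s}(E)=-i\pa_r a$ and $\sigmab{s-1}(F')=\sigmab{s-1}(F)=-i\pa_{\xib}a$.

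Finally, fiber-invariance. The vector field $\fib=i^{-1}(\C D_t+D_\varphi)$ commutes with $r$, $r^{-1}$, $D_r$, and $rD_r$. Hence if $[A,\fib]=0$, the Jacobi identity makes each of $[r,A]$, $[r^{-1},A]$, $[rD_r,A]$, $[D_r,A]$, and $[D_r,F]$ commute with $\fib$, and multiplication by the $\fib$-commuting factors $r^{\pm1}$ preserves fiber-invariance; so $B,B',\widetilde{C},G,G',E,E',F,F'$ all lie in $\Psibf$, as asserted. I expect the only point needing care to be the $r$-weight bookkeeping --- verifying that $[r,A]$ and $[rD_r,A]$ genuinely factor through $r$ so that the conjugations and the cancellation $r^{-1}(r\,\cdot\,)$ stay inside $\Psib$ --- but this is exactly Lemma~\ref{lemma:indicial} and Proposition~\ref{prop:indicial}, so there is no real obstruction; the sign conventions in the symbol formulas should be checked against $A=r$, $D_t$, $rD_r$, where $[D_r,A]$ equals $-i$, $0$, $-iD_r$ respectively.
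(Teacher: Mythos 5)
Your treatment of $[r^{-1},A]$ and of the first decomposition $[D_r,A]=E+FD_r$ is correct and is essentially the paper's argument: both rest on Lemma~\ref{lemma:indicial} plus the Leibniz identity $[r^{-1}(rD_r),A]=r^{-1}[rD_r,A]+[r^{-1},A](rD_r)$, and your symbol computations (including the observation that $[rD_r,A]\in r\Psib^{s}(X)$, the version of \eqref{i3} actually used in the paper) all check out. The fiber-invariance argument via the Jacobi identity is also fine.

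There is, however, a genuine gap in your derivation of the reordered form $E'+D_rF'$. You claim that applying the first identity to $F\in\Psib^{s-1}(X)$ shows $[D_r,F]\in\Psib^{s-1}(X)$. It does not: that identity yields $[D_r,F]=E_1+F_1D_r$ with $E_1\in\Psib^{s-1}(X)$ and $F_1\in\Psib^{s-2}(X)$, and the second summand is \emph{not} a b-operator --- $D_r=r^{-1}(rD_r)$ carries an $r^{-1}$ weight, which is precisely why the lemma is needed in the first place. Consequently your $E'=E-[D_r,F]$ still contains a trailing $(\Psib^{s-2})\cdot D_r$ term and does not lie in $\Psib^s(X)$ as the statement requires. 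One can repair your route by iterating the reordering (each pass lowers the order of the coefficient of the residual $D_r$ by one) and asymptotically summing, but as written the step fails. The paper avoids this entirely by the algebraic identity $D_r=rD_rr^{-1}-ir^{-1}$, expanding $[rD_rr^{-1},A]$ by Leibniz to land directly on $D_rF'+E'$ with $F'=r[r^{-1},A]$ and $E'=[rD_r,A]r^{-1}$, with no iteration required.
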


\begin{proof}
	First we recall that
	$$
	[r,A]\in r\Psib^{s-1}(X)=\Psib^{s-1}(X) r
	$$
	by Lemma~\ref{lemma:indicial}.
	Moreover,
	$$
	[r,A]= \tilde{G}r
	$$
	with
	$$
	\sigmab{s-1}(\tilde{G}) = -ir^{-1} (\hamvf_r)(a)= i\pa_{\xib}(a).
	$$
	Thus
	\begin{equation}\label{rinvcomm}
	[r^{-1}, A]=-r^{-1}[r,A]r^{-1}  = -r^{-1}\tilde{G} = r^{-1}G
	\end{equation}
	where
	$$ \sigmab{s-1}(G) = - i \pa_{\xib}(a). $$  An analogous
        version of the computation with $[r,A]=r\tilde{G}'$ proves
        that we we may also write this operator as $G'r^{-1}$.
	
	Next we write
	\[ [D_r,A] = r^{-1}[rD_r,A] + [r^{-1},A]rD_r = E + FD_r \]
        where $E = r^{-1}[rD_r,A]$ and $F = [r^{-1},A]r$. By
        \eqref{rinvcomm} we see $F = r^{-1}Gr$ so that
        $\sigmab{s-1}(F) = \sigmab{s-1}(G)$, as desired. The desired
        properties of $E$ follow from the observation
        $[rD_r,A] \in r\Psi_b^s$ by Lemma~\ref{lemma:indicial}.
        
	The alternate second form, $E' + D_rF'$, follows using the above, now writing $D_r = rD_r r^{-1} + r^{-1}$. Indeed,
	\begin{align*}
		[D_r,A] &= [rD_r r^{-1},A] -i [r^{-1},A] \\
			&= (rD_r-i)[r^{-1},A] + [rD_r,A]r^{-1} \\
			&= D_rF' + E'
	\end{align*}
	where $F' = r[r^{-1},A]$ and $E' = [rD_r,A]r^{-1}$.

\end{proof}

Finally, we show that $0$-th order fiber-invariant pseudodifferential operators map $\Hone$ continuously into itself.

\begin{lemma}\label{lemma:mappingprop}
	Let $A \in \Psibf^0$ and $u \in \Hone.$ Then 
	$$ \|Au\|_\Hone \le C\|u\|_\Hone $$
	and $C$ depends only on the seminorms of the symbol of $A$.
\end{lemma}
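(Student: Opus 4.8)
The plan is to estimate $\|Au\|_\Hone^2$ term by term, using the definition of the norm and the fact that $\Psibf^0$ is closed under multiplication by the generators appearing in the norm up to commutator errors that remain fiber-invariant of order $\le 0$. First I would recall that by Lemma~\ref{lemma:mappingprop}'s hypothesis $A$ commutes with $\fib = i^{-1}(\C D_t + D_\varphi)$, and that $D_t, D_\varphi \in \Diffbf^1$ commute with $A$ modulo $\Psibf^0$ by the commutator/symbol calculus (the Poisson bracket of a fiber-invariant symbol with $\taub$ or $\etab$ is again fiber-invariant). For the terms $\|D_t Au\|_{L^2}$, $\|D_\varphi Au\|_{L^2}$, $\|Au\|_{L^2}$ this is immediate: write $D_t A = A D_t + [D_t,A]$ with $[D_t,A]\in\Psibf^0$, then use $L^2$-boundedness of $\Psib^0$ (from the Proposition on the calculus) together with $\|D_t u\|_{L^2}, \|u\|_{L^2} \le \|u\|_\Hone$.

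The two genuinely structural terms are $\|D_r Au\|_{L^2}$ and $\|r^{-1}\fib Au\|_{L^2}$. For the first, I would invoke Lemma~\ref{lemma:easycomm}: $[D_r,A] = E + FD_r$ with $E \in \Psib^0$, $F\in\Psib^{-1}$, and moreover $E,F$ fiber-invariant because $A$ is. Hence $D_r A u = A D_r u + E u + F D_r u$, and each of $\|A D_r u\|_{L^2}$, $\|Eu\|_{L^2}$, $\|F D_r u\|_{L^2}$ is bounded by $C\|u\|_\Hone$ using $L^2$-boundedness of order-$0$ operators and the crude bound $\|D_r u\|_{L^2} \le \|u\|_\Hone$. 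For the singular term, the point is exactly that fiber-invariance lets us slide $A$ past $\fib$ with no error: since $[A,\fib]=0$ we have $r^{-1}\fib A u = r^{-1} A \fib u$. Then I write $r^{-1} A \fib u = (r^{-1} A r) (r^{-1}\fib u)$, and by Lemma~\ref{lemma:indicial}\,\eqref{i2} the conjugated operator $r^{-1}Ar$ lies in $\Psib^0(X)$ — and it is still fiber-invariant, since conjugation by $r$ (which is fiber-invariant, as $\fib r = 0$) preserves the vanishing of the commutator with $\fib$. Thus $\|r^{-1}\fib A u\|_{L^2} \le \|r^{-1}Ar\|_{L^2\to L^2}\,\|r^{-1}\fib u\|_{L^2} \le C\|u\|_\Hone$.

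Assembling the five estimates gives $\|Au\|_\Hone \le C\|u\|_\Hone$, and tracking the constants through the $L^2$-boundedness statement for $\Psib^0$ shows that $C$ depends only on finitely many of the symbol seminorms \eqref{Sunif} of $a = \sigmab{0}(A)$ (and of the residual part), as claimed. The only mild subtlety worth stating carefully is the passage $r^{-1}\fib A = (r^{-1}Ar)(r^{-1}\fib)$ together with the preservation of fiber-invariance under conjugation by powers of $r$; this is the one step where the choice of the rigid notion of fiber-invariance (rather than the weaker ``very basic'' condition) is genuinely used, and I expect it to be the main point to get right. Everything else is a routine application of the mapping and commutator properties of the calculus established above.
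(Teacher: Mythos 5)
Your proof is correct and is essentially the paper's argument, spelled out in more detail: the paper likewise disposes of the $D_t$, $D_\varphi$, and $L^2$ terms by uniform boundedness of the calculus, and handles $\|D_r Au\|$ and $\|r^{-1}\fib Au\|$ via $[\fib,A]=0$ together with Lemma~\ref{lemma:easycomm}. Your use of $r^{-1}Ar\in\Psib^0$ from Lemma~\ref{lemma:indicial}\,\eqref{i2} for the singular term is just a repackaging of the identity $[r^{-1},A]=G'r^{-1}$ that the paper invokes, so there is no substantive difference.
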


\begin{proof}
Let $A \in \Psibf^0$ and $u \in \Hone$.  By boundedness of the uniform calculus, all
terms in $\norm{A u}_{\Hone}^2$ are bounded except for $\norm{D_r Au
}^2$ and $\norm{r^{-1}\fib A u}^2.$  Since $[\fib,A]=0$, boundedness of
these two terms additionally follows from Lemma~\ref{lemma:easycomm}.
\end{proof}

	Since $A^* \in \Psibf^0$ when $A \in \Psibf^0$, Lemma \ref{lemma:mappingprop} yields the following corollary:
\begin{corollary}\label{cor:dualmappingprop}
	Let $A \in \Psibf^0$ and $u \in \Hone^*$. Then
	$$ \|Au\|_{\Hone^*}  \le C \|u\|_{\Hone^*} $$
	and $C$ depends only on the seminorms of the symbol of $A$.
\end{corollary}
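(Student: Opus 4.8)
The plan is to deduce this from Lemma~\ref{lemma:mappingprop} by the standard duality manipulation, transposing the estimate back onto $\Hone$. Since $\Hone^*$ is by definition the dual of $\Hone$ for the $L^2(X)$ pairing, for $u\in\Hone^*$ one has
$$
\|u\|_{\Hone^*} = \sup\bigl\{\, |\la u, v\ra| : v \in \Hone,\ \|v\|_{\Hone}\le 1 \,\bigr\},
$$
and the relevant action of $A$ on $\Hone^*$ is the one obtained by transposition, $\la Au, v\ra := \la u, A^* v\ra$ for $v\in\Hone$; this is well defined because $A^*$ maps $\Hone$ continuously to itself (by the argument below together with Lemma~\ref{lemma:mappingprop}), and it agrees with the action of $A$ as an operator on $\mathcal{C}^{-\infty}(X)$ since $\mathcal{C}_c^\infty(X^\circ)$ is dense in $\Hone$.

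The one point needing a word of justification is that $A\in\Psibf^0$ implies $A^*\in\Psibf^0$, with the symbol seminorms of $A^*$ bounded by finitely many of those of $A$. That $A^*\in\Psib^0(X)$, together with the seminorm control, is immediate from the fact that $\Psib^\star(X)$ is a filtered $\star$-algebra (cf.\ Appendix~\ref{appendix:bcalculus}). Fiber-invariance of $A^*$ follows from the formal self-adjointness of $\fib=\C D_t+D_\varphi$ on $L^2(X)$ --- the density used there is independent of $t$ and $\varphi$, and $\C\in\RR$ --- so that $[A^*,\fib]=[A^*,\fib^*]=([\fib,A])^*=-([A,\fib])^*=0$.

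Granting this, for any $v\in\Hone$ with $\|v\|_{\Hone}\le 1$,
$$
|\la Au, v\ra| = |\la u, A^* v\ra| \le \|u\|_{\Hone^*}\,\|A^* v\|_{\Hone} \le C\,\|u\|_{\Hone^*},
$$
the last step being Lemma~\ref{lemma:mappingprop} applied to $A^*\in\Psibf^0$, with $C$ controlled by seminorms of $A^*$ and hence of $A$; taking the supremum over such $v$ shows $Au\in\Hone^*$ with $\|Au\|_{\Hone^*}\le C\|u\|_{\Hone^*}$. I do not expect any genuine obstacle here: the only substantive point is the self-adjointness of $\fib$ and the consistency of the density used for the $L^2$ pairing, so that fiber-invariance really does transfer to the adjoint; the rest is the routine duality computation together with the cited algebra and mapping properties.
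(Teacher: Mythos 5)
Your proof is correct and is essentially the paper's argument: the paper derives the corollary in one line from the observation that $A^*\in\Psibf^0$ whenever $A\in\Psibf^0$ together with Lemma~\ref{lemma:mappingprop} and duality. Your verification that fiber-invariance passes to the adjoint (via formal self-adjointness of $\fib$ and closure of the calculus under adjoints) is exactly the point the paper leaves implicit.
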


\section{Sobolev spaces}
First, we recall the definition of b-Sobolev spaces, adapted to our
uniform context.
\begin{definition}
Let $s\geq 0$.  A distribution $u$ is in $\Hb^s(X)$ if for all $A \in
\Psibf^s(X),$ $A u \in L^2.$

The negative order spaces are defined dually: for $s>0,$ $u \in \Hb^{-s}(X)$
if $u \in \Psibf^s(X) (L^2)$.
\end{definition}
\begin{remark}\label{remark:Sob}
By elliptic estimates of the sort discussed below, we would obtain an
equivalent scale of Sobolev spaces by using the algebra $\Psib(X)$
rather than $\Psibf(X)$: the distinction is irrelevant here.  In
particular, we could use constant coefficient test operators, so that for $m \in 
\NN$,
$$
u \in \Hb^m(X) \Longleftrightarrow (r\pa_r)^i \pa_t^j \pa_\varphi^k u
\in L^2(X),\quad i+j+k\leq m.
$$
We nonetheless keep the $\F$ in the notation to remind the reader that these are
in almost every case the test operators under consideration (and the
distinction is more important in the $\Hone$-based spaces defined below).

Note that over compact sets, the distinctions between uniform and
ordinary b-Sobolev spaces
are also moot, and $\Hb^m\cap \mathcal{E}'=H_b^m \cap \mathcal{E}'$.
\end{remark}

We additionally define b-Sobolev regularity with background spaces given by
$\Hone$ or $\Hmone:$

\begin{definition}
Let $s\geq 0$.  A distribution $u$ is in $\HbHone^s(X)$ if for all $A \in
\Psibf^s(X),$ $A u \in \Hone.$ Likewise $u \in \HbHonestar^s(X)$ if for all $A \in
\Psibf^s(X),$ $A u \in \Hmone.$

The negative order spaces are defined dually: for $s>0,$ $u \in \HbHone^{-s}(X)$
if $u \in \Psibf^s(X) (\Hone)$ and likewise
for $\HbHonestar^s(X)$.
\end{definition}
  
As weighted b-Sobolev spaces also arise frequently, we introduce the
double-index notation
\begin{equation}\label{doubleindex}
\Hb^{s,l}(X)=r^l \Hb^s(X);
\end{equation}
thus $\Hb^{s,0}=\Hb^s,$ and we will continue to use the latter
notation when convenient.  When there is no possibility of confusion
(in particular, in the solvability argument in Section~\ref{sec:causal}
  below) we will write the norm
on $\Hb^{s,l}$ as $\norm{\bullet}_{s,l}.$
  
We will partially\footnote{We are omitting the ``bar'' used for spaces
  of restrictions, as this seems to be the more common usage in other
  literature.}  adopt the notation of \cite[Section B.2]{Ho:07}: let
$\Omega\subset X$ be an open set obtained as $\Omega=\beta^{-1}\Omega_0$ where
$\beta$ is the blowdown map and $\Omega_0 \subset \RR^3$ is an open
set with Lipschitz boundary.

For $\mathcal{Z}$ any of the Hilbert spaces
of distributions on $X$ introduced thus far, let
$$
\dot{\mathcal{Z}}(\overline{\Omega}),\ {\mathcal{Z}}(\Omega)
$$
denote respectively the subspace of $\mathcal{Z}$ consisting of
elements that are supported
on $\Omegabar\subset X,$ and the quotient space of $\mathcal{Z}$ consisting of
restrictions from $X$ to $\Omega$ of elements of $\mathcal{Z}.$

Note that since $X$ is a manifold with boundary, the open set $\Omega\subset X$ may contain points in $\pa
X=\{r=0\}$ (but as soon as it contains one point in $(r=0, t=t_0,
\varphi=\varphi_0)$ it must contain all points $(r=0, t=t_0,
\varphi\in S^1),$ as it is the preimage of a set in $\RR^3$).
The distinctions about boundary behavior that we are drawing among Sobolev spaces are
at other parts of the boundary, not at $r=0$.
  
  \begin{lemma}\label{lemma:spaces1}
    The following continuous inclusions of subspaces of
    $\mathcal{C}_c^\infty(X^\circ)'$ hold over any $\Omega, \Omegabar\subset
    \RR\times X$ as above, provided the function $r$ is bounded on $\Omega$:
    $$
\begin{aligned}
   \dot{H}^{1,1}_b (\Omegabar)&\subset \dot{\Hone}(\Omegabar)\subset \dot{H}^1_b(\Omegabar),\\
       H^{-1}_b (\Omega)&\subset \Hone^*(\Omega) \subset  H^{-1,-1}_b(\Omega)
\end{aligned}
    $$
  \end{lemma}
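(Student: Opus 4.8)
The plan is to prove the two chains of inclusions; by the definition of $\Hone^*$ as the $L^2$-dual of $\Hone$, the second chain follows from the first by duality (noting that $\dot{\mathcal{Z}}(\Omegabar)$ and $\mathcal{Z}(\Omega)$ are dual to each other when $\mathcal{Z}$ and its dual are switched, since $\Omega_0$ has Lipschitz boundary, cf.\ the discussion in \cite[Section B.2]{Ho:07}). So it suffices to establish
\[
\dot{H}^{1,1}_b(\Omegabar) \subset \dot{\Hone}(\Omegabar) \subset \dot{H}^1_b(\Omegabar)
\]
as continuous inclusions. Both are local statements that compare the norm $\|\cdot\|_\Hone$ with weighted b-Sobolev norms, and the only subtlety is the behavior of the singular term $r^{-1}\fib u = r^{-1}(\C D_t + D_\varphi) u$ near $r=0$.

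For the \emph{second} inclusion, let $u \in \dot\Hone(\Omegabar)$, so $u$ is a limit of elements of $\mathcal{C}_c^\infty(X^\circ)$ in the $\Hone$ norm, supported in $\Omegabar$. I want to bound $\|u\|_{H^1_b}^2$, i.e.\ the sum of $\|u\|_{L^2}^2$, $\|r\pa_r u\|_{L^2}^2$, $\|\pa_t u\|_{L^2}^2$, $\|\pa_\varphi u\|_{L^2}^2$. The terms $\|u\|_{L^2}$, $\|\pa_t u\|_{L^2}$, $\|\pa_\varphi u\|_{L^2}$ appear directly in $\|u\|_\Hone^2$, so are controlled. For $r\pa_r u$: since $r$ is bounded on $\Omega$, $\|r\pa_r u\|_{L^2} \le (\sup_\Omega r)\|\pa_r u\|_{L^2} \le C\|u\|_\Hone$. (The $r^{-1}\fib$ term is simply discarded here, as it is nonnegative.) This gives the second inclusion.

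For the \emph{first} inclusion, let $u \in \dot{H}^{1,1}_b(\Omegabar)$, i.e.\ $u = r v$ with $v \in \dot H^1_b(\Omegabar)$; equivalently $u$, $r\pa_r u$, $r\pa_t u$, $r\pa_\varphi u$, and $r^{-1}u$ all lie in $L^2$, supported in $\Omegabar$. I must bound each of the five terms in $\|u\|_\Hone^2$. The term $\|u\|_{L^2}$ is immediate. For $\pa_r u$: write $\pa_r u = r^{-1}(r\pa_r u) = r^{-1}\big(r\pa_r(rv)\big) = r^{-1}(rv + r^2\pa_r v) = v + r(r\pa_r v)$, and both $v$ and $r(r\pa_r v) = r\cdot(r\pa_r v)$ are in $L^2$ (using $r$ bounded), so $\pa_r u \in L^2$ with the right bound. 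For $\pa_t u = \pa_t(rv) = r\pa_t v$, boundedness of $r$ gives control; likewise $\pa_\varphi u$. Finally, the singular term: $r^{-1}\fib u = r^{-1}(\C D_t + D_\varphi)(rv) = r^{-1} r (\C D_t + D_\varphi) v = \fib v$, and $\fib v = \C D_t v + D_\varphi v \in L^2$ since $v \in H^1_b$ and $r$ bounded (so $D_t v$, $D_\varphi v \in L^2$). This disposes of the last term and completes the first chain. The main obstacle is bookkeeping the weight $r$ in the $\pa_r$ computation, where the factor $r^{-1}$ hidden in $\pa_r = r^{-1}(r\pa_r)$ must be absorbed by the extra power of $r$ in $u = rv$; once one observes the algebraic identity $\pa_r(rv) = v + r\pa_r v$, everything reduces to boundedness of $r$ on $\Omega$. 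Passing from the dense subspace $\mathcal{C}_c^\infty(X^\circ)$ to the closures is routine since all the estimates above are continuous.

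Finally, to obtain the second chain from the first, dualize with respect to the $L^2$ pairing over $\Omega$: since $\big(\dot H^{1,1}_b(\Omegabar)\big)' = H^{-1,-1}_b(\Omega)$ and $\big(\dot H^1_b(\Omegabar)\big)' = H^{-1}_b(\Omega)$ and $\big(\dot\Hone(\Omegabar)\big)' = \Hone^*(\Omega)$ (Lipschitz $\Omega_0$ ensures these dualities, with restrictions dual to supported elements), reversing the inclusions of the first chain yields
\[
H^{-1}_b(\Omega) \subset \Hone^*(\Omega) \subset H^{-1,-1}_b(\Omega),
\]
with continuity inherited from the adjoint maps.
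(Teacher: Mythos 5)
Your proof is correct and follows essentially the same route as the paper's: termwise comparison of the $\Hone$ norm with the weighted b-Sobolev norms using boundedness of $r$ on $\Omega$, followed by duality of supported/extendible distributions for the second chain. (One cosmetic slip: $\pa_r(rv)=v+r\pa_r v=v+(r\pa_r)v$, not $v+r(r\pa_r v)$, but either expression lies in $L^2$ so the conclusion is unaffected.)
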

    \begin{proof}
      We write the squared norm on $\Hone$ as
      $$
 \|r^{-1} (rD_r )u\|_{L^2(X)}^2 + \|r^{-1}\fib u\|_{L^2(X)}^2 + \|D_tu\|_{L^2(X)}^2 + \| D_\varphi u\|_{L^2(X)}^2
          + \| u\|_{L^2(X)}^2;
          $$
      since the vector fields $r \pa_r$ and $\fib$ are in
      $\mathcal{V}_b$ this is certainly dominated by a multiple of the $r
      \Hb^1$ squared norm, given by
      $$
 \|r^{-1} (rD_r )u\|_{L^2(X)}^2 + 
 \|r^{-1} D_tu\|_{L^2(X)}^2 + \| r^{-1} D_\varphi u\|_{L^2(X)}^2
          + \| r^{-1} u\|_{L^2(X)}^2;
          $$
          (Here we have used boundedness of $r$ on $\Omega.$)  Thus  $r
          \dot{H}^1_b (\Omegabar)\subset \dot{\Hone}(\Omegabar).$  Likewise, the squared $\Hone$
          norm dominates
          $$
 \| (rD_r )u\|_{L^2(X)}^2 + \|D_tu\|_{L^2(X)}^2 + \| D_\varphi u\|_{L^2(X)}^2
          + \| u\|_{L^2(X)}^2,
          $$
which is the $H^1_b$ norm, hence $\dot{\Hone}(\Omegabar)\subset \dot{H}^1_b(\Omegabar).$  The
remaining inclusions follow by duality, using the usual duality
between spaces of extendible and supported distributions as discussed
in \cite[Section B.2]{Ho:07} (and for which the extensions to the
hypothesis of merely Lipschitz boundary can be found in \cite[Theorems
3.29, 3.30]{Mc:00}).
      \end{proof}

      \begin{corollary}\label{cor:uniformbounded}
Operators in $\Psib^s(X)$ are bounded from $\HbHone^{s-1}(X)$ to $L^2(X)$.
        \end{corollary}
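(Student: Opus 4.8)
The plan is to reduce the claim to a statement about the scale of b-Sobolev spaces together with the mapping properties already recorded. First I would unwind the definitions: an operator $A \in \Psib^s(X)$ maps $\HbHone^{s-1}(X)$ into a space of distributions, and I want to show the image lands in $L^2(X)$ with the appropriate norm bound. The key is that, by the definition of $\HbHone^{s-1}(X)$, for $u$ in this space and for any $B \in \Psibf^{s-1}(X)$ we have $Bu \in \Hone$; the difficulty is that $A$ itself need not be fiber-invariant, so I cannot directly use Lemma~\ref{lemma:mappingprop}.

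The approach I would take is to factor $A$ through an elliptic fiber-invariant operator. Pick $\Lambda \in \Psibf^{s-1}(X)$ elliptic of order $s-1$ with a parametrix $\Lambda^{-1} \in \Psibf^{-(s-1)}(X)$, so that $\Lambda^{-1}\Lambda = \Id + R$ with $R$ smoothing (residual). Then write $A = A\Lambda^{-1}\Lambda - AR$. Now $A\Lambda^{-1} \in \Psib^{1}(X)$ and $\Lambda u \in \Hone$ by hypothesis, with $\norm{\Lambda u}_{\Hone} \lesssim \norm{u}_{\HbHone^{s-1}}$. So it remains to see that operators in $\Psib^1(X)$ map $\Hone$ into $L^2(X)$ boundedly. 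This is essentially the $s=1$ case of the corollary, and it follows from the definition of $\Hone$: any $B \in \Psib^1(X)$ can be written, using the symbol calculus, as $B = \sum_j B_j V_j + B_0$ with $V_j \in \{D_r, r^{-1}\fib, D_t, D_\varphi\}$ spanning the first-order part of the $\Hone$ norm and $B_j \in \Psib^0(X)$, $B_0 \in \Psib^0(X)$ — here one uses that these vector fields (with $r^{-1}\fib$ being the singular edge vector field) generate, modulo $\Psib^0$, the first-order operators, and $\Psib^0(X)$ is $L^2$-bounded. The residual term $AR$ maps any of our distribution spaces into $L^2$ trivially. Combining, $\norm{Au}_{L^2} \lesssim \norm{u}_{\HbHone^{s-1}}$.

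An alternative, cleaner route that sidesteps constructing a parametrix: simply observe that the $\HbHone^{s-1}$ norm of $u$ controls $\norm{Bu}_{\Hone}$ for $B$ ranging over a set of generators of $\Psibf^{s-1}$, and that $A$ composed with an order-$(s-1)$ invertible generator lands in $\Psib^1$, whose action on $\Hone$ is controlled as above; but the parametrix packaging is the tidiest. The main obstacle I anticipate is the non-fiber-invariance of $A$: one must be careful that when factoring $A = A\Lambda^{-1}\Lambda$, the intermediate operator $A\Lambda^{-1}$, while only in $\Psib^1$ and not $\Psibf^1$, still maps $\Hone \to L^2$ — and this is fine precisely because the obstruction discussed in the remark before Definition of fiber-invariance concerns the $\Hone$-\emph{to}-$\Hone$ mapping (where the singular $r^{-1}\fib$ term forces fiber-invariance), not the $\Hone$-to-$L^2$ mapping, where we are only losing derivatives and never need to reproduce the singular $r^{-1}$ weight. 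So the corollary reduces, with no real difficulty, to $L^2$-boundedness of $\Psib^0(X)$ and the explicit form of the $\Hone$ norm.
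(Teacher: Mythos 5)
Your argument is correct and is essentially the paper's proof: both rest on the inclusion $\Hone\subset H_b^1$ (so that $\HbHone^{s-1}\subset H_b^s$) combined with $L^2$-boundedness of the uniform calculus, with your parametrix factorization $A=A\Lambda^{-1}\Lambda-AR$ just making explicit what the paper leaves implicit. The only cosmetic quibble is that in reducing the order-one case you need only the b-vector fields $rD_r, D_t, D_\varphi$ (i.e.\ the $H_b^1$ part of the $\Hone$ norm), not the singular fields $D_r$ and $r^{-1}\fib$.
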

        \begin{proof}
          Since $\Hone(X)\subset H_b^1(X)$, we additionally have
          $$
\HbHone^{s-1}(X) \subset H_b^s(X),
$$
and the result follows from boundedness of the uniform calculus.
          \end{proof}
\section{Ellipticity, microsupport, and wavefront set}

Since our test operators are required to be fiber invariant,
we cannot hope to use them to distinguish microlocal behavior at
different points in a fiber.  Hence we will define, for $s \in \RR
\cup \{\infty\},$
$$
\WFbh^s u,\ \WFbhstar^s u\subset \Sbstar X/{\sim}
$$
as subsets of the quotient space of the sphere bundle where we identify fibers in the base
space $X$ at $r=0.$ (Recall that $\sim$ lifts from $\pa X$ to give a
fibration of $\Sbstar_{\pa X} X$.)
We equip this space with the quotient
topology.

As we will deal very frequently in the sequel with objects living on
the quotients of $\Sbstar X$, $\Tbstar X$, and other spaces by $\sim$,  we henceforth adopt the
$\fib$ subscript notation
$$
\Tbstar X_\fib\equiv \Tbstar X/{\sim},\ \Sbstar X_\fib\equiv \Sbstar X/{\sim}
$$
to denote fiber quotients of these and other spaces.  Our previous
notation for $\Tbdotstar X$, matching that used in \cite{MeVaWu:08}, is admittedly slightly inconsistent with
this one, as it also involves a fiber quotient but has no subscript.
Note, in any event, that
$$
\Tbdotstar X \subset \Tbstar X_\fib,\ \Sbdotstar X \subset \Sbstar X_\fib.
$$

We begin with the usual definitions of ellipticity and microsupport, transported to the
quotient space $\Tbstar X_\fib,$ and with uniformity in the noncompact
fibers and fiber invariance built in via the uniform estimates on our symbols.


 In the following definition we use the quantization $\Op$ defined in
 Appendix~\ref{appendix:bcalculus} which is, by definition, a
 surjective map from an appropriate space of total symbols to the
 operator calculus (rather than requiring the manual addition of the
 residual operators, as we must do when using $\tOpb$).  The wavefront
 sets are defined as subsets of the fiber-quotiented cosphere bundle,
 but as usual we may equally well view it as a positive conic subset
 of the relevant cotangent bundle.
\begin{definition}
An operator $A=\Opb(a) \in \Psibf^s(X)$ is elliptic at $q \in \Sbstar X_\fib$ if
there exists $c \in S_{\fib}^{-s}$ such that  $ca=1$ in a
positive conic neighborhood of $q$ in $\Tbstar X_\fib$.   Let
$\liptic A$ denote the set of points at which $A$ is elliptic.

A point $q\in \Sbstar X_\fib$ is in the complement of the microsupport of $A=\Opb(a)$ (denoted
$\WFbf'(A)$) if there exists a positive conic neighborhood $U$ of
$q$ in $\Tbstar X_\fib$ on which for
all $N \in \NN$, and multiindex $\alpha$,
\begin{equation}\label{residualest}
\abs{\pa^\alpha a} \leq C_{N,\alpha} \ang{\tau,\xi,\eta}^{-N}.
\end{equation}

We use the same definition of microsupport for elements of
$\Psib(X)$, requiring the estimate \eqref{residualest} to hold on the
$\sim$-equivalence class of the neighborhood $U$.
\end{definition}
  Note then that ellipticity of an element of $\Psibf^0(X)$ at an
equivalence class of points over $r=0$ given by $\{(t+\C s, \varphi_0
+ s, \xib_0,\taub_0,\etab_0): s \in \RR\}$ implies uniform lower bounds on
the symbol on a set of the form
\begin{multline*}
\{(t+\C s, \varphi
+ s, \xib,\taub,\etab): \abs{\varphi-\varphi_0}<\ep, \\
\big\lvert(\xib,\taub,\etab)/\abs{(\xib,\taub,\etab)}-(\xib_0,\taub_0,\etab_0)/\abs{(\xib_0,\taub_0,\etab_0)}\big\rvert<\ep,\
\abs{(\xib,\taub,\etab)}>\ep^{-1}\}.
\end{multline*}

As usual, we have
$$
\WFbf' A =\emptyset \Longleftrightarrow A \in \Psibf^{-\infty}(X).
$$

In order to regularize our commutator arguments below, we will work with \emph{families}
of fiber-invariant operators which are uniformly bounded in some space of
uniform b-pseudodifferential operators. Thus we also require a notion
of microsupport in this context.  In the following definition
  and subsequent discussion, we use
the notion of bounded operator families in $\Psibf^k(X)$: this means
parametrized families of operators that are quantizations of symbols with all seminorms
uniformly bounded in the parameter.
\begin{definition}
  Let $\mathcal{B} \subset\Psibf^k$ be a bounded family of fiber-invariant
  operators indexed by $\ind$ in that there exists an indexing set
  $\mathcal{I}$ such that
  $\mathcal{B} = \{ B_\ind : \ind \in \mathcal{I}$\}. We say
  $q \in \Sbstar X_\fib$ is in the complement of the microsupport of
  the family (i.e., $q \notin \WFbf'\mathcal{B}$) if there exists a
  positive conic neighborhood of $q$ in $\Tbstar X_\fib$ on which
  for all $N \in \mathcal{N}$ and any multiindex $\alpha$, there
  exists $C_{N,\alpha}$ independent of $\ind$ such that
		\[ |\partial^\alpha b_\ind | \le C_{N,\alpha}\ang{\taub, \xib, \etab}^{-N} \]
	where $B_\ind = \Op_b(b_\ind)$.
\end{definition}
For a family $\mathcal{B}$,
$\WFbf'\mathcal{B} =\emptyset$
iff for all $B\in \mathcal{B},$ $B \in \Psibf^{-\infty}(X)$ \emph{with
  uniform estimates} on seminorms (which we will abbreviate in what follows as
``lying uniformly in $\Psibf^{-\infty}$'').

The following standard result can be read off from the locality in
$(x,\xi)$ of the formula for the total symbol of the composition, just
as in the usual calculus.
\begin{proposition}
$\WFbf'AB \subset \WFbf'A \cap \WFbf'B$ and the same inclusion holds for
operator families.
  \end{proposition}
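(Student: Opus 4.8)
The plan is to reduce the statement to the standard locality, in the fiber variables, of the total symbol of a composition, exactly as in the classical pseudodifferential calculus. First, by surjectivity of the quantization $\Opb$ on total symbols (Appendix~\ref{appendix:bcalculus}; see also Proposition~\ref{prop:equivalent}), write $A = \Opb(a)$ and $B = \Opb(b)$ with $a \in S_\unif^s(\Tbstar X)$ and $b \in S_\unif^k(\Tbstar X)$; in the $\Psibf$ case $a$ and $b$ may be taken fiber-invariant, and then so may the total symbol $c$ of the composition, since fiber-invariance is preserved under composition. Thus $AB = \Opb(c)$ with $c \in S_\unif^{s+k}(\Tbstar X)$, and the composition formula of the calculus gives the asymptotic expansion
$$
c \sim \sum_\alpha \frac{1}{\alpha!}\, \pa_\xi^\alpha a \cdot D_x^\alpha b,
$$
where $x$ stands for the base b-coordinates and $\xi = (\xib,\taub,\etab)$ for the fiber variables; concretely this means that for every $M$ there is an $N = N(M)$ so that $c - \sum_{|\alpha| \le N} \tfrac{1}{\alpha!}\, \pa_\xi^\alpha a \, D_x^\alpha b$ obeys an estimate of the form \eqref{residualest} with exponent $-M$ \emph{over all of} $\Tbstar X$.

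Now suppose $q \notin \WFbf'(A)$, and pick a positive conic neighborhood $U$ of $q$ on which $a$ satisfies \eqref{residualest} for every $N$. Each $\pa_\xi^\alpha a$ then satisfies the same estimates on $U$ (differentiating in $\xi$ only improves the decay), so, since $D_x^\alpha b$ grows at most like $\ang{\xib,\taub,\etab}^k$, each term $\pa_\xi^\alpha a \cdot D_x^\alpha b$ of the finite sum obeys \eqref{residualest} on $U$ for all $N$. Adding the remainder estimate, $c$ itself satisfies \eqref{residualest} on $U$ for all $N$, that is, $q \notin \WFbf'(AB)$. Interchanging the roles of $a$ and $b$ — now using that $\pa_\xi^\alpha a$ grows at most like $\ang{\xib,\taub,\etab}^s$ while $D_x^\alpha b$ decays rapidly on $U$ — gives the implication $q \notin \WFbf'(B) \Rightarrow q \notin \WFbf'(AB)$. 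This proves $\WFbf'(AB) \subset \WFbf'(A) \cap \WFbf'(B)$; for the $\Psib(X)$ variant one runs the identical argument with $U$ replaced by its $\sim$-saturation, matching the definition of microsupport in that case. For a bounded operator family $\mathcal{B} = \{B_\ind : \ind \in \mathcal{I}\}$ (or a family in the first slot), all seminorms entering the above — of $a$, $b$, the finitely many $\pa_\xi^\alpha a\, D_x^\alpha b$, and the remainder — depend only on finitely many seminorms of $a$ and $b$, hence are uniform in $\ind$; so $U$ and the constants $C_{N,\alpha}$ can be chosen independently of $\ind$ and the conclusion persists verbatim.

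The one point that genuinely needs care, rather than routine bookkeeping, is that the composition formula with the stated locality and remainder estimates really is available in this uniform, noncompact b-setting — in particular that the extra ``residual operators'' built into the calculus (whose Schwartz kernels are rapidly off-diagonal and rapidly decaying, hence have empty microsupport, and which therefore cause no difficulty) are correctly accounted for. This is exactly what Appendix~\ref{appendix:bcalculus}, together with the translation of \cite[Chapter 18.3]{Ho:07} to our conventions, provides; granting it, the argument above is the standard one, and I expect no further obstacle.
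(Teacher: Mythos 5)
Your proof is correct and takes essentially the same approach as the paper, which offers no argument beyond the remark that the inclusion can be read off from the locality in $(x,\xi)$ of the total-symbol composition formula, just as in the usual calculus. Your write-up is simply a careful expansion of that one-line justification, including the uniform treatment of the remainder and of operator families.
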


There do exist elliptic operators in our calculus:
  \begin{lemma}
Given $p \in \Tbstar_{\pa X} X_\fib,$ $s \in \RR,$ and an open conic neighborhood
$U$ of $p,$ there exists $A \in \Psibf^s(X)$ that is elliptic at $p$
with $\WFbf' A\subset U.$
    \end{lemma}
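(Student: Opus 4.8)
The plan is to produce $A$ as the quantization of an explicit cutoff symbol built out of fiber-invariant coordinates. The first step is to fix the right coordinates: since $\fib = i^{-1}(\C\pa_t + \pa_\varphi)$ differentiates only in the base variables, it annihilates $r$, it annihilates the fiber variables $\xib,\taub,\etab$, and it annihilates $w \equiv \varphi - t/\C \bmod 2\pi\ZZ$ (because $(\C\pa_t + \pa_\varphi)(\varphi - t/\C) = \C\cdot(-1/\C) + 1 = 0$). Consequently any function of $(r, w, \xib, \taub, \etab)$ is fiber-invariant, is automatically $2\pi$-periodic in $\varphi$ (so that $\tOpb$ may be applied to it), and the fiber quotient $\Tbstar X_\fib$ is exactly coordinatized by $(r, w, \xib, \taub, \etab)$. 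In these coordinates the given point is a single point $p = (0, w_0, \zeta_0)$ with $\zeta_0 = (\xib_0, \taub_0, \etab_0) \neq 0$, and $U$ pulls back to an open, $\sim$-invariant, conic neighborhood $\widetilde U \subset \Tbstar X$ of the $\sim$-orbit of $p$.

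Next I would write down the symbol. With $\zeta = (\xib, \taub, \etab)$, set
$$
a(r, w, \zeta) = \ang{\zeta}^s\, \psi(r)\, \phi(w)\, g(|\zeta|)\, b\!\left(\tfrac{\zeta}{|\zeta|}\right),
$$
where $\psi \in \mathcal{C}_c^\infty$ equals $1$ near $r = 0$ (its compact support supplies the $(1+r)^{-N}$ decay required of $S^s_\unif$-symbols), $\phi$ is a smooth $2\pi$-periodic function equal to $1$ near $w_0$ and supported in a small arc about $w_0$, $g$ equals $1$ for $|\zeta| \geq 2$ and is supported in $|\zeta| \geq 1$, and $b$ is a degree-zero cutoff equal to $1$ near $\zeta_0/|\zeta_0|$ and supported in a small cone about it; the supports of $\psi, \phi, b$ are taken small enough that the resulting (closed, conic, $\sim$-invariant) region lies inside $\widetilde U$. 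The standard symbolic estimates in $\zeta$ hold, and differentiation in $(r, t, \varphi)$ only ever lands on $\psi$ (compactly supported) or $\phi$ (bounded with all derivatives, with $t$-differentiation acting only through $w$, hence uniformly in $t$); thus $a \in S^s_\unif(\Tbstar X)$, and since $a$ depends on the base variables only through $w$ we have $\fib(a) = 0$, i.e. $a \in S^s_{\F}(\Tbstar X)$. Set $A = \tOpb(a)$; by the lemma characterizing fiber-invariant quantizations, $A \in \Psibf^s(X)$, with principal symbol the class of $a$. (By Proposition~\ref{prop:equivalent} this is the same operator calculus as the one produced by $\Opb$, so ellipticity and microsupport may be read off from $a$ directly, up to the usual harmless $S^{s-1}$ ambiguity in the total symbol, which is supported in the same conic region.)

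Finally I would check ellipticity and microsupport. In a conic neighborhood of $p$ all four cutoffs are identically $1$, so $a = \ang{\zeta}^s$ there; taking $c = \ang{\zeta}^{-s}$ (times a cutoff to $|\zeta|$ large), which is constant-coefficient and hence lies in $S^{-s}_{\F}$, gives $ca \equiv 1$ near $p$, so $p \in \liptic A$. For the microsupport, $a$ vanishes identically off $\supp a$, so $\WFbf' A$ is contained in the image of $\overline{\supp a}$ in $\Sbstar X_\fib$; by our choice of small cutoff supports this image lies in $U$, i.e. $\WFbf' A \subset U$.

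The construction has no real analytic content — it is the fiber-invariant b-calculus version of the elementary fact that microlocal cutoffs exist. The only point requiring care is that fiber-invariance and $2\pi$-periodicity in $\varphi$ must be arranged \emph{simultaneously}, which is what forces the symbol to be built from the leaf coordinate $w = \varphi - t/\C$ rather than from $t$ and $\varphi$ separately; beyond that one just keeps track of which quantization ($\Opb$ versus $\tOpb$) is in play, which by Proposition~\ref{prop:equivalent} makes no difference to the conclusion.
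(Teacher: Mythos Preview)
Your proof is correct and takes essentially the same approach as the paper: both construct the symbol as a product of cutoffs in the fiber-invariant base coordinate $\varphi - t/\C$, in $r$, and in the (rescaled) fiber variables, then quantize via $\tOpb$. You are in fact more careful than the paper on two points—inserting the $\ang{\zeta}^s$ factor to obtain the stated order $s$ (the paper's written symbol is order $0$), and explicitly noting that building the base cutoff from $w = \varphi - t/\C$ simultaneously ensures fiber-invariance and $2\pi$-periodicity in $\varphi$.
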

    \begin{proof}
Letting $\chi$ be a cutoff function supported near the origin, we use
the notation $\alpha \equiv (\xib,\taub,\etab)$ and $\hat
\alpha=\alpha/\abs{\alpha}$, and set
$$
a= \chi(t-\C\varphi-\C \varphi_0) \chi(r) \chi(\lvert \hat \alpha-\hat
\alpha_0\rvert) \chi(\abs{\alpha}^{-1}).
$$
Then $A=\tOpb(a)$ satisfies our criteria, provided the support of
$\chi$ is sufficiently small.
\end{proof}

Global and microlocal elliptic parametrices exist in this calculus,
with the proof being the usual one (relying, as it does, only on the
properties of the symbol calculus---see, e.g., \cite[Proposition E.32]{DyZw:19}):
\begin{proposition}\label{prop:elliptic}
  Let $A \in \Psibf^\ell(X)$ and $B$ in $\Psibf^k(X)$ satisfy
  $$
\WFbf' A \subset \liptic B.
  $$
  Then there exists $Q,Q' \in \Psibf^{\ell-k}(X)$ such that
  $$
A=BQ+R=Q'B +R'
$$
with
$$
R,R'\in \Psibf^{-\infty}(X)
$$
and $\WFbf' Q\cup \WFbf' Q' \subset \WFbf' A$.

The same result holds if $A \in \Psib^\ell(X)$ and $B \in
\Psibf^k(X)$, now with $R, R'\in R,R'\in \Psib^{-\infty}(X)$.
\end{proposition}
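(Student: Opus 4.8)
The plan is to carry out the classical iterative construction of an elliptic parametrix, performed throughout at the level of symbols in the fiber-invariant class $S_\F$, so that membership in $\Psibf$ is preserved at every stage. Write $a=\sigmab{\ell}(A)$ and $b=\sigmab{k}(B)$. Since $\WFbf'A\subset\liptic B$, there is a fiber-invariant cutoff $\psi\in S_\F^0$ --- for instance a product of a function of $t-\C\varphi$ with a function of the homogeneous fiber variables --- that equals $1$ on a positive conic neighborhood of $\WFbf'A$ and is supported in a conic region on which $b$ satisfies an elliptic lower bound. Then $c_0\equiv\psi/b$ is a well-defined element of $S_\F^{-k}$: a quotient of fiber-invariant symbols away from the zeros of the denominator is again fiber-invariant and inherits the uniform estimates \eqref{Sunif}, since $b\in S_\F^k$. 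Set $Q_0=\Opb(c_0\,a)\in\Psibf^{\ell-k}(X)$; by construction $\WFbf'Q_0\subset\WFbf'A$.

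By the composition formula of the calculus, $BQ_0-A\in\Psibf^{\ell-1}(X)$ (principal symbols agreeing on $\WFbf'A$, both terms smoothing off it), with $\WFbf'(BQ_0-A)\subset\WFbf'A$ by the locality of the total symbol of a composition; moreover the error is again fiber-invariant, since $\F$ is a constant-coefficient vector field in $(t,\varphi)$ and hence commutes with every derivative appearing in the composition formula while annihilating both factors. Iterating, one obtains $Q_j\in\Psibf^{\ell-k-j}(X)$ with $\WFbf'Q_j\subset\WFbf'A$ and $B(Q_0+\dots+Q_j)-A\in\Psibf^{\ell-j-1}(X)$. A Borel-type asymptotic summation in $S_\F$ --- available because $S_\F$ is a Fr\'echet space and the standard summation respects all the seminorms, in particular the uniform-in-$t$ and rapidly-$r$-decaying structure --- then yields $Q\in\Psibf^{\ell-k}(X)$ with $Q\sim\sum_j Q_j$, $\WFbf'Q\subset\WFbf'A$, and $R\equiv BQ-A\in\Psibf^{-\infty}(X)$. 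The left parametrix $Q'$ is produced by the same argument with the roles of the two sides reversed, starting from $Q'_0=\Opb(a\,c_0)$ and using $Q'_0B-A\in\Psibf^{\ell-1}(X)$; this gives $Q'\in\Psibf^{\ell-k}(X)$ with $\WFbf'Q'\subset\WFbf'A$ and $R'\equiv Q'B-A\in\Psibf^{-\infty}(X)$.

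For the final assertion, if $A\in\Psib^\ell(X)$ is not assumed fiber-invariant while $B\in\Psibf^k(X)$, the identical construction applies with $S_\F$ replaced by $S_\unif$ in the symbols of $Q$ and $Q'$: the inverse symbol $c_0=\psi/b$ is still available --- now with the microsupport estimate interpreted on the $\sim$-equivalence class of the neighborhood, as in the definition of $\WFbf'$ for $\Psib$ --- every composition error lies in $\Psib$, and the asymptotic summation is the usual one in $S_\unif$. This produces $Q,Q'\in\Psib^{\ell-k}(X)$ and $R,R'\in\Psib^{-\infty}(X)$.

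The parametrix bookkeeping above is entirely routine; the only point requiring attention is that \emph{every} symbol that appears --- the inverse $c_0$, the cutoffs, the iterated correction terms, and their asymptotic sum --- remains simultaneously fiber-invariant and uniform as $\abs t\to\infty$ with rapid decay in $r$. Fiber-invariance is preserved because $\F$ commutes with $\pa_{\xib},\pa_{\taub},\pa_{\etab}$ and, being constant-coefficient, with $\pa_t$ and $\pa_\varphi$, so no step of the composition or summation can break it; uniformity is preserved because $S_\F\subset S_\unif$ and the construction is purely symbolic, generating only sums, products, and derivatives of symbols, each of which respects \eqref{Sunif}. Granting these stability statements, the remainder of the argument is verbatim the usual one, e.g.\ \cite[Proposition~E.32]{DyZw:19}.
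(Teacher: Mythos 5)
Your argument is exactly the standard iterative symbolic parametrix construction that the paper itself invokes without further detail (it simply cites \cite[Proposition E.32]{DyZw:19} and notes that the proof relies only on the symbol calculus), and the extra care you take---that the inverse symbol, the cutoffs, the composition errors, and the asymptotic sum all remain fiber-invariant and uniform---is precisely the point that makes the usual proof go through in $\Psibf$. The proposal is correct and matches the paper's approach.
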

Note in particular, taking $A=\Id$, we can invert a globally elliptic
operator modulo a residual operator.
  
As usual, elliptic elements of the calculus can be used to test for
Sobolev regularity:
  \begin{proposition}
Let $A\in \Psibf^s(X)$ be elliptic.  Then \begin{align*} u \in
                                            \Hb^s(X)  &\text{ iff }A u
\in L^2\\ u \in \HbHone^s(X) &\text{ iff }
A u \in \Hone\\ u \in \HbHonestar^s(X) &\text{ iff }A u \in \Hmone.\end{align*}
\end{proposition}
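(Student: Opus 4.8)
The plan is to prove each of the three equivalences by the standard microlocal parametrix argument, using that an elliptic $A$ has a left parametrix $Q'$ with $Q'A = \Id + R$ for a smoothing $R \in \Psibf^{-\infty}(X)$, together with the mapping properties of $0$-th order fiber-invariant operators on $\Hone$ and $\Hmone$ (Lemma~\ref{lemma:mappingprop} and Corollary~\ref{cor:dualmappingprop}), as well as the boundedness of the uniform calculus on $L^2$ from the earlier Proposition on $\Psib^\star(X)$.

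First I would treat the forward direction, which is immediate: if $u \in \Hb^s(X)$ then by definition $Bu \in L^2$ for every $B \in \Psibf^s(X)$, in particular for $B = A$; similarly for the $\Hone$- and $\Hmone$-based variants, directly from the definitions of $\HbHone^s$ and $\HbHonestar^s$. For the reverse direction, suppose $Au \in L^2$. Using Proposition~\ref{prop:elliptic} with the roles chosen so that $A$ is elliptic (so $\WFbf' B \subset \Ell A$ for the desired test operator $B$, in fact for $B = \Id$ when $A$ is globally elliptic), we obtain $Q' \in \Psibf^{-s}(X)$ and $R' \in \Psibf^{-\infty}(X)$ with
\[
Q'A = \Id + R'.
\]
Then for any $B \in \Psibf^s(X)$ we write $Bu = B Q' A u - B R' u$. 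Here $BQ' \in \Psibf^0(X)$ and $BR' \in \Psibf^{-\infty}(X)$. Since $Au \in L^2$ and $\Psibf^0(X) \subset \Psib^0(X)$ is $L^2$-bounded, $BQ'Au \in L^2$; and $BR' u \in L^2$ because $u$ is a priori a distribution of some finite order and a smoothing operator in the uniform calculus maps it into $L^2$ (one should note here that $u$ is assumed to lie in one of the ambient Hilbert spaces, so this mapping statement is unproblematic). Hence $Bu \in L^2$ for all such $B$, i.e.\ $u \in \Hb^s(X)$.

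For the $\HbHone^s$ statement I would run the identical argument but land in $\Hone$ instead of $L^2$: if $Au \in \Hone$, then $Bu = BQ'Au - BR'u$ with $BQ' \in \Psibf^0(X)$, which is bounded on $\Hone$ by Lemma~\ref{lemma:mappingprop}, so $BQ'Au \in \Hone$; and $BR'u \in \Hone$ since $BR'$ is uniformly smoothing. The $\HbHonestar^s$ case is dual, using Corollary~\ref{cor:dualmappingprop} for the $\Psibf^0$-boundedness on $\Hmone$. The main obstacle here is not the algebra, which is routine, but rather the bookkeeping regarding the a priori regularity of $u$: one must make sure that the negative-order Sobolev spaces are defined so that applying a uniformly smoothing operator (which involves the residual operators of the calculus, with their Schwartz decay in the transverse and noncompact directions) actually returns an element of $L^2$, $\Hone$, or $\Hmone$ respectively. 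This follows from the structure of residual operators recorded earlier together with the duality identifications of the negative-order spaces, but it is the one point where the noncompactness of $X$ and the uniform nature of the calculus have to be invoked with a little care rather than quoted verbatim from the compact b-calculus.
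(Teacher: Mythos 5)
Your argument is correct and is essentially the paper's own proof: the paper likewise deduces the result directly from the elliptic parametrix construction of Proposition~\ref{prop:elliptic} together with boundedness of nonpositive-order elements of $\Psibf(X)$ on $L^2$, $\Hone$, and $\Hmone$. Your additional remark about the a priori regularity of $u$ and the mapping properties of the residual term is a reasonable point of care that the paper leaves implicit.
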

The proof follows directly from Proposition~\ref{prop:elliptic} and
from boundedness of nonpositive-order elements of the calculus on
$L^2$, $\Hone$, $\Hmone$.

\begin{definition}
  Let $u \in \CmI(X)$ and $q \in \Tbstar X_\fib.$
  We say $q \notin \WFbh^su$ (resp.\ $q \notin \WFbhstar^s u$, $q \notin \WFb^s u$)
        if there is an $A \in \Psibf^s,$ elliptic at $q,$ such that $Au \in
  \Hone$ (resp. $Au \in \Hmone$, $Au \in L^2$).

We say $q \notin \WFbh u$ (resp. $q \notin \WFbhstar u$, $q \notin \WFb u$)
        if there is an $A \in \Psibf^0,$ elliptic at $q,$ such that $Au \in
  \HbHone^\infty(X)$ (resp.\ $Au \in \HbHonestar^\infty$, $Au \in H_b^\infty$).
  We occasionally use the convention that $\WF_\bullet^\infty$ means the same thing
  as $\WF_\bullet$ with no superscript.
\end{definition}

Standard ellipticity arguments show that with the definitions above,
$$
\WF_\bullet^\infty u=\overline{\bigcup_{s \in \RR} \WF_\bullet^su },
$$
hence we will not need to deal separately with $s=\infty$ in our
results below.

We have an inclusion of wavefront sets based on the corresponding
inclusions of Sobolev spaces:
\begin{lemma}\label{lemma:WFinclusions}
      For $u \in \Hone,$ and $m \in \RR\cup \{+\infty\},$
\begin{equation}\label{WF1}
\WFb^{m+1} (u) \subset \WFbh^m(u)\subset \WFb^{m+1} (r^{-1} u).
\end{equation}
      For $u \in \Hmone,$ and $m \in \RR \cup \{+\infty\},$
\begin{equation}\label{WF2}
\WFb^m(ru) \subset\WFbhstar^{m+1}(u)\subset \WFb^m(u) 
\end{equation}
    \end{lemma}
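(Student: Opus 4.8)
The plan is to deduce both chains of inclusions directly from the Sobolev-space inclusions of Lemma~\ref{lemma:spaces1} together with the commutator results of Lemmas~\ref{lemma:easycomm} and~\ref{lemma:indicial}, working microlocally via elliptic test operators. Recall that $\WFbh^m u$ is the complement of the set of $q$ for which there is $A\in\Psibf^m$ elliptic at $q$ with $Au\in\Hone$, and similarly for $\WFb^{m+1}u$ with $L^2$ in place of $\Hone$; all test operators live in the fiber-invariant calculus, so the subtlety of ``microsupport as a set'' does not arise, and we may reason one $q\in\Tbstar X_\fib$ at a time.

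For \eqref{WF1}: suppose $q\notin\WFbh^m(u)$, so there is $A\in\Psibf^m$, elliptic at $q$, with $Au\in\Hone$. Using a microlocal elliptic parametrix (Proposition~\ref{prop:elliptic}), we may replace $A$ by an operator in $\Psibf^{m+1}$ elliptic at $q$: concretely, pick $\Lambda\in\Psibf^{1}$ elliptic everywhere (e.g.\ $\langle D\rangle$ in the fiber-invariant calculus) with $\WFbf'[\Lambda,\cdot]$ controlled; then $\Lambda A\in\Psibf^{m+1}$ is elliptic at $q$, and $\Lambda(Au)\in L^2$ since $\Lambda$ maps $\Hone\subset H_b^1$ into $L^2$ (boundedness of the uniform calculus, Corollary~\ref{cor:uniformbounded}). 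Modulo a residual (hence $H_b^\infty$-mapping) error, $(\Lambda A)u\in L^2$, so $q\notin\WFb^{m+1}(u)$, giving the first inclusion. For the second, suppose $q\notin\WFbh^m(u)$ with $A\in\Psibf^m$ elliptic at $q$, $Au\in\Hone$; by Lemma~\ref{lemma:spaces1}, $\Hone\subset H_b^1$, but we want to test $r^{-1}u$ against something in $\Psibf^{m+1}$. Write $A(r^{-1}u)=r^{-1}(rAr^{-1})u=r^{-1}A'u$ where $A'=rAr^{-1}\in\Psib^m$ has the same principal symbol as $A$ by \eqref{i2} (and is fiber-invariant, since $r$ is); so $A'$ is also elliptic at $q$ and $A'u\in\Hone\subset r^{-1}H_b^{1,1}$... more carefully: we use $\dot{\Hone}(\overline\Omega)\subset\dot H_b^1(\overline\Omega)$ and the containment $\Hone\subset H_b^{1}$ near $r=0$ only in the form that already appears in Lemma~\ref{lemma:spaces1}; the cleanest route is to note $Au\in\Hone$ implies $r^{-1}(Au)\in H_b^{1}$ is false in general, so instead we use $\Hone\subset H_b^1$ to get $\Lambda A u\in L^2$ and then $\Lambda A(r^{-1}u)=\Lambda r^{-1}A u + \Lambda[A,r^{-1}]u$; by \eqref{i1}/\eqref{rinvcomm}, $[A,r^{-1}]=r^{-1}G$ with $G\in\Psib^{m-1}$, so $\Lambda[A,r^{-1}]u=\Lambda r^{-1}Gu$, and $Gu\in\Hone$ near $q$ (elliptic regularity, since $\WFbf'G\subset\WFbf'A$). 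Thus $\Lambda A(r^{-1}u)\in r^{-1}\Hone\cdot(\text{l.o.t.})$... the bookkeeping here is where care is needed, and I expect this to be the main obstacle.

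For \eqref{WF2}: these follow from \eqref{WF1} by duality. If $u\in\Hmone$, then $u\in H_b^{-1,-1}$ by Lemma~\ref{lemma:spaces1}, i.e.\ $ru\in H_b^{-1}$ locally; and $H_b^{-1}\subset\Hone^*$ near $r=0$ is the dual of $\Hone\subset H_b^1$. Concretely, $q\notin\WFbhstar^{m+1}(u)$ means there is $A\in\Psibf^{m+1}$ elliptic at $q$ with $Au\in\Hmone$; then $A(ru)=rAu+[A,r]u=r(Au)+Gru$ for some $G\in\Psib^m$ with $\WFbf'G\subset\WFbf'A$, using \eqref{i1}; since $r(Au)\in r\Hmone\subset H_b^{-m}$... again the weight-shifting uses Lemma~\ref{lemma:spaces1} exactly as stated. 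The reverse inclusion $\WFbhstar^{m+1}(u)\subset\WFb^m(u)$ is dual to $\WFb^{m+1}(u)\subset\WFbh^m(u)$ from \eqref{WF1}. Throughout, the two genuinely nontrivial ingredients are (i) that the half-order shift between $\Hone$- and $L^2$-based wavefront sets is exactly the failure of $\Hone$ to be an $H_b^1$ space — captured quantitatively by the sandwich $\dot H_b^{1,1}\subset\dot\Hone\subset\dot H_b^1$ of Lemma~\ref{lemma:spaces1} — and (ii) that conjugation by $r$ preserves the fiber-invariant calculus and principal symbols (Lemma~\ref{lemma:indicial}), so that microlocal ellipticity is unaffected. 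I expect the hardest part to be verifying that the residual and lower-order commutator errors generated in these manipulations do not enlarge the wavefront set, i.e.\ that $\WFbf'$ of each error operator is contained in the complement of the conic neighborhood where $A$ is elliptic; this is the standard elliptic-regularity bootstrap but must be run carefully in the fiber-quotiented setting.
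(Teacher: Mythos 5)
Your first inclusion in \eqref{WF1} is correct and matches the paper's argument (compose with a first-order elliptic fiber-invariant operator and use $\Hone\subset H_b^1$). The genuine gap is in the second inclusion, $\WFbh^m(u)\subset \WFb^{m+1}(r^{-1}u)$: you run the contrapositive in the wrong direction. You start from $q\notin\WFbh^m(u)$ and try to deduce regularity of $r^{-1}u$, which would prove the \emph{reverse} inclusion $\WFb^{m+1}(r^{-1}u)\subset\WFbh^m(u)$ --- and, as you yourself notice midway, this cannot work, because $Au\in\Hone$ gives no control on $r^{-1}Au$: that would require $\Hone\subset H_b^{1,1}$, whereas Lemma~\ref{lemma:spaces1} only gives the sandwich $\dot H_b^{1,1}\subset\dot\Hone\subset\dot H_b^1$. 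The correct argument starts from the hypothesis $q\notin\WFb^{m+1}(r^{-1}u)$: there is $A\in\Psibf^{m+1}$ elliptic at $q$ with $A(r^{-1}u)\in L^2$, hence $(rAr^{-1})u\in rL^2$ with $rAr^{-1}$ still elliptic at $q$ by \eqref{i2}; then for $B\in\Psibf^m$ elliptic at $q$ and microsupported in the elliptic set of $A$, the parametrix construction of Proposition~\ref{prop:elliptic} gives $Bu\in r H_b^1\subset\Hone$ (now using the \emph{other} half of the sandwich in Lemma~\ref{lemma:spaces1}), so $q\notin\WFbh^m(u)$. The weight $r$ must enter on the side of the hypothesis, not the conclusion; your attempt stalls precisely because it is placed on the wrong side.

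For \eqref{WF2} the paper does not dualize: it repeats the same direct manipulations, testing $ru$ with an order-$m$ operator built from the order-$(m+1)$ operator witnessing $q\notin\WFbhstar^{m+1}(u)$ and using $r\Hmone\subset H_b^{-1,0}$ from Lemma~\ref{lemma:spaces1}, and conversely. Your duality sketch trails off before closing the commutator bookkeeping: the term $[A,r]u=Gru$ with $G\in\Psib^{m}$ is of the same order as the quantity you are trying to control, so it must be handled by a parametrix/order-reduction step that you do not supply. As written, neither the second inclusion of \eqref{WF1} nor \eqref{WF2} is established; both are repairable by the scheme above, but the proposal does not yet prove the lemma.
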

    \begin{proof}
If a point $q \notin \WFbh^m(u),$ there exists $A \in \Psibf^m,$
elliptic at $q,$ such that $A u \in \Hone.$  Hence by
Lemma~\ref{lemma:spaces1}, $Au \in \Hb^1,$ and we obtain
$q \notin \WFb^{m+1}u,$ which proves the first inclusion in \eqref{WF1}.
If $q \notin \WFb^{m+1} (r^{-1} u)$ then there exists $A \in
\Psibf^{m+1}$ with $r^{-1} A u \in L^2,$ hence for $B \in \Psibf^m$
microsupported on the elliptic set of $A$ and elliptic at $q,$ we have
$B u \in r \Hb^1 \subset \Hone$ (again by Lemma~\ref{lemma:spaces1}) hence $q \notin \WFbh^m(u)$ and the
second inclusion follows.

The inclusions in \eqref{WF2} are proved similarly.
      \end{proof}

\section{Microlocal elliptic regularity}\label{sec:elliptic}

In this section we prove a microlocal elliptic regularity statement
for the weighted edge operator $P$. This says, quantitatively, that away from the
wavefront set of the inhomogeneity, the wavefront set of a solution to
$Pu = f$ is contained in the compressed b-cotangent bundle defined in
Section~\ref{sectionCompressedBundle}. 

Recall that $$P = -\partial_t^2 + r^{-2}(r\partial_r)^2 + r^{-2} (\C \pa_t + \pa_\varphi)^2 + \pert$$
with
$$ \pert = f_1 D_t + f_2 D_\varphi + f_3rD_r + f_4 $$
where $f_\bullet \in \mathcal{C}_u^\infty$.

\begin{lemma}\label{lemma:ubddtoQ}
  Take $\Gamma \subset U \subset \Sbstar X_\fib$ with $\Gamma $ closed and
  $U$ open. Let $\mathcal{B}\subset \Psibf^k$ be a family
  uniformly bounded in $\Psib^{k}$ with
  $\WFbf'(\mathcal{B}) \subset \Gamma $.  Let $Q \in \Psibf^k$ be 
  elliptic on $\Gamma$ with $\WFbf'(Q) \subset U$. 
  
  \begin{itemize}
  	\item[(i)] If $u \in \Hone$ and	$\WFbh^k(u) \cap U = \emptyset$ then there exists a
  	constant $C_1 >0$ such that
  	$$
  	\|Bu\|_\Hone \le C_1(\|u\|_\Hone + \|Qu\|_\Hone)
  	$$
  	for $B\in \mathcal{B}$.
  	
  	\item[(ii)] If $ u\in \Hone^* $ and $\WFbhstar^k(u) \cap U = \emptyset$ then there exists a constant $C_2 > 0$ such that
  		$$
  		\|Bu\|_{\Hone^*} \le C_2 (\|u\|_{\Hone^*} + \|Qu\|_{\Hone^*})
  		$$
  \end{itemize}
\end{lemma}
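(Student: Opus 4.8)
The plan is to derive both estimates from a uniform microlocal elliptic parametrix for $Q$ along $\Gamma$, together with the mapping bounds for zeroth-order fiber-invariant operators in Lemma~\ref{lemma:mappingprop} and Corollary~\ref{cor:dualmappingprop}.

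First I would build the parametrix. Since $Q$ is elliptic on $\Gamma\supset\WFbf'(\mathcal{B})$, Proposition~\ref{prop:elliptic} applies to the pair $(B_\ind,Q)$ in the roles of the operators called $A$ and $B$ there; using the second form $A=Q'B+R'$, we obtain, for every $\ind\in\mathcal{I}$,
$$
B_\ind=G_\ind Q+S_\ind,\qquad G_\ind\in\Psibf^{0}(X),\quad S_\ind\in\Psibf^{-\infty}(X),\quad \WFbf' G_\ind\subset\WFbf' B_\ind\subset\Gamma,
$$
the order of $G_\ind$ being $k-k=0$. The point demanding attention here is that the construction is \emph{uniform} in the parameter: because $\mathcal{B}$ is uniformly bounded in $\Psib^{k}$ while $Q$ is fixed, with a fixed symbolic parametrix on $\Gamma$, the usual iterative symbol construction behind Proposition~\ref{prop:elliptic} produces $\{G_\ind\}$ bounded in $\Psibf^{0}$ and $\{S_\ind\}$ lying uniformly in $\Psibf^{-\infty}$, the uniform seminorm estimates---in particular as $\abs{t}\to\infty$---being preserved at each stage. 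I would record this as a family version of Proposition~\ref{prop:elliptic}.

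For part (i), note first that $\WFbh^{k}(u)\cap U=\emptyset$ together with $\WFbf'(Q)\subset U$ forces $Qu\in\Hone$, so that the right-hand side below is finite; this is the standard microlocal elliptic regularity of the calculus (cover $\WFbf'(Q)$ by elliptic sets of order-$k$ fiber-invariant operators that carry $u$ into $\Hone$ and patch via Proposition~\ref{prop:elliptic} and Lemma~\ref{lemma:mappingprop}; or, equivalently, insert a uniformly bounded regularizing family $\Lambda_\delta\to\Id$, with $\Lambda_\delta\in\Psibf^{-\infty}$ for $\delta>0$, and pass to $\delta\to0$ at the end). Granting this, apply the parametrix identity to $u$ and estimate termwise with Lemma~\ref{lemma:mappingprop}, the constants being independent of $\ind$ since $\{G_\ind\}$ and $\{S_\ind\}\subset\Psibf^{0}$ are uniformly bounded, to get
$$
\|B_\ind u\|_\Hone\le\|G_\ind(Qu)\|_\Hone+\|S_\ind u\|_\Hone\le C\,\|Qu\|_\Hone+C\,\|u\|_\Hone,
$$
which is (i). Part (ii) is the identical argument carried out in $\Hone^{*}$: the operator identity $B_\ind=G_\ind Q+S_\ind$ is unchanged, $Qu\in\Hone^{*}$ follows from $\WFbhstar^{k}(u)\cap U=\emptyset$ in the same fashion, and the termwise bounds are now supplied by Corollary~\ref{cor:dualmappingprop} in place of Lemma~\ref{lemma:mappingprop}. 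The only genuine obstacle I anticipate is the uniformity bookkeeping in the parametrix step---uniformity both over the parameter $\ind$ and as $\abs{t}\to\infty$; apart from that this is precisely the classical elliptic \emph{a priori} estimate.
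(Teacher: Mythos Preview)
Your argument is correct and follows the same strategy as the paper: an elliptic parametrix for $Q$ on $\Gamma$ combined with Lemma~\ref{lemma:mappingprop} and Corollary~\ref{cor:dualmappingprop}. The one organizational difference---which answers precisely the uniformity concern you flag---is that the paper builds a \emph{single} parametrix $G\in\Psibf^{-k}$ with $GQ=\Id+E$ and $\WFbf'(E)\cap\Gamma=\emptyset$, and then writes $B_\ind u=B_\ind G(Qu)-B_\ind Eu$; since $G$ and $E$ are fixed while $\{B_\ind\}$ is uniformly bounded with microsupport in $\Gamma$, the families $\{B_\ind G\}\subset\Psibf^{0}$ and $\{B_\ind E\}\subset\Psibf^{-\infty}$ are automatically uniformly bounded, so no $\ind$-dependent parametrix construction (and hence no uniformity bookkeeping) is needed.
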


\begin{proof}
  We begin by proving statement $(i)$. Since $\Gamma$ is closed, it
  follows from Proposition~\ref{prop:elliptic} that
    we can find a microlocal parametrix $G$ for $Q$ so that
  $GQ = \Id + E $ where $E \in \Psibf^0(X)$,
  $\WFbf'(E) \cap \Gamma = \emptyset$. For each $B \in \mathcal{B}$,
  $BE \in \Psibf^{-\infty}(X)$ uniformly since
  $\WFbf'(B)\cap \WFbf'(E) = \emptyset$. Thus
	\begin{align*}
		\|Bu\|_\Hone &= \|B(GQ-E)u\|_\Hone \\
			&\le \|BGQu\|_\Hone + \|BEu\|_\Hone \\
			&\le C_1(\|u\|_\Hone + \|Qu\|_\Hone)
	\end{align*}
	where in the last line we bound $\|BGQu\|_\Hone$ using Lemma
        \ref{lemma:mappingprop} and the second using the fact
        that $BE \in \Psibf^{-\infty}$ uniformly.
	
	The proof of statement $(ii)$ is identical but we use Corollary \ref{cor:dualmappingprop} instead of Lemma \ref{lemma:mappingprop}.
\end{proof}

The following lemma relates our $\Hone$-based wavefront set to $L^2$
bounds.   Note that the test operator $A$ is merely uniform, but the
operator $G$ estimating it is fiber-invariant.

\begin{lemma}\label{lemma:wavefrontHvsL2}
	Take $\Gamma \subset U \subset \Sbstar X_\fib$ with $\Gamma$ closed
        and $U$ open.  Let $u \in \Hone$ and let $A \in
        \Psib^{m+1}(X)$ for some $m \in \mathbb{R}$. Assume $\WFbf'(A)
        \subset \Gamma$ and $\WFbh^m u \cap U = \emptyset$. Then for
        any $G \in \Psibf^m$ with $\WFbf' G \subset U$ and $G$ elliptic
        on $\Gamma$,
there exist $C >0$ and
        $R \in \Psib^{-\infty}$ such that

	$$\|Au\|_{L^2} \le C \|Gu\|_\Hone+\|Ru \|_{L^2}.$$
\end{lemma}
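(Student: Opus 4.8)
The plan is to reduce everything to an elliptic division of $A$ by $G$ together with the mapping property of order-one $b$-operators out of $\Hone$. First I would apply Proposition~\ref{prop:elliptic} in its mixed form — the case $A \in \Psib^\ell(X)$, $B \in \Psibf^k(X)$ — with $\ell = m+1$, $k = m$ and $B = G$. Since $G$ is elliptic on $\Gamma$ and $\WFbf'(A) \subset \Gamma$, we have $\WFbf'(A) \subset \liptic G$, so the proposition supplies $Q' \in \Psib^{1}(X)$ and $R \in \Psib^{-\infty}(X)$ with $A = Q'G + R$. (I will use only this factorization; the orders are the one bookkeeping point here — $\ell - k = 1$ is exactly what makes $Q'$ first order.) Then $Au = Q'(Gu) + Ru$, so $R$ is the required residual operator and $\|Au\|_{L^2} \le \|Q'(Gu)\|_{L^2} + \|Ru\|_{L^2}$, the last term being finite since $R \in \Psib^0(X)$ is bounded on $L^2$ and $\Hone \subset L^2$.

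It then remains to estimate $\|Q'(Gu)\|_{L^2}$ by $C\|Gu\|_\Hone$. If $Gu \notin \Hone$ the asserted right-hand side is infinite and there is nothing to prove, so I would assume $Gu \in \Hone$. Since $\HbHone^0(X) = \Hone$ (immediate from the definition together with Lemma~\ref{lemma:mappingprop}) and $Q' \in \Psib^1(X)$, Corollary~\ref{cor:uniformbounded} with $s=1$ gives $\|Q'(Gu)\|_{L^2} \le C\|Gu\|_{\HbHone^{0}} = C\|Gu\|_\Hone$, which finishes the estimate. The only point where the distinction ``merely uniform'' versus ``fiber-invariant'' is used is here: $A$, $Q'$, $R$ are allowed to be non-fiber-invariant precisely because they act \emph{out of} $\Hone$-type spaces into $L^2$, whereas $G$ must be fiber-invariant so that $Gu$ can sensibly land in $\Hone$ at all.

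To see that the estimate is not vacuous — i.e.\ that $Gu$ really does lie in $\Hone$ under the stated hypotheses, so that one may in fact conclude $Au \in L^2$ with the displayed bound — I would observe that $\WFbf'(G) \subset U$ and $\WFbh^m(u) \cap U = \emptyset$ force $\WFbh^m(u) \cap \WFbf'(G) = \emptyset$, and then invoke microlocal elliptic regularity, exactly as in the regularity-testing statements of this section (again through the definition of $\WFbh^m$ and Proposition~\ref{prop:elliptic}), to get $Gu \in \Hone$. I expect this last point to be the only real source of friction: turning a microlocal conclusion into global membership relies on the uniformity built into the calculus. It is, however, inessential to the inequality itself, which holds vacuously otherwise, and the requisite uniform elliptic machinery is already in place from the development of this section, so the genuine mathematical content of the lemma is just the one-line elliptic division carried out in the first paragraph.
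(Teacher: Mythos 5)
Your proposal is correct and follows essentially the same route as the paper: factor $A = Q'G + R$ via the mixed elliptic parametrix of Proposition~\ref{prop:elliptic} with $Q' \in \Psib^1(X)$, $R \in \Psib^{-\infty}(X)$, then bound $\|Q'(Gu)\|_{L^2}$ by $C\|Gu\|_{\Hone}$ using Corollary~\ref{cor:uniformbounded} (the paper invokes exactly this corollary, with the identification $\HbHone^0 = \Hone$ left implicit). The additional remarks on why $Gu \in \Hone$ under the hypotheses are correct but not needed for the inequality itself, as you note.
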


	\begin{proof}
	 Fixing $G
                \in \Psibf^m(X)$ elliptic on $\Gamma$ and microsupported
                in $U$, the elliptic parametrix construction
                (Proposition~\ref{prop:elliptic})
                  gives the
                factorization
                $$
A=TG+R
                $$
                with $T \in \Psib^1$, $R \in \Psib^{-\infty}$.  We
                then estimate, using Corollary~\ref{cor:uniformbounded},
                $$
\|T G u \|\leq C \|Gu\|_{\Hone}
$$
and the desired estimate follows.

	\end{proof}

In our proof of estimates on the Dirichlet form below, we will need to control many terms which share similar structures. To streamline the argument, we provide the following lemma which will allow us to immediately bound the necessary terms. 

\begin{lemma}\label{lemma:movingderivatives}
  Take $\Gamma \subset U \subset \Sbstar X_\fib $ with $\Gamma $ closed and
  $U$ open. Let $\mathcal{A}, \mathcal{B}$ be families of operators
  uniformly bounded in $\Psibf^{k}$ and $\Psibf^{k'}$,
  respectively. Assume
  $\WFbf'(\mathcal{A}), \WFbf'(\mathcal{B}) \subset \Gamma.$ Let
  $E, F$ be weighted differential operators of respective orders $m,m' \in \{0, 1\}$ lying in
  $\Diffbf+ \CIf r^{-1}\fib + \CIf D_r$.
Set $$ n = \frac{m+m'+k +k'-2}{2}.$$ Assume
  $u \in \Hone$ and $\WFbh^n u \cap U = \emptyset$. Then for any
  $Q \in \Psibf^n$ elliptic on $\Gamma$ such that $\WFbf'(Q) \subset U$,
  there exists a constant $C >0$ such that
		$$ |\la EAu, FBu \ra | \le C \big(  \|u\|_\Hone^2 + \|Qu\|_\Hone^2 \big) ,$$
	for $A \in \mathcal{A}, B \in \mathcal{B}$.
\end{lemma}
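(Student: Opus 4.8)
The plan is to reduce the bilinear pairing $\la EAu, FBu\ra$ to an inner product of two quantities, each lying in $L^2$ with norm controlled by $\|u\|_\Hone + \|Qu\|_\Hone$, and then apply Cauchy--Schwarz. The essential point is a bookkeeping one: each of $E$ and $F$ has the shape (weighted $b$-differential operator) $+\ \CIf\cdot r^{-1}\fib\ +\ \CIf\cdot D_r$, so when composed with a $b$-pseudodifferential operator of order $k$ (resp.\ $k'$) and applied to $u\in\Hone$, the output is an $L^2$ function, but only because the two ``bad'' pieces $r^{-1}\fib$ and $D_r$ are exactly the two singular derivatives appearing in the $\Hone$ norm. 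Concretely, I would first write $EA = \sum (\text{term in } \Psibf^{k+m}) + (\text{term involving } r^{-1}\fib) + (\text{term involving } D_r)$, commuting the $\Psibf$ factors past the fiber-invariant coefficients in $\CIf$ and past $r^{-1}\fib$ (legitimate since everything in sight commutes with $\fib$; use Lemma~\ref{lemma:easycomm} to move $D_r$ past the pseudodifferential factor, at the cost of lower-order terms). After these commutations $EAu$ is a finite sum of terms of the form $\fib r^{-1} A' u$, $D_r A'' u$, and $A''' u$, with $A',A'',A'''$ bounded in $\Psibf$ of order $k+m-1$, $k+m-1$, $k+m$ respectively (the order drop by one in the first two cases because $r^{-1}\fib$ and $D_r$ are order-one operators that we are ``charging'' to the $\Hone$ norm rather than to the $L^2$ bound). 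The analogous decomposition holds for $FBu$.

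Next I would observe that each individual summand of $EAu$ is, by construction, of the form $S u'$ where $u' = r^{-1}\fib A' u$ or $D_r A'' u$ or $A''' u$, and in every case $u'\in L^2$ with $\|u'\|_{L^2}\le C\|A^\sharp u\|_\Hone$ for a suitable $A^\sharp\in\Psibf^{(k+m-1)\vee(k+m)}$ bounded with microsupport in $\Gamma$ --- indeed this is exactly the statement that fiber-invariant operators of nonpositive order are bounded on $\Hone$ (Lemma~\ref{lemma:mappingprop}), applied after noting that $r^{-1}\fib A' u$ is one of the five terms in $\|A'u\|_\Hone^2$ and $D_r A'' u$ is another. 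Tracking orders: to bound $\|EAu\|_{L^2}$ I need $\|A^\sharp u\|_\Hone$ with $A^\sharp$ of order at most $\tfrac{k+m+k'+m'-2}{2}$... but that is not automatic, since $k+m-1$ need not be $\le n$. Here the hypothesis $\WFbh^n u\cap U=\emptyset$ together with $\WFbf'(\mathcal{A})\subset\Gamma\subset U$ is what saves us: by Lemma~\ref{lemma:ubddtoQ}(i), for any family bounded in $\Psibf^n$ with microsupport in $\Gamma$ we have $\|(\cdot)u\|_\Hone\le C(\|u\|_\Hone+\|Qu\|_\Hone)$, and by composing $A^\sharp$ (which has microsupport in $\Gamma$) with an order $n-(\text{ord }A^\sharp)$ elliptic-at-$\Gamma$ operator and its parametrix we may always arrange to estimate in terms of an order-$n$ operator. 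So $\|EAu\|_{L^2}\le C(\|u\|_\Hone+\|Qu\|_\Hone)$ and likewise $\|FBu\|_{L^2}\le C(\|u\|_\Hone+\|Qu\|_\Hone)$, whence $|\la EAu,FBu\ra|\le \|EAu\|_{L^2}\|FBu\|_{L^2}\le C(\|u\|_\Hone^2+\|Qu\|_\Hone^2)$, with $C$ uniform over $A\in\mathcal{A}$, $B\in\mathcal{B}$ because all the operator bounds used (boundedness of the uniform calculus, Lemma~\ref{lemma:easycomm}, Lemma~\ref{lemma:ubddtoQ}) are uniform over bounded families.

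The main obstacle is the order bookkeeping around the value $n=\tfrac{m+m'+k+k'-2}{2}$: one must check that the two ``$-1$'' savings (one from each of $E$ and $F$, available precisely because $r^{-1}\fib$ and $D_r$ are absorbed into the $\Hone$ norm rather than the $L^2$ norm) combine to give exactly the factor $n$ in the exponent, and that in the edge cases $m$ or $m'=0$ (where the savings do \emph{not} occur because $E$ or $F$ is order zero) the arithmetic still works --- indeed if $m=0$ then $E\in\Psibf^{k}\cdot(\text{something of order }0)$ maps $\Hone\to\Hone$ directly and one needs $\|A'u\|_\Hone$ with $A'$ of order $k$, which must be $\le \tfrac{k+k'+m'-2}{2}$; this forces one to split the Cauchy--Schwarz \emph{asymmetrically}, estimating $EAu$ in $\Hone$ and $FBu$ in $\Hmone$ (using the $\Hone$--$\Hmone$ duality and Corollary~\ref{cor:dualmappingprop}) rather than both in $L^2$, whenever the two operators have unequal ``weights.'' Setting up this asymmetric pairing cleanly --- deciding, based on the parities/sizes of $m,m',k,k'$, how many derivatives to put on each side --- is the one place where some care is genuinely needed; everything else is routine symbol calculus.
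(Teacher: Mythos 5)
Your proposal is correct and follows essentially the same route as the paper: decompose $E,F$ into their $\Diffbf$, $r^{-1}\fib$, and $D_r$ pieces, commute via Lemma~\ref{lemma:easycomm}, redistribute orders between the two factors, and close with Cauchy--Schwarz plus Lemmas~\ref{lemma:ubddtoQ} and~\ref{lemma:wavefrontHvsL2}. The ``asymmetric pairing'' you flag as the delicate point is implemented in the paper by inserting $\Lambdaminus\Lambdaplus=\Id+R$ with $\ell=\tfrac{-m+m'-k+k'}{2}$ and moving $\Lambdaminus^*$ across the inner product (note that your earlier one-sided suggestion of composing $A^\sharp$ with a negative-order operator and its parametrix would not work alone, since the parametrix has positive order; the transfer must happen across the pairing, exactly as your final paragraph indicates).
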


\begin{proof}
	Set $$\ell = \frac{-m + m' -k + k'}{2}.$$ Let $\Lambdaplus \in \Psibf^\ell(X)$ be elliptic with parametrix $\Lambdaminus \in \Psibf^{-\ell}(X)$ satisfying
		$$ \Lambdaminus \Lambdaplus = \Id + R $$
	with $R \in \Psibf^{-\infty}(X)$.
	
	Then using Lemma~\ref{lemma:easycomm} to deal with commutators
        of $r^{-1}$ and $D_r$ with elements of $\Psibf(X)$ 
        we calculate
	\begin{align*}
		 |\la EAu, FBu \ra |  &= | \la (\Lambdaminus \Lambdaplus - R) E A u, F B u \ra | \\	
		 	&\le | \la \Lambdaplus E Au, \Lambdaminus^*F B u \ra | + | \la R EA u, FBu \ra| \\
		 	&= \sum|\la \tilde{E}_j \tilde{A}_j u, \tilde{F}_j \tilde{B}_j u \ra | +  | \la R EA u, FBu \ra|
	\end{align*}
	where the sum is a finite one with
        $\tilde{E}_j$, $\tilde{F}_j$ having the same form and order as
        $E$ resp.\ $F$, and where $\tilde{A}$, $\tilde{B}$
        are members of families of operators $\tilde{\mathcal{A}}$
        (resp. $\tilde{\mathcal{B}}$) which are
        uniformly bounded in $\Psibf^{k+\ell}$, respectively
        $\Psibf^{k'-\ell}$.
	
	By Cauchy--Schwarz,
	\begin{equation} \label{eq:cauchyschwarz}
			|\la \tilde{E}_j \tilde{A}_j u, \tilde{F}_j \tilde{B}_j u \ra | \lesssim \|\tilde{E}_j \tilde{A}_ju\|_{L^2}^2 + \| \tilde{F}_j \tilde{B}_j u \|^2_{L^2}.
	\end{equation}
	For the first term on the RHS of 
        \eqref{eq:cauchyschwarz}, if $m=1$ we see $ n = k+\ell$ and we use Lemma \ref{lemma:ubddtoQ} to find
		$$ \|\tilde{E}_j \tilde{A}_j u \|_{L^2}^2 \lesssim \|\tilde{A}_j u \|_\Hone^2 \lesssim \|u\|_\Hone^2 + \|Qu\|_\Hone^2. $$
	If $m=0$ then $n = k+\ell -1$ and we use Lemma
        \ref{lemma:wavefrontHvsL2} along with Lemma
        \ref{lemma:ubddtoQ} to likewise obtain
		$$ \| \tilde{E}_j \tilde{A}_j u\|_{L^2}^2 \lesssim \|u\|^2_\Hone + \|Qu\|_\Hone^2. $$
	The second term on the right hand side of \eqref{eq:cauchyschwarz} is handled analogously.
\end{proof}

Next we establish an estimate on the Dirichlet form. Note that this lemma is an analogue of Lemma 8.8 from \cite{MeVaWu:08}. 

\begin{lemma}\label{lemma:ellest}
Let $A_\ind  \in \Psibf^{s-1}$ be a family of
        operators indexed by $\ind \in I$ that are uniformly bounded in $\Psib^s$ with
        $\WFbf'(A_\ind ) \subset \Gamma \subset U \subset \Sbstar X_\fib$ for $\Gamma$
        closed and $U$ open. Assume $u \in \Hone$ satisfies $\WF_{b,\Hone}^{s-\frac{1}{2}}(u) \cap U = \emptyset$.  There exist
       $Q \in \Psibf^{s-\frac{1}{2}},$ $\tQ \in \Psibf^{s+\frac 12}$
       elliptic on $\Gamma$ with $\WFbf'(Q)\cup \WFbf'(\tQ) \subset U$ and a constant $C >0$ such that
       \begin{equation}\label{dirichletformbnd}
         \begin{aligned}
		\Big| \|D_t A_\ind  u \|_{L^2} &-\|D_r A_\ind
                u\|_{L^2} -  \|r^{-1}\F A_\ind
                u\|_{L^2} \Big| \\ &\le C\left( \|u\|^2_{\Hone} +
                  \|Qu\|^2_{\Hone} + \|P u\|_{\Hmone}^2 +
                    \|\tQ P u\|_{\Hmone}^2 \right)\end{aligned}
	\end{equation}
	for all $\ind \in I$.
	\end{lemma}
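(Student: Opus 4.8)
The plan is to identify the left-hand side of \eqref{dirichletformbnd} with the metric Dirichlet form of $\Box_g$ evaluated at $A_\ind u$, and then to bound that Dirichlet form using the microlocal elliptic estimates already in hand, with every bound uniform in $\ind$. First I would note that $A_\ind u\in\Hone$ for each $\ind$, since $\WFbf'(A_\ind)\subset\Gamma\subset U$, $u$ is microlocally in $\HbHone^{s-1/2}$ throughout $U$, and each $A_\ind$ has order $s-1<s-\tfrac12$. As $\Hone$ is by definition the closure of $\mathcal{C}_c^\infty(X^\circ)$, the element $A_\ind u$ lies in the form domain of $\Box_g$, no boundary terms arise on integrating by parts, and computing the metric Dirichlet form for the rotating-string metric gives
\[
\|D_t A_\ind u\|_{L^2}^2 - \|D_r A_\ind u\|_{L^2}^2 - \|r^{-1}\F A_\ind u\|_{L^2}^2 = \operatorname{Re}\la\Box_g A_\ind u,\, A_\ind u\ra
\]
(the pairing is in fact real; here and below I read \eqref{dirichletformbnd} with squared norms, which is the combination the argument produces). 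In particular the left side is thereby finite; the whole issue is to bound it \emph{uniformly in} $\ind$, which is why one must appeal to Lemmas~\ref{lemma:ubddtoQ} and \ref{lemma:movingderivatives} rather than to a naive bound through $\|A_\ind u\|_\Hone$, which would require control of $u$ at order $s$ relative to $\Hone$, more than the hypothesis supplies.

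Writing $\Box_g u=Pu-\pert u$ I would split
\[
\operatorname{Re}\la\Box_g A_\ind u, A_\ind u\ra = \operatorname{Re}\la A_\ind Pu, A_\ind u\ra - \operatorname{Re}\la A_\ind\pert u, A_\ind u\ra + \operatorname{Re}\la[\Box_g, A_\ind]u, A_\ind u\ra ,
\]
and estimate the three pieces. For the first I would fix $\tQ\in\Psibf^{s+1/2}$ elliptic on $\Gamma$ with $\WFbf'(\tQ)\subset U$ and, by Proposition~\ref{prop:elliptic}, factor $A_\ind=Q'_\ind\tQ+R'_\ind$ with $\{Q'_\ind\}$ of order $\le s-\tfrac32$, uniformly bounded in $\Psib^{s-1/2}$ and microsupported in $\Gamma$, and $\{R'_\ind\}$ uniformly residual. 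Then $\la A_\ind Pu, A_\ind u\ra=\la\tQ Pu,\,(Q'_\ind)^*A_\ind u\ra+\la R'_\ind Pu, A_\ind u\ra$; the family $(Q'_\ind)^*A_\ind$ is uniformly bounded in $\Psib^{s-1/2}$ and microsupported in $\Gamma$, so Lemma~\ref{lemma:ubddtoQ}(i) — applicable because $u$ is microlocally $\HbHone^{s-1/2}$ on $U$ — produces a fixed $Q\in\Psibf^{s-1/2}$ elliptic on $\Gamma$ with $\WFbf'(Q)\subset U$ and $\|(Q'_\ind)^*A_\ind u\|_\Hone\le C(\|u\|_\Hone+\|Qu\|_\Hone)$; Cauchy--Schwarz in the $\Hmone$--$\Hone$ pairing together with $ab\le\tfrac12a^2+\tfrac12b^2$ then yields exactly the terms $\|\tQ Pu\|_\Hmone^2$, $\|u\|_\Hone^2$, $\|Qu\|_\Hone^2$, and the residual term is $\le C\|Pu\|_\Hmone\|u\|_\Hone$ after rewriting $\la R'_\ind Pu, A_\ind u\ra=\la A_\ind^*R'_\ind Pu, u\ra$ with $A_\ind^*R'_\ind$ uniformly residual. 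The perturbation term is handled by the identical device: since $\pert\in\Diff_\bu^1$ one has $\pert u\in L^2$ and, microlocally on $U$, $\pert u$ inherits the regularity of $u$ (because $[\pert,\Psibf^k]\subset\Psib^k$), so $\|\tQ\pert u\|_\Hmone$ is itself $\le C(\|u\|_\Hone+\|Qu\|_\Hone)$ and this piece contributes only $C(\|u\|_\Hone^2+\|Qu\|_\Hone^2)$ — no new terms.

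For the commutator term I would expand $[\Box_g,A_\ind]$ using $\Box_g=D_t^2-D_r^2+ir^{-1}D_r-r^{-2}\F^2$, the fiber-invariance $[\F,A_\ind]=0$ (and $[\F,r^{-1}]=0$), Lemma~\ref{lemma:easycomm} for $[D_r,A_\ind]$ and $[r^{-1},A_\ind]$, and Lemma~\ref{lemma:indicial} to move powers of $r$ past pseudodifferential factors. The crucial point is that the singular part contributes only $[r^{-2}\F^2,A_\ind]=[r^{-2},A_\ind]\F^2$ with $[r^{-2},A_\ind]=r^{-2}G+G'r^{-2}$, $G,G'\in\Psibf^{s-2}$ fiber-invariant; since $G,G'$ commute with $\F$ and with $r^{-1}$, pairing against $A_\ind u$ converts this term into $\la(r^{-1}\F)A'_\ind u,\,(r^{-1}\F)A_\ind u\ra$ with $\{A'_\ind\}$ uniformly bounded in $\Psib^{s-1}$. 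Likewise every other term of $[\Box_g,A_\ind]$, once paired with $A_\ind u$, takes the form $\la EA'u, FB'u\ra$ with $E,F$ of order $\le1$ in $\Diffbf+\CIf r^{-1}\F+\CIf D_r$ and $A',B'$ from families uniformly bounded in $\Psibf$ of orders for which the index $n=\tfrac12(m+m'+k+k'-2)$ of Lemma~\ref{lemma:movingderivatives} equals $s-\tfrac12$ — exactly the assumed microlocal regularity of $u$ — so Lemma~\ref{lemma:movingderivatives} bounds each by $C(\|u\|_\Hone^2+\|Qu\|_\Hone^2)$. Summing all contributions and absorbing the several test operators (all of order $\le s-\tfrac12$, respectively $s+\tfrac12$) into single $Q\in\Psibf^{s-1/2}$, $\tQ\in\Psibf^{s+1/2}$ elliptic on $\Gamma$ with microsupport in $U$ (Proposition~\ref{prop:elliptic}) yields \eqref{dirichletformbnd}.

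The step I expect to be the main obstacle is the commutator expansion: one must verify that no term produced by $[\Box_g,A_\ind]$ — above all those from the singular $r^{-2}\F^2$ — survives as a genuinely $r^{-2}$-weighted pseudodifferential operator, which is precisely where fiber-invariance of the $A_\ind$ is indispensable, and that in every term the derivative count and the pseudodifferential orders combine to give $n=s-\tfrac12$, so that Lemma~\ref{lemma:movingderivatives} applies under the stated hypothesis. A secondary delicate point is arranging, in the $Pu$ and $\pert u$ terms, that $Pu$ is tested only by the order-$(s+\tfrac12)$ operator $\tQ$ while $u$ is tested only by order-$(s-\tfrac12)$ operators; this asymmetry, forced by the parametrix split through $\tQ$, is what accounts for the appearance of $\Psibf^{s+1/2}$ in the statement.
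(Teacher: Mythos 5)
Your proposal is correct and follows essentially the same route as the paper's proof: the Dirichlet-form identity for $A_\ind u\in\Hone$, the splitting into the $A_\ind Pu$ term (handled by shifting half an order onto $Pu$ --- the paper inserts $\Lambda_{-\frac12}\Lambda_{\frac12}=\Id+R$ rather than using your $\tQ$-parametrix factorization, but this is the same device), the perturbation term, and the commutator term controlled by Lemma~\ref{lemma:movingderivatives} with the order count $m+m'+k+k'\le 2s+1$, the singular $r^{-2}\F^2$ piece being tamed exactly as you say by fiber-invariance and Lemma~\ref{lemma:easycomm}. The only blemishes are bookkeeping slips --- $Q'_\ind$ should have order $\le -\frac32$ and be uniformly bounded in $\Psib^{-1/2}$ (not $s-\frac32$ and $\Psib^{s-1/2}$), which is what makes $(Q'_\ind)^*A_\ind$ uniformly bounded in $\Psib^{s-1/2}$ as you then correctly use --- and you are right that the left-hand side of \eqref{dirichletformbnd} is to be read with squared norms, as the paper's own computation confirms.
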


\begin{proof}   Our assumptions on the wavefront set of $u$ imply that  for each $\ind$, we have $A_\ind  u \in \Hone$ so that $P A_\ind  u \in \Hone^*$. It follows that
	\begin{align*} 
          \|D_t A_\ind  u \|_{L^2} -& \|D_r A_\ind  u\|_{L^2}
                                      -\|r^{-1}\F A_\ind u\|_{L^2} = \langle P A_\ind  u, A_\ind u \rangle - \langle \pert A_\ind u, A_\ind u \rangle \\
                                    &= \langle [P, A_\ind ] u, A_\ind  u \rangle + \langle A_\ind P u , A_\ind u \rangle - \langle \pert A_\ind u, A_\ind u \rangle.
	\end{align*}
 Thus it suffices to show
\begin{equation}\label{commterm}
	|\langle [P, A_\ind  ] u, A_\ind  u \rangle - \langle \pert A_\ind u, A_\ind u \rangle | \le C \left( \|u\|_\Hone^2 + \|Qu\|_\Hone^2 \right)
\end{equation}
and
\begin{equation}\label{puterm}
	|\langle A_\ind P u , A_\ind u \rangle|  \le C \left( \|u\|_\Hone^2 + \|Qu\|_\Hone^2 +  \| P u\|_{\Hone^*}^2 + \|\tilde{Q}Pu\|_{\Hone^*}^2 \right).
\end{equation}

First we establish \ref{puterm}. Using the notation from the proof of Lemma \ref{lemma:movingderivatives} we have
	\begin{align*}
		|\la A_\ind P u, A_\ind u \ra | \le | \la \Lambda_{\frac{1}{2}}A_\ind P u, \Lambdaneghalf^*A_\ind u \ra| + | \la A_\ind Pu, R^*A_\ind u \ra| 
	\end{align*}
where $R \in \Psibf^{-\infty}$. The second term is readily bounded as in \eqref{dirichletformbnd}.  For the first term we use Cauchy--Schwarz and Lemma \ref{lemma:ubddtoQ} to find
 	\begin{align*}
          \left| \la \Lambda_{\frac{1}{2}}A_\ind P u,
          \Lambdaneghalf^*A_\ind u \ra \right| &\lesssim \|
                                                 \Lambdahalf A_\ind P
                                                 u \|_{\Hone^*}^2 + \|
                                                 \Lambdaneghalf A_\ind
                                                 u\|_\Hone^2 \\ 
                                               &\lesssim \|P u\|_{\Hone^*}^2 + \|\tilde{Q}P u \|_{\Hone^*}^2 + \|u\|_\Hone^2 + \|Qu\|_\Hone^2,
 	\end{align*}
 which proves \ref{puterm}.

 As we turn our attention to the commutator term in \ref{commterm}, we
 are primarily interested in the order of the pseudodifferential
 operators that arise. Since we commute with $A_\ind \in \mathcal{A}$,
 these operators will be members of families indexed by $\mathcal{A}$. To
 concisely track the relevant information we use the notation $B_k$ to
 indicate a representative of the class of families of operators
 indexed by $\ind$ which are elements of $\Psibf^{k-1}(X)$ and are
 uniformly bounded in $\Psibf^{k}(X)$. The precise operator being
 specified may change at each appearance of $B_k$.

Now consider $[P,A_\ind ]$. As $D_r^* = D_r-\frac{i}{r}$, $D_r^*D_r =
- r^{-2}(r\pa_r)^2$, hence
	\[ [\Box_g,A_\ind ] = B_{s+1} + r^{-2}\F^2 B_{s-1} - [D_r^*D_r,A_\ind ].\] 
Recalling that $\pert$ is a first-order b-differential operator lying
in $\Psib^1(X)$, we find that $[\pert, A_\ind]$ is uniformly bounded in $\Psib^s(X)$.
Thus
	\[  [P,A_\ind] = B_{s+1} +B'_s+ r^{-2}\F^2 B_{s-1} -
          [D_r^*D_r,A_\ind ]  + D_r B_{s-1},\]
        where $B'_s \in \Psib^s(X)$ is uniformly bounded and all other
        families are in $\Psibf(X)$.
Thus
\begin{align*}
	\la [ P,&A_\ind ] u, A_\ind  u \ra - \la \pert A_\ind u, A_\ind u \ra \\
			 &= \la B_{s+1} u, A_\ind u \ra +  \la r^{-1}\fib B_{s-1} u, r^{-1}\fib A_\ind u \ra -  \la [D_r,A_\ind ]u, D_rA_\ind u \ra  \\
					&\quad  + \la D_ru,[D_r,A_\ind ^*]A_\ind  u \ra + \la D_r B_{s} u, A_\ind u \ra +\la B'_s u, A_\ind u \ra\\
			&= \la B_{s+1} u, A_\ind u \ra +  \la r^{-1}\fib B_{s-1} u, r^{-1} \fib A_\ind u \ra - \la B_s u, D_r A_\ind u \ra  \\ 
				&\quad + \la D_r B_{s-1} u, D_r A_\ind u \ra + \la D_r u, B_{2s}  u \ra + \la D_r u, D_r B_{2s-1} u \ra \\
				&\quad
                           A_\ind u \ra + \la D_r B_{s} u, A_\ind u
                           \ra+\la B'_s u, A_\ind u \ra.
\end{align*}


Each term but the $B'_s$ term is then controlled by $\|u\|_\Hone^2 +
\|Qu\|_\Hone^2$, by Lemma \ref{lemma:movingderivatives} since, using
the notation of Lemma \ref{lemma:movingderivatives}, in each term of
our expression for $\la [P, A_\ind] u, A_\ind u \ra$ above  we see $m
+ m' + k + k' \le 2s+1$. The $B'_s$ term is likewise controlled by
Corollary~\ref{cor:uniformbounded}.
This concludes the proof of \ref{commterm}.
      \end{proof}

We conclude the section with the proof of our microlocal elliptic
regularity statement.

	\begin{proposition}\label{prop:ellipticreg}
Let $u \in \HbHone^{-N}$ for some $N\in \RR$. Then
                        for any $m \in \RR \cup \{+\infty\},$
                        			\[ \WFbh^m
                                                  (u)\backslash
                                                  \WFbhstar^m(Pu)
                                                  \subset\Sbdotstar X.  \]
	\end{proposition}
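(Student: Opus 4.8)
The strategy is the standard microlocal elliptic regularity argument via a positive commutator / parametrix construction, carried out in the fiber-invariant b-calculus and leveraging the Dirichlet-form estimate of Lemma~\ref{lemma:ellest}. The key point is that the characteristic set of $P$ over $\pa X$, computed in \eqref{bsymbol}, is exactly $\{\xib = \C\taub+\etab = 0\}$, which when quotiented by the fiber relation $\sim$ is precisely $\Sbdotstar X$; so a point $q \in \Sbstar X_\fib \setminus \Sbdotstar X$ is \emph{elliptic} for $P$ (after the weight $r^{-2}$ is accounted for — the relevant operator is $r^2 P \in \Psib^2(X)$, or one works with the weighted symbol directly). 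Away from $r=0$ the claim is just ordinary microlocal elliptic regularity, so the content is at the boundary.

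\medskip

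First I would reduce to an a priori estimate: to show $q \notin \WFbh^m(u)$ given $q \notin \WFbhstar^m(Pu)$, it suffices to produce $A \in \Psibf^m(X)$ elliptic at $q$ with $Au \in \Hone$, which by the iterative (bootstrap) scheme one upgrades from the assumed background regularity $u \in \HbHone^{-N}$ in half-integer steps. Fix $q \in \liptic(r^2 P) \cap (\Sbstar X_\fib \setminus \Sbdotstar X)$; choose a closed conic $\Gamma \ni q$ and open $U \supset \Gamma$, small enough that $r^2 P$ is elliptic throughout $U$ and the inductive hypothesis $\WFbh^{s-\frac12}(u)\cap U = \emptyset$ holds (with $s$ the regularity level being established, $s \le m$). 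Take $A = A_\ind$ to be a family of order $s-1$ operators, uniformly bounded in $\Psibf^s$, elliptic on $\Gamma$, microsupported in $U$, and constructed so that $A_\ind \to A_\infty \in \Psibf^s$ (a regularizer family, e.g.\ $A_\ind = \Lambda_{-1/\ind}A_\infty$ type); this regularization makes all the pairings in Lemma~\ref{lemma:ellest} a priori finite. Apply Lemma~\ref{lemma:ellest} to get, uniformly in $\ind$,
\[
\big| \|D_t A_\ind u\|_{L^2}^2 - \|D_r A_\ind u\|_{L^2}^2 - \|r^{-1}\F A_\ind u\|_{L^2}^2 \big| \le C\big(\|u\|_\Hone^2 + \|Qu\|_\Hone^2 + \|Pu\|_{\Hmone}^2 + \|\tQ Pu\|_{\Hmone}^2\big).
\]
The RHS is finite and $\ind$-independent: $\|Qu\|_\Hone$ because $Q$ has order $s-\frac12 \le m$ and $\WFbf'Q \subset U$ is disjoint from $\WFbh^{s-1/2}(u)$, and $\|\tQ Pu\|_{\Hmone}$ because $\tQ$ has order $s+\frac12$, $\WFbf'\tQ \subset U$, and $q \notin \WFbhstar^m(Pu) \supset \WFbhstar^{s+1/2}(Pu)$ for $s+\frac12 \le m$ (again using disjointness of $U$ from the wavefront set of $Pu$, which we are free to arrange by shrinking $U$).

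\medskip

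The crux is then to convert the bound on the \emph{difference} of the three $\Hone$-type terms into a bound on the $\Hone$-norm of $A_\ind u$ itself. Here ellipticity enters: since $q$ is elliptic for $p = \taub^2 - r^{-2}\xib^2 - r^{-2}(\C\taub+\etab)^2$, at $q$ (and on $U$, shrunk if necessary) the symbol $r^2|p|$ is bounded below, which means that $\|D_r A_\ind u\|^2 + \|r^{-1}\F A_\ind u\|^2$ and $\|D_t A_\ind u\|^2$ cannot be comparable there — one of them dominates, with a gap controlled below by the ellipticity constant. Concretely, construct the test family so that its symbol is supported where $r^{-2}(\xib^2 + (\C\taub+\etab)^2) \ge (1+\delta)\taub^2$ or where it is $\le (1-\delta)\taub^2$; in the former region the "$-$" terms dominate and one gets $\|D_r A_\ind u\|^2 + \|r^{-1}\F A_\ind u\|^2 \lesssim$ RHS, in the latter $\|D_t A_\ind u\|^2 \lesssim$ RHS, and in either case combining with the remaining lower-order $\Hone$-terms (which are $L^2$-terms handled by the uniform calculus on $\HbHone^{s-1}$, already controlled) yields $\|A_\ind u\|_\Hone \le C$ uniformly in $\ind$. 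Letting $\ind \to \infty$ and using weak compactness in $\Hone$ gives $A_\infty u \in \Hone$ with $A_\infty$ elliptic at $q$, hence $q \notin \WFbh^s(u)$. Iterating over $s = -N+\frac12, -N+1, \dots$ up to $m$ (the bootstrap closes because each step only requires the previous half-integer level, supplied inductively) completes the proof; for $m = +\infty$ one uses $\WFbh^\infty u = \overline{\bigcup_s \WFbh^s u}$. The interior case $r>0$ is the classical argument and needs no boundary calculus. \emph{The main obstacle} is the regularization argument: ensuring the family $A_\ind$ is genuinely uniformly bounded in $\Psibf^s$ (not just $\Psib^s$) while converging to an elliptic operator, so that Lemma~\ref{lemma:ellest} and Lemma~\ref{lemma:movingderivatives} apply with $\ind$-uniform constants and the limit can be taken — this is exactly the technical role the fiber-invariant subcalculus was introduced to make clean, but one must check the regularizers can be chosen fiber-invariant, which follows since $\Lambda_{\pm c}$ and cutoffs in $\abs{\alpha}$ can be taken with fiber-invariant symbols.
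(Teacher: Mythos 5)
Your overall architecture matches the paper's: a half-integer-step bootstrap, a fiber-invariant regularizing family $A_\ind$ bounded in $\Psibf^{s+1/2}$ and converging to an operator elliptic at $q$, an application of the Dirichlet-form estimate of Lemma~\ref{lemma:ellest}, and weak compactness in $\Hone$ at the end. (The paper's regularizer is the explicit symbol $\big(1+\ind(\taub^2+\xib^2+(\etab+\C\taub)^2)\big)^{-1/2}a$, which in particular settles your worry about choosing the regularizers fiber-invariant.)

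The gap is in what you yourself identify as the crux: converting the bound on the difference $\|D_tA_\ind u\|^2-\|D_rA_\ind u\|^2-\|r^{-1}\F A_\ind u\|^2$ into a bound on the individual terms. You assert that if the symbol of the test family is supported where $r^{-2}(\xib^2+(\C\taub+\etab)^2)\ge(1+\delta)\taub^2$, then ``the minus terms dominate,'' i.e.\ $\|D_rA_\ind u\|^2+\|r^{-1}\F A_\ind u\|^2\gtrsim\|D_tA_\ind u\|^2$ up to controlled errors. This does not follow from symbol support alone: $D_r$ and $r^{-1}\F$ are singular operators lying outside the (fiber-invariant) b-calculus, so no sharp G\aa rding or square-root argument is available off the shelf to pass from positivity of $\xib^2+(\C\taub+\etab)^2-(1+\delta)r^2\taub^2$ on the microsupport to the corresponding quadratic-form inequality. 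The paper supplies exactly this device: it introduces $B\in\Psibf^1$ with symbol $\big(\chi_q\big(\tfrac{1}{2\delta^2}(\xib^2+(\C\taub+\etab)^2)-\taub^2\big)\big)^{1/2}$, so that $B^*B$ agrees with $\tfrac{1}{2\delta^2}\big[(rD_r)^2+\F^2\big]-D_t^2$ modulo first-order and microlocally trivial errors --- note that $(rD_r)^2$ and $\F^2$ \emph{are} genuine fiber-invariant b-operators --- and then uses the support condition $r<\delta$ on $A_\ind$ to convert $\tfrac{1}{2\delta^2}\|rD_rA_\ind u\|^2\le\tfrac12\|D_rA_\ind u\|^2$ and similarly for $\F$. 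Adding the manifestly nonnegative term $\|BA_\ind u\|^2$ to the Dirichlet-form bound then yields the uniform bound on $\|A_\ind u\|_{\Hone}$. Without this construction (or an equivalent one), your ``one of them dominates'' step is unproved, and it is the only genuinely nontrivial step at the boundary. A minor secondary point: over $\pa X$ a point outside $\Sbdotstar X$ has $\xib^2+(\C\taub+\etab)^2>0$, so $r^{-2}(\xib^2+(\C\taub+\etab)^2)\to\infty$ as $r\to0$; your second case ($\le(1-\delta)\taub^2$) never arises at the boundary and is simply the interior elliptic region handled by the classical argument.
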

Recall that $\Sbdotstar X$ is the fiber-quotient of the image
        of the edge-cotangent bundle inside the b-cotangent bundle,
        defined in Section~\ref{sectionCompressedBundle}.
	\begin{proof}
          Note that $\WFbh^{-N}(u) = \emptyset$ since $u \in \HbHone^{-N}$. We
          claim that
          $q \in \Sbstar(X) \setminus \dot{\pi}(\Sestar(X))$,
          $q \notin \WFbh^s(u)$ and
          $q \notin \WFbhstar^{s+\frac{1}{2}} (Pu)$ implies
          $q \notin \WFbh^{s+\frac{1}{2}}(u)$. The proposition then
          follows by induction.  To establish the claim, we first note
          for $q \in \Sbstar X_\fib \setminus \Sbdotstar X$ we have
			\[ \taub^2 < \xib^2 + (\etab + \C \taub)^2. \]

		Take $A \in \Psibf^{s+\frac{1}{2}}$ such that $A$ is
                elliptic at $q$ and $\WFbf'(A) \cap \WF_{b,\Hone}^s(u)
                = \emptyset.$ Let $a$ be the symbol of $A$ and define
                $A_\ind$ as the quantization of $\big(1 + \ind(\taub^2 +\xib^2 + (\etab + \C \taub)^2)\big)^{-1/2}a$. Then $A_\ind \in \Psibf^{s- \frac{1}{2}}$,  $A_\ind$ is uniformly bounded in $\Psibf^{s+\frac{1}{2}}$, and $A_\ind \to A$ as $\ind \to 0$.
			
		Define $B\in \Psibf^1(X)$ as the quantization of
                $\left( \chi_q\left( \frac{1}{2\delta^2}(\xib^2
                    +(\etab + \C \taub)^2) - \taub^2
                  \right)\right)^{\frac{1}{2}}$ where $\chi_q$ is a
                cutoff supported near $q$ with $\chi_q$ and $\delta$
                chosen so that $B$ is elliptic near $q$. Furthermore
                we assume
                $\WFb'(\hat{\chi}_q -1) \cap \WFb'(A_\ind) = \emptyset$
                where $\hat{\chi}_q = \Op(\chi_q)$. Then
		$$B^*B = \left( \frac{1}{2\delta^2}  [(rD_r)^2 + \F^2] - D_t^2  \right) [1 + (\hat{\chi}_q-1)] + G$$
		for some $G \in \Psibf^1(X)$. Our assumption on the wavefront set of $\hat{\chi}_q-1$ and $A$ then give
		$$ B^*B A_\ind =  \left( \frac{1}{2\delta^2}  [(rD_r)^2 + \F^2] - D_t^2  \right)A_\ind + GA_\ind + E_\ind $$
		where $E_\ind \in \Psibf^{-\infty}$.

 We may further assume $A_\ind $ is supported in $r < \delta$ so that
		\begin{align*}
			\|BA_\ind  u \|_{L^2}^2 - & \int_X G A_\ind  u \overline{A_\ind  u}\\
			 &= \frac{1}{2\delta^2} \big( \|rD_rA_\ind  u\|_{L^2}^2 + \|\F A_\ind u\|_{L^2}^2\big) - \|D_t A_\ind u \|_{L^2}^2 \\
				&\le \frac{1}{2} \big( \|D_rA_\ind  u\|_{L^2}^2 + \|r^{-1}\F A_\ind u\|_{L^2}^2  \big) - \|D_t A_\ind u \|_{L^2}^2. 
		\end{align*}		
Since $\int_X G A_\ind  u \overline{A_\ind  u}$
  is uniformly bounded by the inductive hypothesis,
by Lemma \ref{lemma:ellest} we see
	\[ \frac12 \big( \|D_rA_\ind  u\|_{L^2}^2 + \|r^{-1}\F A_\ind u\|_{L^2}^2  \big)  + \|BA_\ind u\|_{L^2}^2 \]
is uniformly bounded as $ \ind \to 0$. It follows that $ \|A_\ind u\|_\Hone$ is uniformly bounded as $\ind \to 0$. Thus there is a subsequence of $A_\ind u$ which converges weakly in $\Hone$. Since $A_\ind \to A$, we see the weak limit is $Au \in \Hone$ so that $q \notin \WFbh^{s+\frac{1}{2}}$, as desired.

	\end{proof}

We also state a less precise version of this result, involving only
b-regularity:
\begin{corollary}\label{cor:b-elliptic}
Let $u \in \HbHone^{-N}(X)$ for some $N \in \RR$. Then
                        for any $m \in \RR \cup \{+\infty\},$
                        			\[ \WFb^{m+1}
                                                  (u)\backslash
                                                  \WFb^{m-1}(Pu)
                                                  \subset \Sbdotstar X.  \]

  \end{corollary}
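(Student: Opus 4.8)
The plan is to derive the corollary from Proposition~\ref{prop:ellipticreg} purely by translating between the $\Hone$-relative b-wavefront sets there and the plain b-wavefront sets here, using the Sobolev inclusions in Lemma~\ref{lemma:WFinclusions}. The first inclusion of \eqref{WF1} gives $\WFb^{m+1}(u)\subset\WFbh^m(u)$, and applying the second inclusion of \eqref{WF2} with $u$ replaced by $Pu$ and $m$ by $m-1$ gives $\WFbhstar^m(Pu)\subset\WFb^{m-1}(Pu)$. Since a set difference is monotone (increasing in the first argument, decreasing in the second), these combine to
$$
\WFb^{m+1}(u)\setminus\WFb^{m-1}(Pu)\ \subset\ \WFbh^m(u)\setminus\WFbhstar^m(Pu)\ \subset\ \Sbdotstar X,
$$
the final inclusion being the content of Proposition~\ref{prop:ellipticreg}, whose hypothesis $u\in\HbHone^{-N}$ is exactly the one assumed in the corollary. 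So at the level of bookkeeping the corollary is immediate; the only thing to be careful about is the applicability of Lemma~\ref{lemma:WFinclusions}.

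The subtlety is that Lemma~\ref{lemma:WFinclusions} is stated for $u\in\Hone$ (resp.\ $Pu\in\Hmone$), while here we only have $u\in\HbHone^{-N}$. The two inclusions we invoke are nonetheless microlocal: if $A\in\Psibf^m$ is elliptic at $q$ with $Au\in\Hone$, then for any globally elliptic $\Lambda\in\Psibf^1$ the operator $\Lambda A\in\Psibf^{m+1}$ is still elliptic at $q$, and $\Lambda A u\in\Lambda(\Hone)\subset L^2$; here one uses $\Hone\subset H^1_b$ together with boundedness of $\Psibf^1$ from $H^1_b$ to $L^2$. Dually, if $A\in\Psibf^{m-1}$ is elliptic at $q$ with $A(Pu)\in L^2\subset\Hmone$, then $\Lambda A\in\Psibf^m$ is elliptic at $q$ and $\Lambda A(Pu)\in\Lambda(L^2)\subset\Hmone$, using $\Lambda^*\colon\Hone\to L^2$. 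Neither step requires $u$ or $Pu$ to lie globally in $\Hone$ or $\Hmone$; moreover the statement is only substantive near $\pa X=\{r=0\}$, where $r$ is bounded, so the inclusions supplied by Lemma~\ref{lemma:spaces1} apply in the region that matters.

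The main (and essentially only) obstacle is thus this reconciliation of hypotheses with the $\Hone$-phrasing of Lemma~\ref{lemma:WFinclusions} and the accompanying index arithmetic; all of the real analytic work is already contained in Proposition~\ref{prop:ellipticreg}.
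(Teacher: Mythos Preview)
Your proof is correct and follows exactly the approach of the paper, which simply says the corollary ``follows directly from Proposition~\ref{prop:ellipticreg} and the wavefront set inclusions in Lemma~\ref{lemma:WFinclusions}.'' Your second paragraph goes beyond the paper's one-line proof in carefully justifying why the global hypotheses $u\in\Hone$, $Pu\in\Hmone$ in Lemma~\ref{lemma:WFinclusions} are inessential for the two inclusions you actually use; this is a legitimate point (the proof of that lemma is indeed purely microlocal), and your argument for it is sound.
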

  \begin{proof}
This follows directly from Proposition~\ref{prop:ellipticreg} and the
wavefront set inclusions in Lemma~\ref{lemma:WFinclusions}.
    \end{proof}
        \section{Law of reflection}\label{sec:propagation}
        
        In this section we prove our propagation of singularities
        result using a positive commutator argument.
We begin with a lemma
        giving the explicit form of the relevant commutator.

        \begin{lemma}\label{lemma:maincommutator}
          Let $A \in \Psibf^m(X)$ with real
          principal symbol.  Then
\begin{equation}\label{maincommutator}
i[\Box_g,A^*A] = D_r^* L_r D_r +(r^{-1} \fib) L_\fib (r^{-1} \fib) + D_r^* L' +
L'' D_r + L_0.
\end{equation}
where
\begin{itemize}
\item
  $\displaystyle L_r,L_\fib \in \Psibf^{2m-1},\ \sigmab{2m-1}(L_{\bullet})= 4 a \pa_{\xib} a;
$
\item
  $\displaystyle L',L'' \in \Psibf^{2m}(X),\
  \sigmab{2m}(L')=\sigmab{2m}(L'') = 2 a \pa_{r} a$
  \item $\displaystyle L_0 \in \Psibf^{2m+1}(X),\ \sigmab{2m+1}(L_0)
    =4 \tau a \pa_ta.$
\end{itemize}
\end{lemma}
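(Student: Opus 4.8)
The plan is to decompose $\Box_g$ as a signed sum of squares of first-order operators and to commute $A^*A$ through the squares one at a time, tracking powers of $r$ and drops in order via Lemmas~\ref{lemma:indicial} and~\ref{lemma:easycomm} (the structure parallels the positive commutator computations of \cite{MeVaWu:08}). First I would record the algebraic identity
$$
\Box_g = D_t^* D_t - D_r^* D_r - (r^{-1}\fib)^*(r^{-1}\fib),
$$
which follows from $D_t^*=D_t$, from $-D_r^*D_r = r^{-2}(r\pa_r)^2$, from the formal self-adjointness of $\fib$ together with $[\fib,r]=0$ (so that $(r^{-1}\fib)^*=r^{-1}\fib$ and $(r^{-1}\fib)^2 = r^{-2}\fib^2$), and from $-r^{-2}\fib^2 = r^{-2}(\C\pa_t+\pa_\varphi)^2$. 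Since $a:=\sigmab{m}(A)$ is real we have $\sigmab{2m}(A^*A)=a^2$; since $[\fib,A]=0$ it follows that $[\fib,A^*]=0$, hence $A^*A$ is fiber-invariant and moreover $[\fib^2,A^*A]=0$. Expanding,
$$
i[\Box_g,A^*A] = i[D_t^2,A^*A] - i[D_r^*D_r,A^*A] - i[(r^{-1}\fib)^2,A^*A],
$$
and I would handle the three pieces in turn.

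The $D_t^2$ piece is immediate: $i[D_t^2,A^*A]\in\Psibf^{2m+1}(X)$, this is the only contribution to $L_0$, and its principal symbol is computed from the symbol calculus to be $4\taub\,a\,\pa_t a$. The $\fib^2$ piece is where fiber-invariance does its work: using $[\fib^2,A^*A]=0$ and $[\fib,r]=0$,
$$
[(r^{-1}\fib)^2,A^*A] = [r^{-2},A^*A]\,\fib^2 .
$$
By Lemma~\ref{lemma:indicial}, applied to $[r^{\pm 2},A^*A]$, together with Lemma~\ref{lemma:easycomm}, one may write $[r^{-2},A^*A]=r^{-1}L_\fib r^{-1}$ with $L_\fib\in\Psibf^{2m-1}(X)$ fiber-invariant and $\sigmab{2m-1}(L_\fib)=4a\,\pa_{\xib}a$ (after the constant bookkeeping); since $L_\fib$ commutes with both $\fib$ and $r^{\pm1}$, a factor of $\fib$ can be carried back across to land this piece \emph{exactly} in the form $(r^{-1}\fib)L_\fib(r^{-1}\fib)$, with no remainder. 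This produces the second term of \eqref{maincommutator}.

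The $D_r^*D_r$ piece is the laborious one. Starting from
$$
[D_r^*D_r,A^*A] = D_r^*[D_r,A^*A] + [D_r^*,A^*A]D_r ,
$$
I would use Lemma~\ref{lemma:easycomm} in both of its forms, $[D_r,\cdot]=E+FD_r=E'+D_rF'$ and $[r^{-1},\cdot]=r^{-1}G=G'r^{-1}$, to move the factors of $D_r$ off of $A^*A$, then iterate: every stray $D_r$ is commuted past the resulting lower-order pseudodifferential factors, and $D_r^2 = D_r^*D_r + ir^{-1}D_r$ is substituted wherever it appears. Each step lowers the order of the pseudodifferential factor, so the procedure terminates. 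Collecting, the terms carrying a $D_r^*$ on the left and a $D_r$ on the right assemble into $D_r^*L_rD_r$ with $L_r\in\Psibf^{2m-1}(X)$; the terms with only a left $D_r^*$ give $D_r^*L'$ with $L'\in\Psibf^{2m}(X)$; those with only a right $D_r$ give $L''D_r$; and the remainder is genuinely non-singular, lying in $\Psibf^{2m+1}(X)$, hence is absorbed into $L_0$, where it contributes nothing to the principal symbol by a degree count. Finally one reads off the principal symbols from the Poisson bracket associated with \eqref{bsymplectic} and the symbol identities of Lemma~\ref{lemma:easycomm}, using $\fib a = (\C\pa_t+\pa_\varphi)a = 0$ to eliminate the $\pa_t$- and $\pa_\varphi$-cross terms that would otherwise appear in $\{r^{-2}\fib^2,a^2\}$; this yields $\sigmab{2m-1}(L_r)=\sigmab{2m-1}(L_\fib)=4a\,\pa_{\xib}a$, $\sigmab{2m}(L')=\sigmab{2m}(L'')=2a\,\pa_r a$, and $\sigmab{2m+1}(L_0)=4\taub\,a\,\pa_t a$.

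The main obstacle is the bookkeeping in the $D_r^*D_r$ piece: one must verify that, after extracting the explicitly singular structured terms $D_r^*L_rD_r$ and $(r^{-1}\fib)L_\fib(r^{-1}\fib)$ together with the once-singular terms $D_r^*L'$ and $L''D_r$, the residual operator genuinely belongs to the unweighted fiber-invariant calculus $\Psibf^{2m+1}(X)$ rather than to some weighted space $r^{-j}\Psibf$. This rests on the indicial calculus of Lemma~\ref{lemma:indicial}, which pins down exactly which commutators pick up factors of $r$, and, for the $\fib^2$ term, on the fiber-invariance hypothesis on $A$: without $[\fib^2,A^*A]=0$ that term could not be put in the form $(r^{-1}\fib)L_\fib(r^{-1}\fib)$ at all.
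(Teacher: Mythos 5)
Your decomposition of $\Box_g$ and your treatment of the $D_t^2$ and $r^{-2}\fib^2$ pieces match the paper's proof (modulo the harmless misstatement that $L_\fib$ commutes with $r^{\pm 1}$ --- it does not, but only a factor of $\fib$ needs to be moved across $r^{-1}$, and $[\fib,r]=0$). The gap is in the $D_r^*D_r$ piece. Your iteration --- expanding $[D_r^*,A^*A]D_r$ via $D_r^*=D_r-ir^{-1}$, substituting $D_r^2=D_r^*D_r+ir^{-1}D_r$, and commuting stray factors --- necessarily produces terms of the form $r^{-1}F'D_r$ and $G'r^{-1}D_r$ with $F',G'\in\Psibf^{2m-1}$. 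Since $D_r=r^{-1}(rD_r)$, such terms lie in $r^{-2}\Psibf^{2m}$, i.e.\ in a \emph{weighted} space, and lowering the pseudodifferential order by further commutation never removes the $r^{-2}$ weight; a term of this kind cannot be absorbed into $L_0\in\Psibf^{2m+1}$, nor into $D_r^*L_rD_r$ with $L_r\in\Psibf^{2m-1}$. The combination $r^{-1}F'-G'r^{-1}$ does in fact vanish identically, but only because of the specific operators constructed in the proof of Lemma~\ref{lemma:easycomm} (there $F'=r[r^{-1},A^*A]$, so $r^{-1}F'=[r^{-1},A^*A]=G'r^{-1}$); the \emph{statement} of that lemma pins down only principal symbols, which leaves an uncontrolled remainder in $r^{-2}\Psibf^{2m-1}$. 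Your write-up flags this as the main obstacle and then asserts the remainder is non-singular, but never exhibits the cancellation, and a degree count alone cannot supply it.

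The paper sidesteps the issue with an adjoint symmetrization: since $A^*A$ is self-adjoint, $[D_r^*D_r,A^*A]=D_r^*[D_r,A^*A]-[D_r,A^*A]^*D_r$, and writing $[D_r,A^*A]=E+FD_r$ gives $-[D_r,A^*A]^*=-E^*-D_r^*F^*$, which already carries $D_r^*$ on the left. Reality of $a$ enters precisely here: $\sigmab{2m}(E)=-i\pa_r(a^2)$ and $\sigmab{2m-1}(F)=-i\pa_{\xib}(a^2)$ are purely imaginary, so $-E^*$ and $-F^*$ have the same principal symbols as $E$ and $F$, and one obtains $D_r^*(F-F^*)D_r+D_r^*E-E^*D_r$ exactly, with no weighted remainder. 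Note that your argument never invokes the reality of $a$ beyond $\sigmab{2m}(A^*A)=a^2$; its essential role in this step is a signal that symmetrization, rather than brute-force commutation, is the intended mechanism. You should either replace the iteration with this identity or explicitly track the operators $E',F',G'$ from the proof of Lemma~\ref{lemma:easycomm} so that the exact cancellation $r^{-1}F'=G'r^{-1}$ is visible.
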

\begin{proof}
  Writing
  $$
  \Box_g=D_t^2-D_r^* D_r -r^{-2}(\fib)^2,
  $$
  we first note simply that $D_t^2 \in \Psibf^2(X),$ so that by
  ordinary properties of the calculus,
  $$
i[D_t^2, A^*A]\in \Psibf^{2m+1}(X)
$$
with symbol
$$
2\tau \pa_t(a^2),
$$
yielding the term $L_0.$

Writing
$$
[D_r^*D_r, A^*A]= D_r^*[D_r, A^*A] - [D_r, A^*A]^*D_r.
$$
By Lemma~\ref{lemma:easycomm},
$$
[D_r, A^*A] = E + FD_r
$$
where $\sigmab{2m}(E) = -i\pa_r(a^2)$ and $\sigmab{2m-1}(F)=-i\pa_{\xib}(a^2).$
Since these are purely imaginary,
$$
-[D_r, A^*A]^* = E' + D_r^* F'
$$
where $E'$ and $F'$ have the same symbols as $E$ and $F$
respectively.  Thus
$$
[D_r^*D_r, A^*A]=D_r^* L_r D_r + D_r^* L'+L''D_r.
$$
Here $\sigmab{2m}(L_r) = -2i \pa_r(a^2),$.

Finally, $\fib$ commutes with $A^*A$ by fiber-invariance, and by Lemma~\ref{lemma:easycomm}
$$
[r^{-2}, A^*A]=r^{-1}[r^{-1}, A^*A] + [r^{-1}, A^*A] r^{-1}
$$
equals $r^{-1}L_\fib r^{-1}$ where $L_\fib \in  \Psibf^{s-1}$ has principal symbol $-2i\pa_{\xib}(a^2).$ Note that we have used
our freedom to write $r^{-1}W=W'r^{-1}$ where $W,W'$ have the same
principal symbols.
  \end{proof}

We now state our main propagation theorem in precise form.
        \begin{theorem}\label{theorem:reflection}
          Let $u\in \HbHone^{-\infty}(X)$ be a solution to $P u=f.$  Let
          $\varrho=(\varphi_0, \tau_0) \in \Sbdotstar_{\pa X} X_\fib$ and let
          $U$ be an open neighborhood of $\varrho$ in
          $\Sbstar  X_\fib.$  Then (for either choice of $\pm$ below),
          $$
          \begin{array}{l}
          \WFbhu^s u\cap  U \cap \{\pm\xib<0\} =\emptyset, \\
            \WFbhstaru^{s+1} f \cap U =\emptyset\end{array}
\Longrightarrow \varrho \notin \WFbh^s u.
          $$
          \end{theorem}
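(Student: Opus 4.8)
The plan is to prove Theorem~\ref{theorem:reflection} by a positive commutator argument carried out in the fiber-invariant b-calculus, adapting the edge propagation estimate of \cite[Section~8]{MeVaWu:08} to the present noncompact-fiber setting; the organizing point is that fiber-invariance of the test operators, as already exploited in Lemma~\ref{lemma:maincommutator}, removes from the commutator precisely the contribution that would be singular as $r\to0$.  First I would reduce to an inductive gain-of-half-a-derivative statement.  Since $u\in\HbHone^{-\infty}(X)$ there is $N$ with $\WFbh^{-N}(u)=\emptyset$, and since $\WFbh^{s'}\subset\WFbh^{s}$ and $\WFbhstar^{s'+1}\subset\WFbhstar^{s+1}$ whenever $s'\le s$, it suffices to show that, under the hypotheses of the theorem, $\varrho\notin\WFbh^{s'}(u)$ for some $s'$ with $-N\le s'\le s-\tfrac12$ implies $\varrho\notin\WFbh^{s'+\frac12}(u)$; iterating this from $s'=-N$ (with a shorter final step if necessary) yields the theorem.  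As usual, in order for the pairings below to be a priori meaningful one works with a regularizing family: fixing $s'$, take a commutant $A_\ind$ ($\ind>0$) of order $s'-\tfrac12$, uniformly bounded in $\Psibf^{s'+\frac12}(X)$, converging as $\ind\to0$ to a limiting $A\in\Psibf^{s'+\frac12}(X)$ that is b-elliptic at $\varrho$ and microsupported in $U$; one then proves a bound on $\|A_\ind u\|_{\Hone}$ uniform in $\ind$ and passes to a weak limit, obtaining $Au\in\Hone$, i.e.\ $\varrho\notin\WFbh^{s'+\frac12}(u)$.

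The commutator itself is supplied by Lemma~\ref{lemma:maincommutator}.  Writing $\boxg u=Pu-\pert u=f-\pert u$, one has $\la i[\boxg,A_\ind^*A_\ind]u,u\ra=-2\operatorname{Im}\la A_\ind u,A_\ind f\ra+2\operatorname{Im}\la A_\ind u,A_\ind\pert u\ra$, while by Lemma~\ref{lemma:maincommutator} the left side equals, modulo residual terms, $\la L_rD_rA_\ind u,D_rA_\ind u\ra+\la L_\fib(r^{-1}\fib)A_\ind u,(r^{-1}\fib)A_\ind u\ra+2\operatorname{Re}\la L'A_\ind u,D_rA_\ind u\ra+\la L_0A_\ind u,A_\ind u\ra$.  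The term with $f$ is handled by redistributing orders exactly as in the proof of Lemma~\ref{lemma:ellest}, producing a bound of the form $\varepsilon\|A_\ind u\|_{\Hone}^2+C_\varepsilon\bigl(\|f\|_{\Hmone}^2+\|\widetilde Q f\|_{\Hmone}^2\bigr)$ with $\widetilde Q\in\Psibf^{s'+1}$ microsupported in $U$, which is finite since $s'+1\le s+1$ and $\WFbhstar^{s+1}(f)\cap U=\emptyset$.  The $\pert$ term is of strictly lower order and is absorbed using Corollary~\ref{cor:uniformbounded}.  On the left side, the pieces $D_r^*L'+L''D_r$ and the various lower-order operator factors (of total order at most $2s'$ applied to $u$) are controlled exactly as in Lemmas~\ref{lemma:ellest}, \ref{lemma:movingderivatives} and \ref{lemma:ubddtoQ}, using the inductive hypothesis $\varrho\notin\WFbh^{s'}(u)$ together with the microlocal elliptic regularity of Proposition~\ref{prop:ellipticreg}, which confines the offending wavefront set to $\Sbdotstar X$ and so allows one, after shrinking $U$, to assume that the only part of $U$ meeting the characteristic set consists of the flowouts $\fcal_{I,\varrho}$, $\fcal_{O,\varrho}$ and the point $\varrho$ itself.

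It remains to choose $a_\ind$ so that the principal part of the commutator contributes a term $\gtrsim\|A_\ind u\|_{\Hone}^2$, b-elliptic at $\varrho$, up to terms microsupported in $U\cap\{\pm\xib<0\}$ (where the hypothesis on $u$ applies) and terms already accounted for above.  The decisive algebraic fact is that, because $a_\ind$ is fiber-invariant, the real vector field $\C\pa_t+\pa_\varphi$ annihilates $a_\ind^2$, so the would-be singular term $r^{-2}(\C\taub+\etab)(\C\pa_t+\pa_\varphi)(a_\ind^2)$ in $\hamvf_p(a_\ind^2)$ drops out; using $\xib^2+(\C\taub+\etab)^2=r^2\taub^2$ on the characteristic set, the restriction of $\hamvf_p(a_\ind^2)$ to $\Sigma$ near $\varrho$ reduces to the manifestly nonsingular expression $2\taub\,\pa_t(a_\ind^2)-\tfrac{2\xib}{r}\pa_r(a_\ind^2)-2\taub^2\,\pa_{\xib}(a_\ind^2)$, the coefficients $\xib/r$ and $(\C\taub+\etab)/r$ being bounded on $\Sigma$.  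I would then build $a_\ind$ as $|\taub|^{s'+\frac12}$ times the regularizer $\bigl(1+\ind(\taub^2+\xib^2+(\etab+\C\taub)^2)\bigr)^{-1/2}$ (as in the proof of Proposition~\ref{prop:ellipticreg}), times a fiber cutoff in $\varphi-t/\C$, times an $r$-cutoff, times a factor adapted to the flow of $\Phib$ near its critical manifold $\Sigma\cap\{r=0\}$ --- along which, notably, the linearization of $\Phib$ is nilpotent, so that the in/outflow to the edge is a second-order effect governed along $\Phib$ by $\dot\xib=-(\xib^2+(\C\taub+\etab)^2)$ and $\dot r=-\xib r$ --- chosen so that (after the fiber-invariant simplification above) $\pa_{\xib}(a_\ind^2)$ has a fixed sign near $\varrho$, making the displayed expression definite and elliptic at $\varrho$, while the errors from differentiating the $r$-, fiber-, and $\Phib$-cutoffs are either microsupported in $U\cap\{\pm\xib<0\}$ or fall off $\Sigma$.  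With this choice in hand one absorbs the $\varepsilon$-terms and invokes the $\Hone$-based Dirichlet-form estimate Lemma~\ref{lemma:ellest} to pass between $\|D_tA_\ind u\|$, $\|D_rA_\ind u\|$ and $\|r^{-1}\fib A_\ind u\|$, obtaining the required uniform bound on $\|A_\ind u\|_{\Hone}$.

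I expect the main obstacle to be exactly this last construction: producing a fiber-invariant commutant that is simultaneously b-elliptic at $\varrho$ and yields a positive commutator in the degenerate regime $r\to0$, and arranging that \emph{all} of the commutator error is confined to the monitored region $U\cap\{\pm\xib<0\}$ or to the elliptic set of $P$.  Because the rescaled Hamilton flow $\Phib$ has an entire critical manifold $\Sigma\cap\{r=0\}$ with nilpotent linearization --- rather than an isolated radial point --- the needed positivity cannot be read off from any single term of $D_r^*L_rD_r+(r^{-1}\fib)L_\fib(r^{-1}\fib)+D_r^*L'+L''D_r+L_0$ but must be extracted from the weighted combination as a whole, with the $r$-weight, the homogeneity weight, and the $\Phib$-flow cutoff in $a_\ind$ all balanced against one another; moreover the unmonitored null flowout $\fcal_{O,\varrho}$ (or $\fcal_{I,\varrho}$) limits onto $\varrho$, so that the commutant's microsupport unavoidably meets it near $\varrho$, and one must use the explicit structure of these flowouts to ensure no uncontrolled contribution arises there.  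This is precisely where the argument departs from ordinary real-principal-type propagation and from the compact edge setting of \cite{MeVaWu:08}; the one remaining difference from the latter, the noncompactness of the helical fibers, is handled as elsewhere in the paper by carrying out the whole argument in the uniform fiber-invariant calculus and on the quotient $\Sbstar X_\fib$, where the uniform symbol estimates encode the required uniformity in $t$.
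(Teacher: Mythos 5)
Your overall architecture coincides with the paper's: induction gaining half a b-derivative with a regularized commutant family, the operator-level decomposition of $i[\Box_g,A^*A]$ supplied by Lemma~\ref{lemma:maincommutator}, completion of the radial and fiber terms to the full Dirichlet form by using the equation once more (equivalently, via Lemma~\ref{lemma:ellest}), quantitative elliptic regularity to convert that into $c\|Bu\|_{\Hone}^2$, and the same treatment of the $f$- and $\pert$-terms. The proof is nonetheless incomplete at exactly the point you flag as the expected obstacle, and the mechanism you propose for closing it is not the one that can work.

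You ask for a commutant whose localizing factors in $r$ and in the fiber variable $\varphi-t/\C$ have derivatives ``either microsupported in $U\cap\{\pm\xib<0\}$ or [falling] off $\Sigma$.'' That cannot be arranged: any symbol elliptic at $\varrho$ and compactly microsupported near it must be differentiated somewhere along the outgoing flowout $\fcal_{O,\varrho}$, which enters every neighborhood of $\varrho$ and carries no a priori control, and the $\pa_r$- and $\pa_t$-derivatives of the localizer are likewise supported at $\varrho$ itself and along $\fcal_{O,\varrho}$, not only in the monitored incoming region. The paper's resolution is quantitative rather than geometric: the localization in $r$ and in the fiber is folded into the single escape function $\phi=-\xibhat+\omega/(\beta^2\delta)$ with $\omega=r^2+(\varphi-t/\C-\varphi_0)^2$, and the commutant is $\chi_0(1-\phi/\delta)$ times cutoffs in $\xibhat$, in $\xibhat^2+(\C\taubhat+\etabhat)^2$, and in $\sgn\taub$, whose derivatives \emph{do} land in the control region or off $\Sigma$. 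Because $\pa_r\omega$ and $\pa_t\omega$ vanish at the fiber while $r$ and $\abs{\varphi-t/\C-\varphi_0}$ are bounded by $\sqrt2\,\beta\delta$ on the support, the symbols of $L'$, $L''$, $L_0$ and of $\tilde\pert A^*A$ all take the form $b^2f$ with $f$ of size $O(\beta^{-1})$ or $O(\delta)$ relative to the main positive term $b^2$ produced by $\pa_{\xib}$ falling on $\chi_0$; taking $\beta$ large and then $\delta=\delta(\beta)$ small lets these be absorbed into $c\|Bu\|^2_{\Hone}$ by Cauchy--Schwarz. Without this large-parameter, relative-smallness mechanism (or a substitute), the derivative-of-localizer terms at $\varrho$ and along $\fcal_{O,\varrho}$ remain uncontrolled and the estimate does not close. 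A secondary omission: the specific choice $\chi_0(s)=e^{-1/s}$, with $\chi_0'=s^{-2}\chi_0$, is what makes $a^2b^{-1}$ a symbol, which is needed to pair the inhomogeneity and the first-order errors against the main term; a generic cutoff would not do.
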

          \begin{remark} \label{rem:big}\mbox{}\\
\begin{enumerate}\item
            Recall that since $\varrho \in \Sbdotstar_{\pa X} X_\fib,$ absence of
        $\varrho$ from the wavefront set is a \emph{global,
          fiber invariant} statement (as are the hypotheses, with
        uniformity along the fibers built in).
\item \label{propagationresult}
  Say $f=0.$  By elliptic regularity there is a priori no
  wavefront set of $u$ in $\xib\neq 0$ over $\pa X,$ so the hypothesis of
  the theorem can be viewed as dealing only with the interior of $X,$ where
  the points with $\pm\xib<0$ are those that are ``incoming'' toward the
  string or ``outgoing'' from it.  In particular, say we assume that there is no wavefront set
  of $u$ (uniformly) near all rays with $\taubhat=\tau_0$ striking a single fiber
  $\varphi-t/\C=\varphi_0,$ i.e., near the set  $\fcal_{I, \varphi_0,
    \tau_0};$ to further clarify signs, let us take $\tau_0=+1$ so
  that $\xib>0$ on $\fcal_{I,\varphi_0, \tau_0}.$  The uniformity
  assumption means there exist $r_0>0,$ $\delta>0$ such that (using
  the homogeneous ``hat'' coordinates of \eqref{hatcoordinates})
$$
\{r\in (0, r_0),\ \abs{\varphi-t/\C-\varphi_0}<\delta,\ \mp
  \xibhat\in (0,\delta),\ \sgn \tau=\tau_0\} \cap \WF u=\emptyset,
  $$
  with uniform estimate (i.e.\ by testing by an element of $\Psibf(X)$).
In particular, then, for $r<\min(r_0,\delta/2),$ and
$\abs{\varphi-t/\C-\varphi_0}<\delta$,  $\sgn \tau=\tau_0$, the
  points with $  \xibhat \in (0,\delta)$ are not in the wavefront set of $u$; additionally $\{\xibhat>\delta\}\cap \WF u=\emptyset$ since this set is disjoint from $\Sigma.$  The set
  $r=0, \xibhat<0$ is also disjoint from the wavefront set by elliptic
  regularity.  Hence we have shown that the hypotheses of
  Theorem~\ref{theorem:reflection} are satisfied at $(\varphi_0,
  \tau_0).$

  Thus the theorem can be interpreted as saying that uniform
  regularity along $\fcal_{I,\varrho}$ yields regularity at $\varrho$
  itself, and hence along $\fcal_{O,\varrho}$ as well, by closedness of
  wavefront set.  Hence this is the sought-after propagation of
  singularities into and out of the fiber.

  The theorem includes the converse statement as well: outgoing
  regularity yields incoming regularity, by backward-in-time
  propagation.  Note, though, that if $\Box_g u=f,$ then setting
  $\tilde{u}=u(-t,x)$ and $\tilde f=f(-t,x)$ implies
  $$
\widetilde{\Box_g}\tilde u=\tilde f,
$$
where $\widetilde{\Box_g}$ is the the d'Alembertian for the cosmic
string with $\C$ replaced by $-\C.$  Hence it will suffice to show
that incoming regularity implies outgoing regularity (for every
value of $\C$).  It also suffices to fix one sign of $\tau_0:$ since
$\Box_g$ has real coefficients, applying a propagation theorem valid for
$\tau_0>0$ to the equation $\overline{P} \overline{u}=\overline{f}$ proves the
corresponding result for $\tau_0<0.$
\item
Consider a solution $u$ to $P u=0$ that has a single
bicharacteristic in the
wavefront set arriving at $\pa X$ at a point $\varrho \in \Sbstar_{\pa X}X.$  Applying the
theorem in all fibers \emph{except} that containing $\varrho$
tells us that a single point in $\WFbh u$ striking the cosmic string may at most produce
$\WFbh u$ leaving the string \emph{everywhere} along the fiber of
$\varrho,$ in the same $\tau$-component of the characteristic set;
this is to say that $\varphi-t/\C$ and $\tau$ are conserved in the
interaction, but $t$ is (apparently) not.
\item
  When one proves microlocal propagation of regularity, the
contrapositive is usually interpreted as yielding
propagation of wavefront set along bicharacteristics, perhaps
of a generalized sort.  Here, the generalization would have to be
quite broad to permit such a statement: a singularity arriving at
$r=0$ along some ray in $\fcal_{I,\varphi_0, \tau_0}$ must result in a failure of uniform wavefront set
estimates along all rays in $\fcal_{O,\varphi_0, \tau_0}$. It is the \emph{uniformity} that is the difficulty
here: rather than produce wavefront set along some particular outgoing ray, the
effect might be merely to produce nonuniformity of estimates
along this family of rays as $\abs{t} \to \infty.$  An appropriately
defined notion of wavefront set at timelike infinity might allow us to
recover a more traditional propagation statement, but we will not pursue it here.
\item
Our hypotheses on the inhomogeneity $f$ are phrased in terms of
regularity with respect to $\Hmone,$ the dual space to $\Hone,$ which
away from $r=0$ agrees with $H^{-1};$ thus the hypotheses involve one
less derivative on $f$ than the concluded regularity on $u,$ as befits
a second-order hyperbolic equation.
\end{enumerate}
\end{remark}

\begin{proof}
The strategy of proof is descended from the work of
Melrose--Sj\"ostrand \cite{MeSj:78}, \cite{MeSj:82}
  and is more directly inspired by the presentations of Vasy
  \cite{Va:04} and then Melrose--Vasy--Wunsch \cite{MeVaWu:08}.
  
As noted in the remarks above, it suffices to consider the case where
our wavefront set assumption is in $\xibhat>0$ and we also have $\tau_0>0.$
        
        We construct a test operator $A$ as follows. Define
        $$ 
\omega=r^2 + (\varphi-t/\C-\varphi_0)^2
$$
(where as always, we view $\varphi-t/\C$ and $\varphi_0 \in\RR/ 2\pi\ZZ$ as
equivalence classes
with the distance squared being the shortest of the possible values)
and set
$$
\phi=-\hat\xib+\frac{1}{\beta^2 \delta}\omega
$$
where $\beta$ and $\delta$ are parameters to be set later. Let $\chi_0$ be smooth, supported in $[0,\infty),$ and with
$\chi_0(s)=e^{-1/s}$ for $s>0,$ hence $\chi'_0(s) = s^{-2} \chi_0(s).$
Let $\chi_1$ be supported on $[0,\infty)$ and equal to $1$ on
$[1,\infty)$ and with $\chi_1'\geq 0$ and supported in $(0,1).$  Let
$\chi_2 \in \mathcal{C}_c^\infty(\RR)$ be supported in $[-2c_1,2c_1]$
and equal to $1$ on $[-c_1,c_1]$ for some $c_1 > 0$.  Let $\chi_3=\chi_1.$ Our test operator $A$ will be defined to have principal symbol
$$
a= \chi_0\big(1-\frac \phi\delta\big)\chi_1\big(-\frac
{\xibhat}\delta+1\big) \chi_2(\xibhat^2+(\C
\taubhat+\etabhat)^2)\chi_3(\sgn (\tau_0) \taubhat).
$$
This symbol is fiber invariant, as it depends on $t,\varphi$
only via $\varphi-t/\C,$ hence we may quantize it to a fiber invariant operator $A
\in \Psibf^0(X)$.

Note that on $\supp \chi_1(\bullet),$ $\xibhat\leq \delta$.  Meanwhile, on $\supp
\chi_0(\bullet),$ $\omega<\beta^2 \delta^2 +\beta^2 \delta \xibhat;$ owing
to the support constraints on $\xibhat,$ then, $\omega<2\delta^2
\beta^2$ (and thus both $r$ and $\abs{\varphi - t/\C-\varphi_0}$ are bounded above by $\sqrt{2}\delta\beta$). On $\supp
\chi_0(\bullet)$, $\xibhat\geq-\delta$, so overall $\xibhat \in
[-\delta,\delta]$ on $\supp a.$  Note also, for later use, that
$\abs{1-\phi/\delta}<4$ on $\supp a$.

The parameter $\beta$ will later be used to obtain
sufficient positivity of commutator terms, and may need to be taken
large; thus, note that \emph{for any $\beta \in (0, \infty)$ there
  exists $\delta>0$ such that $\supp a \cap \{\xibhat>0\} \subset U.$}
We alert the reader that at the moment when $\beta$ is taken
large in the following argument, we simultaneously adjust
$\delta=\delta(\beta)$ to maintain the desired support properties.

Now applying Lemma~\ref{lemma:maincommutator} (as well as recalling that $[\pert,
A^*A] \in \Psibf^{0}(X)$ since $\pert$ is a uniform b-operator of order $1$)
and pairing with $u$
yields
\begin{equation} \label{keyterms}
\begin{aligned}
2 \Im \ang{f, A^*A u} &= \la i [P,A^*A] u, u \ra + \la i(P - P^*)A^*A u, u \ra \\
 &=\ang{(D_r^* L_r D_r +(r^{-1} \fib) L_\fib (r^{-1} \fib)+ D_r^* L' +
L'' D_r + L_0 )u,u}\\
	&\quad +\la i[A^*A, \pert]  u, u \ra+ \la i (P-P^*)A^*A u,u \ra \\
&= \ang{(D_r^* L_r D_r +(r^{-1} \fib) L_\fib (r^{-1} \fib)+ D_r^* L' +
	L'' D_r + L_0 + \tilde{\pert}A^*A  )u,u}\\
&\quad + \la  R_{0}' u,u \ra 
\end{aligned}
\end{equation}
where the $L$ operators are defined as in
Lemma~\ref{lemma:maincommutator}, $\tilde{\pert} = i(\pert-\pert^*)
\in \Diff_\bu^1(X)$, and there are further non-fiber-invariant terms $R'_0 \in \Psib^0(X)$ arising from commuting with $\pert$.

Now let $B \in
\Psibf^{-1/2}$ have symbol $$b=2\abs{\taub}^{-1/2} \delta^{-1/2} (\chi_0
\chi_0')^{1/2} \chi_1 \chi_2 \chi_3.$$  This is the quantity appearing in
commutator terms with derivatives hitting the $\chi_0^2$ factor of
$a^2;$ its support is that same as $\supp a$.  In particular,
$$
\sigmab{-1}(L_r)=\sigmab{-1}(L_\fib)= 4 a \pa_{\xib} a
=b^2+e'+e''
$$
where $\supp e'\subset \{\xibhat>0\},$ and $\supp e'' \cap
\Tbdotstar X=\emptyset:$ the $b^2$ term comes, as noted above, from
the derivative falling on $\chi_0,$ while the term with a derivative
falling on $\chi_1$ gives $e'$ and the term with a derivative on
$\chi_2$ gives $e''.$  Note, for later use, that owing to the specific
form of the cutoff function $\chi_0,$ we have arranged that $a^2
b^{-1}$ is a smooth $\fib$-invariant symbol of order $1/2.$

Thus,
\begin{multline}
D_r^* L_r D_r +(r^{-1} \fib) L_\fib (r^{-1} \fib)=D_r^* B^*B D_r +(r^{-1} \fib) B^*B (r^{-1} \fib) +D_r^* (E'+E''+R_{-2}) D_r \\+(r^{-1} \fib) (E'+E''+R_{-2}) (r^{-1} \fib)
\end{multline}
where
\begin{itemize}
\item
  $\displaystyle E' \in \Psibf^{-1}(X),\ \opWFb(E') \subset \xibhat^{-1}((0,\infty))
$
\item
  $\displaystyle \displaystyle E'' \in \Psibf^{-1}(X),\ \opWFb(E'') \cap\Tbdotstar X=\emptyset.$
  \item $\displaystyle R_{-2} \in \Psibf^{-2}(X).$
  \item
    And where $E',E'', R_{-2}$ will now mean potentially \emph{different} operators
    with the above properties in each occurrence of these symbols.
\end{itemize}

Now we turn to the $D_r, D_r^*$ terms in \eqref{keyterms}.  We
have $\sigmab{0} (L')=\sigmab{0}(L'')=2 a \pa_r a,$ and since the only
$r$-dependence of $a$ is in the $\chi_0(1-\phi/\delta)$ factor and $\chi_0\chi_0'\chi_1^2\chi_2^2\chi_3^2 = \frac{1}{4}|\tau|\delta b^2$ we may
write these symbols as 
$$
-\frac{1}{2}\abs\taub \delta b^2 \pa_r \phi=-\abs\taub\frac{1}{\beta^2 \delta} r b^2.
$$
Recall that $r< \sqrt 2 \delta \beta$ on $\supp b,$ so that in fact
$$
\sigmab{0}(L')=b^2 f_1'
$$
for $f_1' \in S^{1}$ with $$\abs{f_1'} \leq \sqrt 2 \abs\taub \beta^{-1},$$ supported on
any desired open neighborhood of $\supp b.$  Analogously,
$\sigmab{0}(L'')=b^2 f_1''$ with the same properties.

Likewise, the symbol of the $L_0$ term involves only $t$ derivatives falling
on $\chi_0,$ hence
$$
\sigmab{1}(L_0)=-\taub \abs\taub b^2 \pa_t \phi=2\C^{-1}\taub \abs\taub\frac{1}{\beta^2 \delta} (\varphi-t/\C-\varphi_0) b^2;
$$
Again we thus find
$$
\sigmab{1}(L_0)=b^2 f_2
$$
where $f_2$ is estimated by
$$
\abs{f_2} \leq 2 \sqrt 2 \C^{-1} \abs{\taub}^2\beta^{-1},
$$
since $\abs{\varphi-t/\C-\varphi_0}<\sqrt{2} \beta\delta$ on $\supp b.$

We now consider the $\tilde{\pert}A^*A$ term, which is also first order. Note that due to the relationship $\chi_0'(s) = s^{-2}\chi_0(s)$ we have
	$$ a^2 = \frac{1}{2} |\tau|\delta\left(1-\frac{\phi}{\delta}\right)b^2, $$
and as noted above, $\left|1 - \phi/\delta \right| \le 4$ on the support of $b$. Then taking $\sigmab{1}(\tilde{\pert}) = \upsilon$ we have
	$$ \sigmab{1}(\tilde{\pert} A^*A) = \frac{\upsilon}{2} |\tau|\delta \left(1-\frac{\phi}{\delta}\right)b^2 = \tilde{f}_2b^2 $$
where
	$$ |\tilde{f_2}| \le 2|\upsilon \tau| \delta. $$ 
In the following argument we will pick $\beta$ large to make $f_1'$
and $f_2$ above sufficiently small. As noted earlier, there is then a
$\delta(\beta)>0$ for which $a$ has the requisite support properties
whenever $\delta\leq\delta(\beta)$. To
make $\tilde{f}_2$ sufficiently small, we pick $\delta \le
\delta(\beta)$.

Assembling this information yields \begin{align*}L' &=B^* B
                                                      F_1'+R_{-1},\\
                                     L'' &=B^*B
F_1''+R_{-1},\\ L_0 + \tilde{\pert}A^*A &=B^*BF_2+R'_0\end{align*} where $F_1',\ F_1''$ have symbols
$f_1',f_1'',$ $F_2$ has symbol $f_2+ \tilde{f_2},$ where $R_s \in
\Psibf^{s}(X)$, and where $R_0' \in \Psib^0(X)$ (with the notation recycled to indicate a different
remainder term in each case).

Thus, absorbing further lower order commutator terms in the
ever-changing $R_\bullet$ terms below and including a $(1/2)B^*B \pert$
term into $R_0'$ gives
$$
\begin{aligned}
2\Im &\ang{f, A^*A u}\\
&= \big\langle\big(B^*B (D_r^* D_r + r^{-2} \fib^2)+D_r^* (E'+E''+R_{-2})
  D_r \\
  	&\quad +(r^{-1} \fib) (E'+E''+R_{-2}) (r^{-1} \fib) 
+  R_{-2} r^{-1} \fib + D_r^*(B^*B F_1'+R_{-1})\\ &\quad+B^*B F_2
+R'_0\big) u,u\big \rangle\\
&= \big\langle\frac 12 B^*B (D_r^* D_r + r^{-2} \fib^2 +D_t^2-P)+D_r^* (E'+E''+R_{-2})
  D_r \\ &\quad+(r^{-1} \fib) (E'+E''+R_{-2}) (r^{-1} \fib)
+R_{-2}r^{-1} \fib  + D_r^*(B^*B F_1'+R_{-1})  \\&\quad +B^*B F_2
+R'_0\big) u,u\big \rangle\\
&=
(1/2)\big(\norm{D_t Bu}^2+\norm{r^{-1} \fib B u}^2 + \norm{D_r B u}^2
- \ang{f, B^* B u}\big) \\ &\quad + \ang{(E'+E''+R_{-2})D_r u, D_r u}
+\ang{(E'+E''+R_{-2}) (r^{-1} \fib)u , r^{-1} \fib u}\\
&\quad + \ang{B F_1'u,
D_r B u} +\ang{B F_2u,B u} 
+\ang{R_{-2}r^{-1}\fib u, u} + \ang{R_{-1}u,D_r  u} + \ang{R'_0u,u}
\end{aligned}
$$
Note that for brevity, we have dropped terms of the form $(\bullet) D_r$ in the
first line
favor of terms $D_r^*(\bullet),$ since they have the same imaginary
part modulo commutator terms absorbed elsewhere.  As before, $R_0'$
refers to a non-fiber-invariant operator.

We now proceed with our inductive argument.  Suppose that
$$
\varrho\notin\WFbhu^m u
$$
with $m\leq s-1/2;$ it will suffice to show that
$$
\varrho\notin\WFbh^{m+1/2} u.
$$

To this end we shift around the orders in the commutator computation as follows:
for $\ind \in (0,1),$ fix 
$$
Q^s_\ind \in \Psi^{s}(\RR)
$$
given by left quantization of
$$
\ang{\taub}^s (1+ \ind\abs{\taub}^2)^{-s/2},
$$
and commuting with $\Box_g.$
Thus, $Q^s_\ind$ is a family that is uniformly bounded in
$\Psi^s(\RR),$ convergent in $\Psi^{s+\epsilon}(\RR)$ to $\ang{D_t}^s$
modulo a fixed, $s$-dependent smoothing operator.  We would like to
view $Q^s_\ind$ as lying in the calculus $\Psibf(X)$ but the mild
technical snag is that $q$ is not a symbol in $(\xi,\tau,\eta).$
However, it is such a symbol when cut off away from $\xi=\eta=0$,
hence setting
$$
A_{m+1}\equiv  A Q_\ind^{m+1} 
$$
we easily see that if $A=\tOpb (a)$ then $A_{m+1}=\tOpb(aq)$ and lies in
our calculus, hence we may treat it for all practical purposes as
lying in the calculus.\footnote{Similar considerations famously occur in the study of
$D_t-\sqrt{\Lap_g}$ on $\RR\times M$ with $M$ a compact manifold:
$\sqrt{\Lap_g} $is not a pseudodifferential operator on the product,
but this is of little importance;
see, e.g., \cite{DuGu:75}.}
We omit the index $\ind$ from the notation in order to keep it
uncluttered, but here and henceforth on we decorate operator families
bounded in $\Psibf^s(X)$ with the index $s$ as a bookkeeping aid.  We
will use consistent notation for the operator $B$ letting $B_{m+1/2}$
denote the family $Q^{m+1}_\ind B$ (hence again indexing the family by the
order of the operator space in which it is bounded).
 Note that the operators
$F_\bullet$ previously had a subscript referring to the index, and
these remain unchanged (indeed, they are not families).  The $E',E'',R$
operators below are replaced by families as well, with the same
indexing convention employed.

We then have 
$$
\begin{aligned}
2\Im &\ang{f, A_{m+1}^*A_{m+1} u}= \\
&=
(1/2)\big(\norm{D_t B_{m+1/2}u}^2+\norm{r^{-1} \fib B_{m+1/2} u}^2 + \norm{D_r B_{m+1/2} u}^2  \\ &\quad
-\ang{f, B_{m+1/2}^* B_{m+1/2} u}\big) + \ang{(E_{2m+1}'+E_{2m+1}''+R_{2m})D_r u, D_r u}  \\ &\quad
+\ang{(E_{2m+1}'+E_{2m+1}''+R_{2m}) (r^{-1} \fib)u , r^{-1} \fib u}  \\ &\quad + \ang{B_{m+1/2} F_{1}'u,
D_r B_{m+1/2} u} + \ang{B_{m+1/2} F_{2} u,B_{m+1/2} u}  \\ &\quad+ \ang{R_{2m+1}u,D_ru}  + \ang{R_{2m}u,r^{-1}\fib u}
+\ang{R'_{2m+2}u,u},
\end{aligned}
$$
again with $R'_{2m+2} \in \Psib^{2m+2}(X)$, now including terms
arising from $[\pert, Q_\ind]$.  We
will treat the first three terms on the RHS as the main positive
terms, either absorbing the rest of the terms into them or else
estimating them by induction or regularity hypothesis.

By elliptic regularity, i.e.\ the quantitative statement (by the
closed graph theorem or inspection of the proof) obtained from Proposition~\ref{prop:ellipticreg},
$$
(1/2)\big(\norm{D_t B_{m+1/2}u}^2+\norm{r^{-1} \fib B_{m+1/2} u}^2 + \norm{D_r B_{m+1/2}
  u}^2\big)\geq c \norm{B_{m+1/2}u}^2_\Hone
$$
for some $c>0$.
Owing to the symbol estimates on the $F$ terms, by taking $\beta$
sufficiently large and $\delta$ sufficiently small, we may apply Cauchy--Schwarz to estimate
$$
\abs{ \ang{B_{m+1/2} F_1'u,
D_r B_{m+1/2} u}+\ang{B_{m+1/2} F_2u,B_{m+1/2} u}} \leq (c/2)\norm{B_{m+1/2}u}^2_\Hone,
$$
thus allowing us to absorb these terms in the main positive term.  Meanwhile,
since $\sigmab{}(A_{m+1}^*A_{m+1})^2/\sigmab{}(B_{m+1/2})\equiv g \in S_\fib^{m+3/2},$ we may apply Cauchy--Schwarz
and the b-symbol calculus to estimate
$$
\abs{\ang{f, A_{m+1}^*A_{m+1} u}} \leq \norm{G_{m+3/2}
    f}_{\Hmone}\norm{B_{m+1/2} u}_\Hone+ \norm{R_{m+1}
    f}_{\Hmone}\norm{R_{m} u}_{\Hone}
$$
with $R_{s} \in \Psibf^{s}(X),$ microsupported in an arbitrary
neighborhood of $\WFbf' B.$  Since by assumption, $m+3/2
\leq s+1$, the $G$ term above is bounded by the assumption that
$\varrho \notin \WFbhstaru^{s+1} f;$ the second term on the RHS is
bounded by the same estimate on $f$ and by the inductive assumption
$\varrho \notin \WFbh^m u.$  The lower order term $\ang{f,
  B_{m+1/2}^* B_{m+1/2} u}$ is likewise easily estimated in the same manner.

Turning to other terms, we find that the $E'$ terms are uniformly
bounded as $\ind \downarrow 0$ by elliptic regularity; the $E''$ terms
are uniformly bounded by our wavefront set hypothesis, since they are
(uniformly) microsupported in the control region $\xib>0;$ all
$R_\bullet$ terms are uniformly bounded by our inductive assumption
$\varrho \notin \WFbh^m u;$ the $R'_{2m+2}$ term is estimated by the
inductive assumption and Corollary~\ref{cor:uniformbounded}.

Putting together the above observations (and lumping the bounded terms
described above, with the exception of $\norm{Gu}$, into a single constant) yields
$$
(c/2) \norm{B_{m+1/2} u}_\Hone^2 \leq \norm{G_{m+3/2}
    f}_{\Hmone}\norm{B_{m+1/2} u}_\Hone+C,
  $$
  hence by a further Cauchy--Schwarz, boundedness of $\norm{G_{m+3/2}
    f}_{\Hmone}$ yields uniform boundedness of $\norm{B_{m+1/2}
    u}_\Hone$ as $\ind\downarrow 0.$
 A standard compactness argument now implies
that
$$
\varrho \notin \WFbhu^{m+1/2} u,
$$
and the inductive step is complete.
\end{proof}

Before proceeding to employ our propagation result to obtain an
existence theorem, we record a corollary
that is essentially just a quantitative restatement.

Let $\proj: \Tbdotstar X \to X_\fib$ be the projection map from the
compressed b-cotangent bundle onto the fiber-quotiented base space. 

\begin{corollary}\label{cor:quantitative}
Fix any constants $r_1>r_0>0,$  and let $\crap \in \Psibf^0(X)$ be elliptic
on $\fcal_{I} \cap \proj^{-1}(\{r \in [r_0, r_1]\}).$  There exists $B
\in \Psibf^0(X)$, elliptic on $\Tbdotstar_{\pa X} X_\fib$, such that for
all $N$ there exists $C$ such that
\begin{equation}\label{quantPoS}
\norm{B u}_{\HbHone^s} \leq  C\big(\norm{\crap u}_{\HbHone^s} +  \norm{u}_{\HbHone^{-N}} + \norm{Pu}_{\HbHonestar^{s+1}}\big)
\end{equation}
  \end{corollary}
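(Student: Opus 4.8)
The plan is to deduce Corollary~\ref{cor:quantitative} from Theorem~\ref{theorem:reflection} by a compactness/contradiction argument, upgrading the qualitative propagation statement to the quantitative estimate \eqref{quantPoS}. The strategy has two ingredients: first, one extracts from the proof of Theorem~\ref{theorem:reflection} (and of Proposition~\ref{prop:ellipticreg}) a \emph{quantitative} version, valid on any compact piece of the characteristic set; second, one patches together local estimates at each fiber point $\varrho \in \Sbdotstar_{\pa X} X_\fib$ using a partition-of-unity/finite-cover argument, combined with standard elliptic regularity away from $\Sbdotstar X$.

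First I would set up the elliptic part. By Proposition~\ref{prop:ellipticreg} (in its quantitative form, obtained via the closed graph theorem as remarked after its proof), any $C \in \Psibf^0(X)$ microsupported in $\Sbstar X_\fib \setminus \Sbdotstar X$ satisfies $\norm{Cu}_{\HbHone^{s+1}} \le C(\norm{u}_{\HbHone^{-N}} + \norm{Pu}_{\HbHonestar^{s}})$; since $\Sigma \cap \{r \ge \epsilon\}$ is where standard real-principal-type propagation applies, the wavefront set of $u$ over $\{r>0\}$ that is relevant here is carried forward along bicharacteristics, and the only bicharacteristics reaching $\Tbdotstar_{\pa X} X$ are the incoming/outgoing ones by Remark~\ref{remark:flowout}. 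Thus the only place where control of $u$ is not automatic (given control of $Pu$ and a coarse $\HbHone^{-N}$ bound) is a neighborhood of $\Sbdotstar_{\pa X} X_\fib$ together with the incoming rays $\fcal_I$ feeding into it.

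Next I would localize to a fiber point. For each $\varrho = (\varphi_0,\tau_0) \in \Sbdotstar_{\pa X} X_\fib$, Theorem~\ref{theorem:reflection} gives regularity at $\varrho$ provided we control $u$ microlocally on $\fcal_{I,\varrho} \cap U \cap \{\xib > 0\}$ (for $\tau_0>0$; the other sign is symmetric) and control $Pu$ near $U$. Tracing the incoming rays $\fcal_{I,\varrho}$ backward along $\Phib$, by the geometry of Section~\ref{sectionBichars} each such ray passes through the annulus $\{r \in [r_0, r_1]\}$, where $\crap$ is elliptic; hence by elliptic propagation along bicharacteristics (Proposition~\ref{prop:elliptic} plus real-principal-type propagation, away from $r=0$) control of $\crap u$ plus control of $Pu$ gives control of $u$ along all of $\fcal_{I,\varrho}$, uniformly in the fiber parameter $s$. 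Feeding this into (the quantitative form of) Theorem~\ref{theorem:reflection} yields, for a suitable $B_\varrho \in \Psibf^0$ elliptic at $\varrho$,
\[
\norm{B_\varrho u}_{\HbHone^s} \le C\big(\norm{\crap u}_{\HbHone^s} + \norm{u}_{\HbHone^{-N}} + \norm{Pu}_{\HbHonestar^{s+1}}\big).
\]
Because $\Sbdotstar_{\pa X} X_\fib$ is compact (it is $S^1_{\varphi_0} \times S^0_\tau$, the $\tau=0$ set being excluded from $\Sigma$), finitely many of the elliptic sets of the $B_\varrho$ cover it; combining the resulting finitely many estimates, using a microlocal partition of unity subordinate to this cover, and gluing on the elliptic estimate off $\Sbdotstar X$, produces a globally elliptic $B \in \Psibf^0(X)$ satisfying \eqref{quantPoS}. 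One must also absorb the error terms: each application of an elliptic parametrix or of the propagation estimate contributes residual terms controlled by $\norm{u}_{\HbHone^{-N'}}$ for various $N'$, but a single fixed $N$ suffices after shrinking, since the coarse norm only ever appears multiplied by a constant.

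\textbf{The main obstacle} will be handling the uniformity in $t$ correctly throughout the gluing. The wavefront set $\WFbh u$ over $\Tbdotstar X$ is a statement about the \emph{fiber-quotiented} space, so ellipticity of $\crap$ on $\fcal_I \cap \proj^{-1}(\{r\in[r_0,r_1]\})$ must be read as uniform lower bounds on a $t$-noncompact helical set (as spelled out in the displayed set after the definition of microsupport), and the backward bicharacteristic flow from $\fcal_{I,\varrho}$ must be controlled with constants independent of which point of the fiber we started at. This is exactly the uniform-$t$ bookkeeping that permeates Sections~\ref{sec:elliptic}--\ref{sec:propagation}; the key point making it work is that the relevant bicharacteristic segments (from $\{r \in [r_0,r_1]\}$ down to $r=0$) have length $O(r_1)$ in affine parameter and the coefficients of $\Phib$ are fiber-invariant, so the propagation estimate transports uniformly. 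The remaining bookkeeping --- tracking Sobolev orders, checking that $m+3/2 \le s+1$ is respected, and that the parametrix remainders land in the right weighted b-Sobolev spaces --- is routine given Proposition~\ref{prop:elliptic}, Lemma~\ref{lemma:WFinclusions}, and Corollary~\ref{cor:uniformbounded}.
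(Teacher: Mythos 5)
Your proposal is correct and follows essentially the same route as the paper: use ordinary interior propagation of singularities to transport control from the annulus $\{r\in[r_0,r_1]\}$ where $\crap$ is elliptic down to the incoming rays near $r=0$, verify (as in item~\ref{propagationresult} of Remark~\ref{rem:big}) that the hypotheses of Theorem~\ref{theorem:reflection} then hold at each $\varrho$, pass to the quantitative form of that theorem via the closed graph theorem (or direct inspection of its proof), and sum finitely many such estimates using compactness of $\Sbdotstar_{\pa X}X_\fib \cong S^1\times S^0$. Your extra attention to the uniformity in $t$ of the interior propagation step is a reasonable elaboration of what the paper leaves implicit, but it is not a departure in method.
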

This result can of course be localized in fibers and microlocalized in
$\tau$, but this global version is all we need below.\begin{proof} Ordinary propagation of singularities in the
interior allows us to take $r_0$ as small as desired.  By the
observations in
item~\ref{propagationresult} of Remark~\ref{rem:big}, control of
$\crap u$
plus elliptic regularity implies that the hypotheses of the propagation theorem
are fulfilled (with appropriate choice of signs) at every $\varrho$, i.e., we have the desired regularity  for all values of
$\varphi_0$ and $\tau_0$, either in the region $\xi>0$ for $\tau_0>0$ or
$\xi<0$ for $\tau_0<0.$  Thus, the theorem (technically either with
the closed graph theorem applied to obtain the quantitative statement
from the qualitative one, or else from direct examination of the
quantitative estimate that proves the theorem) yields the estimate
\eqref{quantPoS} where $B$ is microsupported near any desired
$\varrho\in \Tbdotstar_{\pa X} X_\fib$.  Since the cosphere bundle of
$\Tbdotstar_{\pa X} X_\fib$ is compact (with coordinates $\varphi_0
\in S^1,$ $\hat \tau \in \pm 1$), we can sum up these estimates
and assume $B$ is elliptic on all $\Tbdotstar_{\pa X} X_\fib$.
\end{proof}

\section{Causal solutions to the wave equation}\label{sec:causal}

We now consider the problem of finding causal solutions to the wave
equation: given an appropriate $f,$ we wish, e.g., to find $u$ such that
$$
Pu=f
$$
in the sense of distributions and such that the support of $u$ is in
the asymptotically forward-in-time Hamilton flowout of the support of $f.$
Recall the notion of forward-in-time flowout is not very well-defined
locally, as $t$ is not monotone along some bicharacteristics. However, along
every bicharacteristic not reaching $r=0$, $t$ is
\emph{eventually} monotone along the flow. Whether $t$ is
aymptotically increasing or decreasing on such bicharacteristics is
determined by the sign of $\taub$, a conserved quantity.

Thus for a set $S \subset \Sigma,$ let $\Phi_\pm(S)$ denote the maximally
extended generalized flowout of $S$ in the asymptotically
forward/backward time-direction, over $X^\circ$:
$$
\Phi_{\pm}(q)=\bigcup_{s \cdot \sgn \tau(q) \gtrless 0} \Phi_s(q)
$$
where on incoming/outgoing rays we take $\Phi_s(q)$ to be undefined for parameter values beyond
those where it reaches $r=0.$

We will not trouble to
define the broken flow across the string at $r=0,$ since the global
appearance of singularities there seems unavoidable.

The goal stated above turns out to be too much to ask: we cannot
expect nontrivial solutions supported in $t>T_0$ for any $T_0$, e.g.\ since
the mode-by-mode equation obtained by separation of variables in
$\Box_g$ is
\emph{elliptic} for $r<\C$ and hence enjoys unique continuation.
(See
\cite{MoWu:21} for a discussion of 
the angular mode equation.)
So we accept instead a solution on
a(n arbitrarily large) compact set with the minimal wavefront set that
the propagation theorem above would permit: the forward flowout of
$\WF f$ together with the forward flowout of the string itself, for
all $t.$  (Again, cf.\ \cite{MoWu:21}, where the authors performed similar analysis for
angular mode solutions; this simplified setting did not allow the excitation of
singularities emerging from the string, however.)

Recalling that
$$
P=\Box_g+\pert
$$
where $\pert \in  \Diff_{bu}^1(X)$ is a first-order perturbation, we now strengthen our
hypotheses on $\pert$ in order to obtain some necessary unique
continuation results for the perturbed equation.  We offer two different auxiliary hypotheses on $\pert$
that yield stronger solvability results.

\begin{hyp}{$\mathcal{C}^\omega$}\label{hyp:analytic}
$\pert$ has analytic coefficients.
  \end{hyp}

  \begin{hyp}{$(t,\varphi)$}\label{hyp:tphi}
$\displaystyle [\pa_t,\pert]=[\pa_\varphi, \pert]=0.$
\end{hyp}



\begin{theorem}\label{theorem:forward}
  Given compact sets $K_0\subset K \subset X$ with  $K_0 \subset K^\circ$,
  there exists a
finite dimensional space $N \subset H_b^\infty(X)\cap \E'(X)$ such that if
$f \in \dot{H}_b^{m}(K_0)\cap N^\perp$ ($L^2$ orthocomplement)
there exists $u \in H_b^{m+1}(K^\circ)$ with
$$
Pu=f\quad \text{ on } K^\circ.
$$
Furthermore, $u$ satisfies
\begin{equation}\label{WFrelation}
\WFb u\backslash \WFb f \subset \fcal_O\cup \Phi_+ (\WFb
f\cap \Sigma).
\end{equation}

The solution $u$ is unique on $K^\circ$ modulo a distribution $w$ with
$\WFbh w \subset \fcal _O.$

If $\pert$ additionally satisfies either Hypothesis~\ref{hyp:analytic} or \ref{hyp:tphi}, then a (unique in the above sense)
solution exists for \emph{all} 
$f \in \dot{H}_b^{m}(K_0)$.
\end{theorem}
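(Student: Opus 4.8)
The plan is to combine the quantitative propagation estimate of Corollary~\ref{cor:quantitative} with the Duistermaat--H\"ormander duality argument \cite{DuHo:72} for operators of real principal type, enforcing forwardness by a complex absorbing potential. Fix compact sets $K \subset K'$ (closures of Lipschitz domains) with $K_0 \subset K^\circ$ and $K \subset (K')^\circ$, and let $W \in \mathcal{C}^\infty_c\big((K')^\circ \setminus K\big)$ be real with $W \ge 0$, supported in $\{t < T_-\}$ for some $T_-$ below the $t$-range of $K$. Using the bicharacteristic geometry of Section~\ref{sectionBichars}---every null bicharacteristic not limiting onto $r=0$ leaves $\{r \le \abs{\C}\}$ with $t \to -\infty$ along the backward flow, while remaining (by \eqref{flatgeodesic}) within $r \lesssim \abs{t}$---one chooses $W$ so that it is elliptic on a hypersurface met by every backward bicharacteristic issuing from $\Sigma$ over $K_0$ that does not limit onto $r=0$. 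Put $P_W = P - iW$. Since $P^* = \Box_g + \pert^*$ is again a perturbed wave operator of the form treated above and $iW \in \Diff_\bu^0(X)$, the operator $P_W^* = P^* + iW$ lies in the same class, so microlocal elliptic regularity (Proposition~\ref{prop:ellipticreg}), interior propagation of singularities over $\{r>0\}$, and Corollary~\ref{cor:quantitative} all apply verbatim with $P$ replaced by $P_W^*$.

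The heart of the matter is an a priori estimate for $P_W^*$: for every $N_0$ there is $C$ with
\[
\|v\|_{\dot H_b^{-m}(K')} \le C\big(\|P_W^* v\|_{\dot H_b^{-m-1}(K')} + \|v\|_{\dot H_b^{-N_0}(K')}\big), \qquad v \in \mathcal{C}^\infty_c\big((K')^\circ\big).
\]
I would prove this in the usual way. A distribution $v$ supported in $K'$ vanishes, hence is smooth, near $\pa K'$, so by interior propagation its $\WFb$ is carried, backward along the bicharacteristic flow, out of $K'$---unless the bicharacteristic limits onto $r=0$, in which case it does so along an incoming ray run backward, i.e.\ along an outgoing ray. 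The regularity still missing along those outgoing rays, and on the fibers of $\pa X$, is precisely what Corollary~\ref{cor:quantitative} supplies from control on the incoming rays $\fcal_I$---and the incoming rays, followed backward, do leave $K'$, so the preceding step controls them. Assembling the elliptic estimate off $\Sigma$, finitely many interior propagation estimates, and Corollary~\ref{cor:quantitative}, with the lower-order contributions of $\pert^*$, $W$ and the commutators absorbed (and after passing through the $\Hone$-based spaces and translating via Lemma~\ref{lemma:spaces1}), yields the displayed bound. I expect this to be the main obstacle: the control regions covering $\Sigma$ over $K'$ must be arranged so that the regularity imported along incoming rays is exactly what the nonstandard, fiber-global string estimate consumes in returning regularity on the outgoing rays and at the fibers, with all estimates uniform in the noncompact fiber direction. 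Granting the bound, compactness of $\dot H_b^{-m}(K') \hookrightarrow \dot H_b^{-N_0}(K')$ (Rellich, $K'$ compact) makes $\mathcal{N} \equiv \{v \in \dot H_b^{-m}(K') : P_W^* v = 0\}$ finite-dimensional, and elliptic regularity together with propagation gives $\mathcal{N} \subset H_b^\infty(X) \cap \E'(X)$; take $N$ to be the span of the complex conjugates of a basis of $\mathcal{N}$.

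Granting the a priori estimate, the sharpened bound $\|v\|_{\dot H_b^{-m}(K')} \lesssim \|P_W^* v\|_{\dot H_b^{-m-1}(K')}$ holds for $v \perp \mathcal{N}$, so for $f \in \dot H_b^m(K_0) \cap N^\perp$ the functional $P_W^* v \mapsto \la f, v \ra$ is well defined and bounded on the range of $P_W^*$ in $\dot H_b^{-m-1}(K')$; by Hahn--Banach and the extendible/supported duality it is represented by some $u \in H_b^{m+1}\big((K')^\circ\big)$ with $P_W u = f$ on $(K')^\circ$, and since $W \equiv 0$ on $K^\circ$ we get $Pu = f$ there. The wavefront bound \eqref{WFrelation} and the microlocal regularity at $r=0$ follow from elliptic regularity off $\Sigma$, the standard positive-commutator propagation of singularities for the absorbed operator $P_W$ (which, $W$ being nonnegative of the appropriate sign, propagates singularities only in the forward direction; cf.\ \cite{DyZw:19}), and Corollary~\ref{cor:quantitative}: a singularity of $u$ over $K^\circ \cap \{r>0\}$, traced backward, must reach $\WFb f$, or reach $r=0$ along an outgoing ray (landing in $\fcal_O$), or reach the region where $W$ is elliptic (where the absorption precludes it), so $\WFb u \subset \WFb f \cup \fcal_O \cup \Phi_+(\WFb f \cap \Sigma)$. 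For uniqueness, if $u_1, u_2$ solve $Pu=f$ on $K^\circ$ and satisfy \eqref{WFrelation}, then $w = u_1 - u_2$ solves $Pw = 0$ on $K^\circ$ with $\WFb w \subset \fcal_O \cup \Phi_+(\WFb f \cap \Sigma)$; a point of $\WFb w$ on $\Phi_+(\WFb f \cap \Sigma) \setminus \fcal_O$, propagated backward (legitimately, the equation being homogeneous) past $\supp f \subset K_0 \subset K^\circ$, would force $\WFb w$ to contain a point of $\{r>0\}$ lying in neither $\fcal_O$ nor $\Phi_+(\WFb f \cap \Sigma)$, or an incoming point of $\fcal_I$---a contradiction; hence $\WFb w \subset \fcal_O$, and then $\WFbh w \subset \fcal_O$ by Lemma~\ref{lemma:WFinclusions} and Theorem~\ref{theorem:reflection}.

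Finally, under Hypothesis~\ref{hyp:analytic} or \ref{hyp:tphi} I claim $N = 0$. A nonzero $v \in \mathcal{N}$ is supported in $K'$ and satisfies $P^* v = 0$ on $(K')^\circ$ (where $W$ is absent and $v$ vanishes), so it suffices to prove unique continuation for $P^*$ from the complement of $K'$. If $\pert$ (hence $\pert^*$) has analytic coefficients, Holmgren's theorem propagates the vanishing of $v$ inward across any non-characteristic hypersurface; since the spheres $\{r = \mathrm{const}\}$ are non-characteristic for $\Box_g$ and the operator is strictly hyperbolic in $\{r > \abs{\C}\}$, one pushes the vanishing inward, the remaining unique-continuation statement near $r=0$ and across $\{r = \abs{\C}\}$ following from the arguments of \cite{MoWu:21} (their Lemma~6 and the accompanying angular-mode analysis). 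Under Hypothesis~\ref{hyp:tphi}, $P^*$ commutes with $\pa_t$ and $\pa_\varphi$, so one decomposes $v$ into joint $(t,\varphi)$-Fourier modes, reducing to the Tricomi-type mode equation of \cite{MoWu:21}, and concludes $v \equiv 0$ mode by mode. In either case $\mathcal{N} = 0$, so the construction applies to every $f \in \dot H_b^m(K_0)$.
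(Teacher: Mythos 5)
Your overall architecture---Duistermaat--H\"ormander duality with a complex absorbing potential, fed by Corollary~\ref{cor:quantitative} at the string---is the same as the paper's, but two steps as written would fail. The more serious one is your compactness claim: the embedding $\dot H_b^{-m}(K')\hookrightarrow \dot H_b^{-N_0}(K')$ is \emph{not} compact. Gaining b-derivatives alone gives no compactness at $r=0$; one must also gain a power of the weight $r$ (cf.\ \cite[Lemma 6.6]{MeMe:83}). Without compactness, neither the finite-dimensionality of the nullspace nor the removal of the error term $\|v\|_{\dot H_b^{-N_0}}$ from the a priori estimate goes through, and the whole Fredholm/duality argument collapses. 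This is precisely the obstacle the paper spends the second half of Section~\ref{sec:causal} overcoming: it proves a weighted estimate (Lemma~\ref{lemma:weighted}) trading two b-derivatives for a factor of $r^2$ via the Mellin transform and a Poincar\'e-type lower bound for $\fib^2$ on the slab (Lemma~\ref{lemma:diophantine}), replaces the target space by the intersection $\hilbY=\dot H_b^{s+1,0}(K')\cap\dot H_b^{s-2,2}(K')$, and only then obtains a compact embedding into $\dot H_b^{-N,1}(K')$ (Lemma~\ref{lemma:compactembedding}). Your proposal contains no substitute for this step.

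The second gap is the choice of absorbing potential. A nonnegative scalar $W\in\mathcal{C}^\infty_c$ makes $P-iW$ absorb singularities travelling \emph{backward along the Hamilton flow} on \emph{both} components of the characteristic set; but forward-in-flow is forward-in-$t$ only where $\taub>0$, and is backward-in-$t$ where $\taub<0$. Your $W$ would therefore enforce the wrong causality on the $\taub<0$ component, ruining both the a priori estimate (backward bicharacteristics with $\taub<0$ escape in the direction where your $W$ gives no control) and the forward wavefront relation \eqref{WFrelation}. The paper instead takes $W\in\Psibf^2(X)$ with $\sgn\sigma_b^2(W)=\sgn\taub$ on the incoming region $\IC$, elliptic there and supported in $\{r>R\}$, together with Lemma~\ref{lemma:bichars} guaranteeing every relevant backward bicharacteristic enters $\IC$ while staying in $K'$; you should adopt that construction. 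Finally, your treatment of the triviality of $N$ under Hypothesis~\ref{hyp:analytic} is too quick: Holmgren requires an explicit continuous family of noncharacteristic hypersurfaces sweeping out the region (the paper uses two such families, one for $r<\abs{\C}$ and one for $\abs{\C}\le r\le R$, with slope bounds keeping them noncharacteristic), and the angular-mode reduction you invoke is only available under Hypothesis~\ref{hyp:tphi}.
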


\begin{remark}\mbox{}\begin{enumerate}\item
Here in the existence theorem we have dropped the refinement of Sobolev spaces based on $\Hone$
in favor of the simpler $H_b^m$ spaces.  (We do still need the refined
notion of wavefront set in employing our propagation statements.)  We
will use results obtained above in the setting of the $\Hb^m$ scale
of spaces, and remind the reader (cf.\ Remark~\ref{remark:Sob}) that
the distinction between $H_b^m$ and $\Hb^m$ is moot over compact sets.

\item
More precise statements than the basic ``solvability in $H_b^{m+1}$ for
data in $H_b^{m}$'' in fact hold, using $\Hone$-based spaces.  So, for instance, the method used
here shows that
for every
$f \in \HbHonestar^{-1}$ with  $\supp f
\subset \{r<R_0\},$ there exists $u \in (\HbHonestar^2\cap L^2)^*$
solving the equation as above.  Such estimates have the virtue of treating radial and
fiber-derivatives differently from arbitrary b-derivatives.
Likewise, we note that we can obtain the more precise statement that the set of
obstructions $N$ is contained in $\HbHone^\infty\cap \E'(X).$

\item
Even in the argument as given below in terms of b-Sobolev spaces, the
reader will note that our hypotheses on $f$ are in fact that it lie
the dual space of an \emph{intersection} of Sobolev spaces. This would enable us
to give alternate hypotheses on $f$ involving higher regularity
but a more singular weight at $r=0.$

\item
A more precise version of the wavefront relation follows from our
elliptic regularity statements and interior propagation of singularities:
\begin{equation}\label{betterWFrelation}
\WFbh u\backslash \WFbhstar f \subset \fcal_O\cup \Phi_+ (\WFbhstar f\cap \Sigma).
\end{equation}

\item 
The assumption that $f$ is a compactly supported distribution is needed to obtain the uniqueness result but not the existence result, which merely requires $f$ be an extendible distribution defined on $K_0$.
\end{enumerate}
\end{remark}
We now prove Theorem~\ref{theorem:forward}.
Assume without loss of generality (expanding and time-translating $K$
if necessary) that
$$
K = \{r\leq R_0,\ t \in [0, T]\},
$$
and moreover that $R_0>\abs{\C}.$  Note that this means that for $r>R_0,$
under the bicharacteristic flow (in the b-cotangent variables)
$$
\dot t = r^2\taub - \C (\C\taub +\etab)
$$
has the same sign as $\taub$ on the characteristic set, hence $t$ is
monotone along the null bicharacteristic flow in this region.

Let $\proj: \Tbstar X \to X$ be the projection map from the b-cotangent bundle onto the base space.

\begin{lemma}\label{lemma:bichars}
  There exists $R> R_0$ as above and $T' \gg 0$ so that for each null
  bicharacteristic $\gamma(s)$ with $\proj(\gamma(0)) \in K$ and $\gamma(0) \notin \fcal_O,$ there exists
  $s_0$ with
  \begin{enumerate}
  \item \label{flow1} $R+1<r(\gamma(s_0))<2R$
  \item \label{flow2} $-T'<t(\gamma(s_0))<t(\gamma(0))$
  \item \label{flow3} $\displaystyle\frac{\xib}{r \taub}\rvert_{\gamma(s_0)} > 3/4$
\item \label{flow4} For $s$ between $0$ and $s_0$,
  $\displaystyle\proj(\gamma(s))\in \{\abs{t}<T',\  r<2R\}$.
    \end{enumerate}
  \end{lemma}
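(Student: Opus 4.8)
The plan is to work in the flat coordinates of Section~\ref{sectionBichars}, in which, away from $r=0$, the spatial projection of $\gamma$ is a straight line traversed at unit speed and $t'=t-\C\varphi$ is a global affine parameter along $\gamma$, and then to follow $\gamma$ backwards in the $t'$-direction until $r$ has grown to a prescribed large value. First I would orient $\gamma$ so that $t$ (equivalently $t'$) is asymptotically increasing at both ends, which is possible by Section~\ref{sectionBichars}, and observe that it suffices to treat $\gamma$ lying over $X^\circ$: the points of $\Sigma$ over $r=0$ carry only stationary bicharacteristics (the rescaled Hamilton field \eqref{bHam} vanishes on $\Sigma|_{\{r=0\}}$), so they are not the concern here, and we may assume $r(\gamma(0))>0$. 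Writing the spatial projection of $\gamma$ as $\ell(t')=\ell_0+(t'-t'_0)\vec v$ with $|\vec v|=1$ and $t'_0:=t'(\gamma(0))$ (using the fact, noted in Section~\ref{sectionBichars}, that $\gamma$ stays in the coordinate patch), I would record the following bounds, all uniform over bicharacteristics with $\proj(\gamma(0))\in K=\{r\le R_0,\ t\in[0,T]\}$: the distance $d_0$ from the origin to the line $\ell$ satisfies $d_0\le r(\gamma(0))\le R_0$; writing $t'_*$ for the value of $t'$ at closest approach, so that $r(\gamma(t'))^2=d_0^2+(t'-t'_*)^2$, one has $|t'_*-t'_0|=(r(\gamma(0))^2-d_0^2)^{1/2}\le R_0$; $|t'_0|\le T+2\pi|\C|$; and, since $\varphi$ varies by at most $\pi$ along all of $\gamma$, the identity $t(\gamma(s))-t'(\gamma(s))=\C\varphi(\gamma(s))$ gives $|t(\gamma(s))-t'(\gamma(s))-\C\varphi(\gamma(0))|\le|\C|\pi$ for every $s$. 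Finally, $t'\mapsto r(\gamma(t'))$ is strictly convex with its minimum at $t'_*$.

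Since $\gamma(0)\notin\fcal_O$, the curve $\gamma$ is not outgoing, so $r(\gamma(t'))\to\infty$ as $t'\to-\infty$. Because $t'_*\le t'_0+R_0$, the function $r(\gamma(\cdot))$ is strictly decreasing in $t'$, hence strictly increasing as $t'$ decreases, on the entire ray $t'\le t'_0-R_0$; and its value at $t'=t'_0-\max(R_0,|\C|\pi)$ is uniformly bounded, by $(R_0^2+(2R_0+|\C|\pi)^2)^{1/2}$. I would therefore choose $R$ large — depending only on $R_0$ and $\C$, and to be further constrained below — large enough that $\tfrac32 R$ exceeds this bound; then there is a unique $t'_{s_0}<t'_0-\max(R_0,|\C|\pi)$ with $r(\gamma(t'_{s_0}))=\tfrac32 R$, and I would set $s_0:=t'_{s_0}-t'_0$, parametrizing $\gamma$ by $s=t'-t'_0$, so that ``$s$ between $0$ and $s_0$'' is exactly the $t'$-interval $[t'_{s_0},t'_0]$. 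Note $t'_{s_0}<t'_0-R_0\le t'_*$.

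Verification of the four conclusions is then routine. Item~\ref{flow1}: $R+1<\tfrac32 R<2R$ for $R>2$. Item~\ref{flow4}: on $[t'_{s_0},t'_0]$ convexity gives $r(\gamma(\cdot))\le\max(\tfrac32 R,r(\gamma(0)))=\tfrac32 R<2R$; and $|t'(\gamma(s))|\le|t'(\gamma(s))-t'_*|+|t'_*-t'_0|+|t'_0|\le\tfrac32 R+R_0+T+2\pi|\C|$, whence $|t(\gamma(s))|\le\tfrac32 R+R_0+T+5\pi|\C|$, so item~\ref{flow4} holds once $T':=\tfrac32 R+R_0+T+5\pi|\C|+1$; this also forces $-T'<t(\gamma(s_0))$, the lower half of item~\ref{flow2}. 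Item~\ref{flow2}, upper half: $t'_{s_0}-t'_0<-|\C|\pi$ and $|\C(\varphi(\gamma(s_0))-\varphi(\gamma(0)))|\le|\C|\pi$ give $t(\gamma(s_0))-t(\gamma(0))=(t'_{s_0}-t'_0)+\C(\varphi(\gamma(s_0))-\varphi(\gamma(0)))<0$. Item~\ref{flow3}: at $s_0$ we have $r=\tfrac32 R$ and (since $t'_{s_0}<t'_*$) $dr/dt'=(t'_{s_0}-t'_*)/r<0$ with $|dr/dt'|=(1-d_0^2/r^2)^{1/2}\ge(1-\tfrac{4R_0^2}{9R^2})^{1/2}$, while $dt/dt'=1+\C\,d\varphi/dt'$ with $|\C\,d\varphi/dt'|=|\C|d_0/r^2\le\tfrac{4|\C|R_0}{9R^2}$; hence the parametrization-independent slope $dr/dt=(dr/dt')/(dt/dt')$ is negative with $|dr/dt|\ge(1-\tfrac{4R_0^2}{9R^2})^{1/2}/(1+\tfrac{4|\C|R_0}{9R^2})\to1$ as $R\to\infty$. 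A computation with \eqref{bHam} gives the identity $\xib/(r\taub)=-(dr/dt)\bigl(1-\C(\C\taub+\etab)/(r^2\taub)\bigr)$ on the characteristic set, and there $|\C(\C\taub+\etab)/(r^2\taub)|\le|\C|/r=\tfrac{2|\C|}{3R}$; so $\xib/(r\taub)|_{\gamma(s_0)}=-(dr/dt)|_{\gamma(s_0)}\bigl(1+O(|\C|/R)\bigr)\to1$, and in particular exceeds $3/4$ once $R$ is taken large enough depending only on $R_0$ and $\C$. Since all estimates are uniform in $\gamma$, the single choice of $R$ (then $T'$) works for every such bicharacteristic through $K$.

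The main obstacle is the non-monotonicity of $t$ along bicharacteristics that pass close to the string: one cannot use $t$ itself as a parameter and must carry the $|\C|\pi$ variation bound relating $t$ and $t'$ through all of the estimates. A secondary point needing care is that a bicharacteristic reaching $r=0$ is only ``half'' of a Minkowski geodesic (it terminates at the string in the future); one checks that the incoming such curves nonetheless escape to $r=\infty$ in the past — they arrive from $r=\infty$ — and that the uniform bounds $d_0\le R_0$ and $|t'_*-t'_0|\le R_0$ remain valid in that case.
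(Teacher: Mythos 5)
Your proof is correct and follows essentially the same route as the paper's: pass to the flat coordinates $(t'=t-\C\varphi,x)$, flow backward along the Minkowski line until $r$ is comparable to $R$, use the $\abs{\C}\pi$ bound on the $t$ versus $t'$ discrepancy to get the time estimates, and convert the geometric slope $dr/dt$ into $\xib/(r\taub)$ via the rescaled Hamilton vector field \eqref{bHam}. The only differences are cosmetic — you select $s_0$ by the value of $r$ (via the impact-parameter formula $r^2=d_0^2+(t'-t'_*)^2$) rather than by arc length, and your bound $\abs{(\C\taub+\etab)/\taub}\le r$ on the characteristic set is a slightly cleaner way to control the correction factor in item \eqref{flow3}.
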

  Thus any bicharacteristic from $K$ not emerging from the string comes from the
  region $r>R+1$ at backward time, and at that time has a large radial
  component.  It is, moreover,
  oriented in an incoming
  direction (property \eqref{flow3}), as it must reach $K,$ where $r<R_0$, as time increases.
  This must occur within some fixed amount of elapsed backwards time
  $T'$ (which of course depends on the size of $K$) and within distance
  $2R$ of the string.

  We also note for later use the simpler fact that sufficiently far back in time, the
  geodesic lies in $\{r>2R\}$.

  \begin{proof}
To begin, we employ the Minkowski coordinates $(t'=t-\C \varphi,x)$
of Section~\ref{sectionBichars} near any desired bicharacteristic.  Recall that the null geodesic
flow is then (up to reparametrization) the lift of the Minkowski geodesic flow
$$
x=x_0+sv,\ t'=t'_0\pm s,
$$
where $\abs{v}=1$.  We use the notation $t(s),$ $x(s)$ instead of
$t(\gamma(s)),$ $x(\gamma(s))$, etc.

Choose $R$ sufficiently large so that
\begin{equation}\label{R}
\max\{R_0+R+1, 2 \abs{\C} \pi\}< 2R-R_0.
\end{equation}
If we choose $s_0$ with  \begin{equation}\label{s} -2R+R_0<s_0<-\max\{R_0+R+1, 2 \abs{\C} \pi\},\end{equation} then provided $\abs{x_0}<R_0$,
\begin{equation}\label{requation}
r(s_0)=\abs{x(s)} \geq \abs{s_0} -\abs{x_0} >(R_0+R+1)-R_0=R+1.
\end{equation}
Likewise,
$$
r(s_0) \leq \abs{s_0}+\abs{x_0} <(2R-R_0)+R_0,
$$
establishing \eqref{flow1}.

Meanwhile, it is always the case that
$$
t\in [t'-\abs{\C} \pi, t'+\abs{\C} \pi],
$$
hence, since the flow in $t'$ is likewise very simple, subject to
\eqref{s} we also certainly have
$$
t(s_0)\leq t'(s_0)+\abs{\C} \pi< (t'_0-2 \abs{\C} \pi)+\abs{\C} \pi\leq t(0).
$$
On the other hand,
$$
t(s_0)\geq  t'(s_0)-\abs{\C} \pi > (t'_0-(2R-R_0))-\abs{\C} \pi=-T'
$$
where
$$
T' \equiv 2R-R_0+\abs{\C} \pi.
$$
this establishes \eqref{flow2}.  Note that all the estimates above
required was for $R$ to satisfy the inequality \eqref{R}, hence we may freely increase $R$
in what follows.  Note also that increasing $R$ increases $T'$.

Finally, we turn to \eqref{flow3}, which requires examination of the
cotangent variables. Assume $s_0$ satisfies \eqref{s}.  Then
$$
\begin{aligned}
  -\dot{r}&=-\frac{\ang{x_0 +s_0 v,v}}{\abs{x_0+s_0v}}\\
  &=-\frac{\ang{x_0, v}+s_0}{\abs{x_0+s_0v}}\\
  &\geq \frac{\abs{s_0}-R_0}{\abs{s_0}+R_0}\\
  &> \frac{1-R_0/(R_0+R+1)}{1+R_0/(R_0+R+1)};
\end{aligned}
$$
increasing $R$ as needed, we can thus ensure that for
$s_0$ satisfying \eqref{s},
$$
\dot r \leq-0.9.
$$
Likewise, since $r(s_0)>R+1$ and $|\dot{r}|\le 1$ since $x(s)$ is parametrized at unit speed, 
$$
\begin{aligned}
  \big \lvert \frac d{ds} \hat x\big \rvert &= \big \lvert\frac d{ds}
  \frac{x}{\abs{x}} \big \rvert\\
  &= \big \lvert-\frac{\dot r}{r^2} x +\frac{v}{r}\big \rvert\\
  &\leq 2(R+1)^{-1}\\
  &\leq 0.01/\abs{\C}
\end{aligned}
$$
provided $R$ is sufficiently large.  Hence we certainly have
$\abs{\dot \varphi}<0.1/\abs{\C}$ for such $s_0$, and, finally, since $\dot t'=1$,
$$
\dot t= \dot t'+ \C \dot \varphi<1.1
$$
at these points.
Thus
\begin{equation}\label{drdt}
\frac{dr}{dt} =\frac{\dot r}{\dot t}< -\frac{0.9}{1.1}<-0.8.
\end{equation}

Finally, note from \eqref{bHam} that under the
rescaled Hamilton flow employed there (since the change of scaling factor cancels out in numerator and denominator)  
$$
\begin{aligned}
  \frac{dr}{dt} &= \frac{-\xi r}{r^2 \tau-\C (\C \tau + \eta)}\\
  &= -\frac{\xi}{r\tau(1-r^{-2}\C (\C+\hat \eta))}
\end{aligned}
$$
On the characteristic set over $K$, $\abs{\C+\hat  \eta}<R_0,$ so certainly $\abs{\C (\C+\hat \eta)}$ is bounded by
some number $\digamma$ on the characteristic set over $K$.  Thus by \eqref{drdt}
$$
\frac{\xi}{r\tau(1-r^{-2}\C (\C+\hat \eta))}=-\frac{dr}{dt} > 0.8
$$
hence when $r>R+1$,
$$
\frac{\xi}{r\tau}>0.8 (1-r^{-2}\C (\C+\hat \eta))>0.8(1-(R+1)^{-2}\digamma)
$$
and taking $R$ large enough ensures that \eqref{flow3} holds.

We now turn to condition \eqref{flow4}.  To clarify the exposition,
assume without loss of generality that $s_0<0$---the reverse case
merely involves overall sign changes.  Since $\ddot{r}>0$ along the
null bicharacteristic flow, $r$ must achieve its maximum on the
interval $[s_0,0]$ at $s_0$, hence cannot have exceeded
$2R$.

As for the $t$ variable, as noted in
Section~\ref{sectionBichars} it is monotone increasing along the flow
when $r>\abs{\C}$, and it cannot ever exceed $t(0)+2\abs{\C}\pi$ for
$s\leq 0$.  Its maximum
along the flow from $K$ with parameter $s\in [s_0, 0]$ can thus be no larger than $T+2\abs{\C}\pi$; we increase 
$R$ as needed to ensure that $T'=2R-R_0 +
\abs{A}\pi$ is larger than this quantity.

Now note that $r$ must exceed $R_0>\abs{\C}$ whenever $s<-2R_0$
  (cf.\ \eqref{requation} above).  Since
  $$t'(s)=t'(0)+s = t(0)-\C \varphi(0)+s,$$ on the interval
  $s\in [-2R_0, 0]$, $t' \geq -2R_0-\abs{\C}\pi$ hence
  $t \geq -2R_0-2\abs{\C} \pi$.  Thus, if $t$ achieves its minimum on
  $[-s_0, 0]$ for $s\geq -2R_0,$ that minimum must be at least
  $-(2R_0+2\abs{\C}\pi)$.  Taking $R$ sufficiently large, we ensure
  that $T'>2R_0+2\abs{\C}\pi$, hence if the minimum of $t$ on $[s_0,
  0]$ arises for
  $s\in [-2R_0, 0]$, we certainly have $t>-T'$ on $[s_0, 0]$.  The
  same holds if the minimum is not achieved on $[-2R_0, 0]$, since
  then it must arise at $s=s_0$ by monotonicity of $t$ when
  $r>\abs{\C}$.  Thus the $t$ variable stays within $(-T',T')$ on
  $[-s_0,0]$ as desired.
\end{proof}

  
  Recall that $K \subset \{t \in [0, T]\}$.
  Let $K'$ denote the compact set
\begin{equation}\label{Kprime}
K'=\{r\leq 2R,\ t \in [-T', T']\}\subset X.
\end{equation}
  
Now we construct a complex absorbing potential $W \in \Psibf^2(X),$ supported away
from $r=0,$ such that
\begin{enumerate}
\item $W$ is elliptic on the set
  $$
\IC \equiv \{r>R+1\} \cap \left\{\frac{\xib}{r
  \taub}>3/4 \right\}.
  $$
\item $\sgn \sigmab{2}(W) = \sgn \xib=\sgn\taub$ on $\IC.$
\item The Schwartz kernel of $W$ is supported on $\{r>R\}^2.$
  \item $[\pa_t, W]=[\pa_\varphi, W]=0.$
\end{enumerate}
Note in particular that the elliptic set of $W$ contains the incoming
points $\fcal_I$ sufficiently far from the string, since on this set $(A\taub+\etab)=0$ and $\xib/(r\taub) = 1$.
We now let
$$
\tP\equiv P-iW
$$
be the wave operator modified by the complex absorbing potential.

We now state a proposition in which we bring our propagation of
singularities results to bear in a global fashion.
In what follows, we use the notation $\tP^{(*)}$ to denote an operator
that may be taken to be \emph{either} $\tP$ \emph{or} $\tP^*.$
  \begin{proposition}\label{prop:Fredholm1}
For any $s \in \RR,$ $N \in \NN,$ there exists $C$ such that for all $\phi \in \dot\Hone(K')$,
\begin{equation}\label{lowerbound}
\norm{\phi}_{\HbHone^s} \leq C \norm{\phi}_{\HbHone^{-N}}+ C \norm{\tP^{(*)} \phi}_{\HbHonestar^{s+1}}.
\end{equation}
\end{proposition}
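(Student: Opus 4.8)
The plan is to assemble \eqref{lowerbound} from the machinery already in place: microlocal elliptic regularity (Proposition~\ref{prop:ellipticreg}), the reflection theorem through $r=0$ (Theorem~\ref{theorem:reflection}, in the quantitative form of Corollary~\ref{cor:quantitative}), and standard real-principal-type propagation of singularities for $\tP^{(*)}$ over $X^\circ$ in the presence of a complex absorbing potential (cf.\ \cite{DyZw:19}, \cite{Va:04}), glued together using the bicharacteristic geometry of Lemma~\ref{lemma:bichars} and Remark~\ref{remark:flowout}. We work throughout with the $\Hone$-based b-wavefront sets and Sobolev spaces. It suffices to show that $\phi\in\dot\Hone(K')$ together with $\WFbhstar^{s+1}(\tP^{(*)}\phi)=\emptyset$ forces $\WFbh^s(\phi)=\emptyset$: the quantitative bound then follows by the closed graph theorem (or by tracking constants through the commutator estimates, exactly as for Corollary~\ref{cor:quantitative}), and the wavefront statement itself is obtained by the usual bootstrap upgrading the a priori regularity $\phi\in\dot\Hone(K')\subset\HbHone^0$ in half-integer steps.

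First I would clear the non-characteristic directions. Since $\sigmab{2}(\tP^{(*)})=p\mp i\sigmab{2}(W)$, the operator $\tP^{(*)}$ is elliptic wherever $p\neq0$ and, on $\Sigma$, wherever $\sigmab{2}(W)\neq0$ — in particular on a conic neighborhood of $\IC$. Microlocal elliptic regularity — Proposition~\ref{prop:ellipticreg} near $\pa X$, where $\tP^{(*)}=P$ because $W$ is supported in $\{r>R\}$ with $R>\abs{\C}$, together with the ordinary microlocal elliptic estimate for $\tP^{(*)}$ over $X^\circ$ — then confines $\WFbh^s(\phi)$ to $\Sigma$, places it inside $\Sbdotstar X$ over $\pa X$, and keeps it disjoint from $\liptic(W)\supset\IC$.

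The remaining points of $\Sigma$ over $K'$ fall into two families. Either (i) the point lies on $\fcal_O$, i.e.\ its backward bicharacteristic reaches $r=0$; or (ii) it does not, and then by Lemma~\ref{lemma:bichars} (applied with $K$ enlarged to $K'$, increasing $R,T'$ as needed) its backward flow enters $\IC$ while staying over $K'$. In case (ii), $\tP^{(*)}$ is elliptic at the $\IC$-point, so $\phi$ is controlled there a priori, and forward real-principal-type propagation of singularities for $\tP^{(*)}$ carries this control along the bicharacteristic to the point in question; the sign of $\sigmab{2}(W)$ on $\IC$ is chosen precisely so that this forward propagation is legitimate for both $\tP$ and $\tP^*$. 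The same argument applied along the incoming rays $\fcal_I$ — whose backward flow has $r\to\infty$ with $\xib/(r\taub)\to 1$, hence enters $\IC$ — gives $\WFbh^s(\phi)\cap\fcal_I=\emptyset$, so that for $\crap\in\Psibf^0(X)$ elliptic on $\fcal_I\cap\proj^{-1}(\{r\in[r_0,r_1]\})$ with $[r_0,r_1]\subset(0,R)$, $\norm{\crap\phi}_{\HbHone^s}$ is bounded by the right side of \eqref{lowerbound}. Feeding this into the $\tP^{(*)}$-analog of Corollary~\ref{cor:quantitative} — legitimate since $\tP^{(*)}=P$ on $\{r<R\}$, where the entire content of that corollary (the reflection theorem together with interior propagation over $[r_0,r_1]$) lives — controls $\phi$ near the compressed cotangent bundle $\Tbdotstar_{\pa X}X$, that is, at the base of every $\fcal_O$; a final application of interior propagation along the outgoing rays (where $W$ vanishes) then disposes of case (i). Hence $\WFbh^s(\phi)=\emptyset$, with the asserted bound.

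The main obstacle is the direction bookkeeping in the propagation step: one must verify that \emph{every} bicharacteristic over $K'$ not meeting the string — including the incoming rays that supply the hypothesis of the reflection theorem — reaches, under backward flow and without leaving $K'$, the a priori controlled region $\liptic(\tP^{(*)})\supset\IC$, and that the propagation back toward $K$ is never obstructed by $W$. The first is precisely Lemma~\ref{lemma:bichars} together with Remark~\ref{remark:flowout}; the second holds because $\sigmab{2}(W)\neq0$ and $p=0$ force $\sigmab{2}(\tP^{(*)})\neq0$, so the chosen sign of $\sigmab{2}(W)$ makes the \emph{same} forward propagation available for both $\tP$ and $\tP^*$ — the asymmetry between them (forward- versus backward-solvability) surfacing only downstream, in which of the resulting estimates yields an existence theorem. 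Finally, compact support of $\phi$ in $K'$ is essential, as it is what makes Lemma~\ref{lemma:bichars} applicable, and the term $\norm{\phi}_{\HbHone^{-N}}$ is present precisely to furnish the a priori regularity needed to initiate the positive commutator arguments.
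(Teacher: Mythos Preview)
Your overall architecture---elliptic regularity off $\Sigma\cup\Sbdotstar X$, interior propagation with complex absorption for points not on $\fcal_O$, then Corollary~\ref{cor:quantitative} at the boundary, and a final interior propagation along $\fcal_O$---matches the paper's proof. There is, however, a genuine error in your treatment of $\tP^*$.

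You assert that ``the chosen sign of $\sigmab{2}(W)$ makes the \emph{same} forward propagation available for both $\tP$ and $\tP^*$,'' with the asymmetry surfacing only downstream. This is false: passing to the adjoint flips the sign of the imaginary part of the principal symbol, so for $\tP^*$ regularity propagates \emph{backward} in asymptotic $t$ through the absorbing region, not forward. Consequently your argument for case~(ii)---elliptic control on $\IC$ at a backward point, then forward propagation to $q$---works for $\tP$ but fails for $\tP^*$, since you cannot propagate in the forward direction. Moreover, even if you try to run the mirror argument for $\tP^*$ (forward flow, then backward propagation), the forward bicharacteristic enters the \emph{outgoing} region $\{\xib/(r\taub)<-3/4\}$ at large $r$, where $W$ is not required to be elliptic, so your ellipticity source is unavailable.

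The paper avoids this by using the support condition $\phi\in\dot\Hone(K')$ directly rather than ellipticity on $\IC$: since every null bicharacteristic not lying in $\fcal_O$ (resp.\ $\fcal_I$) escapes to $r\to\infty$ and hence leaves $K'$, one obtains a backward point outside $K'$ where $\phi$ vanishes, then propagates forward for $\tP$; for $\tP^*$ one takes a \emph{forward} point outside $K'$ and propagates backward, with $\fcal_I$ and $\fcal_O$ exchanging roles throughout. This also sidesteps your re-application of Lemma~\ref{lemma:bichars} with $K$ enlarged to $K'$, which is circular as stated (enlarging $K$ changes $R,T'$ and hence $K'$ and $W$): the paper needs only the cruder fact that geodesics escape any compact set.
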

Recall that the conventions on the Sobolev spaces are such that, away
from $r=0,$ the norm on the LHS is $H^{s+1}$ and the norm of the
$\tP^{(*)}$ term on the RHS is $H^s.$
    \begin{proof}
     
We prove the result for $\tP,$ with the result for $\tP^*$ being
analogous, with reversed signs (and reversal in the direction of
propagation of singularities on each bicharacteristic).

We claim that for any $q \in \Tbstar X$ with $\proj(q) \in K'$ there
exists $A_q \in \Psibf^0(X)$ elliptic at the fiber through $q$ such that
	\begin{equation}\label{localest} 
		\norm{A_q \phi}_{\HbHone^s} \leq  C \norm{\phi}_{\HbHone^{-N}}+ C
	\norm{\tP \phi}_{\HbHonestar^{s+1}}. 
	\end{equation}
Adding up these regularity estimates, using compactness of the cosphere bundle $\Sbstar K'_\fib$, yields the desired global estimate over $K'$. 

Thus it is left to prove the claim. To this end, let $q \in \Tbstar X$ with $\proj(q) \in K'$ be given.  

If
$\proj(q) \in \pa X$ and $q \notin \Tbdotstar X$, then by our elliptic
regularity estimate, Proposition~\ref{prop:ellipticreg}, there exists
$A_q\in \Psibf^0(X)$ elliptic at $q$ such that
$$
\norm{A_q \phi }_{\HbHone^s} \leq C \norm{\phi}_{\HbHone^{-N}} + C
\norm{\tP \phi}_{\HbHonestar^s},
$$
which being an \emph{elliptic} estimate, is a stronger estimate than needed on $P u$, and indeed implies \eqref{localest}.
Likewise, if $\proj (q) \in X^\circ$ and $q \notin \Sigma$, an
estimate of the same form applies by standard microlocal elliptic
regularity.

If by contrast, either $\proj(q) \in X^\circ$ and $x \in \Sigma$ or else
$q \in \Tbdotstar_{\pa X} X$, then we will apply propagation
estimates.  First consider the case where $q \notin \fcal_O$ (so
$\proj (q)
\in X^\circ$).  Let us suppose for the moment that $\tau(q)>0$.  Recall that $\tau$ is conserved along
the null bicharacteristic flow and asymptotically we have
$\sgn \dot{t} = \sgn \tau >0$ so that the null bicharacteristic
satisfying $\gamma(0)=q$ is asymptotically forward-oriented in
time. Furthermore, since geodesics not in $\fcal_O$ escape to infinity
as $t \to -\infty$, there is a point
$\gamma(s)$ \emph{backwards} along the flow (relative to the curve parameter)
where $\gamma(s)\notin \Tbstar K'$.
 Due to our
choice of sign for the principal symbol of $W$, regularity propagates
\textit{forward} along the flow (and singularities propagate backward)
by the results of \cite[Section 2.5]{Va:13}. Thus for some
$A_q \in \Psibf^0$ elliptic near $q$, we have
\begin{equation}\label{local-est}
	\norm{A_q\phi}_{\HbHone^s} \leq C \norm{\phi}_{\HbHone^s((K')^c)}+C \norm{\phi}_{\HbHone^{-N}}+ C \norm{\tP \phi}_{\HbHonestar^{s+1}},
\end{equation} 
 (cf.\ \cite[Equation 2.18]{Va:13}), and the first term on the right
 hand side vanishes since $\supp \phi \subset K'$, hence equation \eqref{localest} holds.
 The same argument
applies, mutatis mutandis, under forward flow if $\tau<0$: the
geodesic still escapes to infinity as $t\to-\infty$ but now this is
forward along the flow, hence the same propagation estimates hold by
our choice of the sign of $\sigma_b^2(W)$, which has been arranged so
propagation of regularity is forward in $t$, no matter which sign of $\tau$ is chosen.

Now we turn to the case $q \in \Tbdotstar_{\pa X} X$.
Corollary~\ref{cor:quantitative} yields, for an $A_q$ elliptic near $q,$
 \begin{equation}\label{local-est3}
	\norm{A_q\phi}_{\HbHone^s} \leq  C \norm{\phi}_{\HbHone^{-N}}+ C
        \norm{\tP \phi}_{\HbHonestar^{s+1}} + C \norm{\crap \phi}_{\HbHone^s}.
      \end{equation}
     Here $\crap$ is microsupported in a region (away from $\pa X$, and
      close to incoming, hence not outgoing)  in which we have already
      obtained control by the estimates we have just obtained on $(\fcal_O)^c$.  Hence we may estimate
      $$
	\norm{\crap\phi}_{\HbHone^s} \leq  C \norm{\phi}_{\HbHone^{-N}}+ C
        \norm{\tP \phi}_{\HbHonestar^{s+1}},
        $$
        i.e., we may drop this term from the estimate
        \eqref{local-est3} to get \eqref{localest} for $q \in  \Tbdotstar_{\pa X} X$.

Finally, we can treat the case of $q \in \fcal_O\cap X^\circ$. We note
that in the estimate \eqref{localest} at the compressed
cotangent bundle over the boundary, the operator $A_q$ is elliptic on a
\emph{neighborhood} of $\Tbdotstar_{\pa X} X$, and this includes
points arbitrarily close to $\pa X$ along every bicharacteristic in
$\fcal_O$.  Thus by ordinary propagation of singularities, we control the
whole of $\fcal_O \cap \proj^{-1}(K')$ by the same right-hand side:
for any given point $q\in \fcal_O \cap
X^\circ$ there is $A_q$ elliptic at $q$ such that \eqref{localest} holds.
\end{proof}

In what follows we let $\norm{\bullet}_{m,l}$ denote the $H_b^{m,l}$ norm.
\begin{corollary}\label{cor:b-estimates}
For any $s \in \RR,$ $N \in \NN,$ there exists $C$ such that for all $\phi \in \dot\Hone(K')$,
\begin{equation}\label{lowerbound-b}
\norm{\phi}_{{s+1},0} \leq C \norm{\phi}_{{-N,1}}+ C
\norm{\tP^{(*)} \phi}_{s,0}.
\end{equation}
  \end{corollary}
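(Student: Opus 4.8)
The plan is to deduce the corollary directly from Proposition~\ref{prop:Fredholm1} by converting the $\Hone$-based b-Sobolev norms appearing there into the plain weighted b-Sobolev norms $\norm{\cdot}_{s,l}$, using Lemma~\ref{lemma:spaces1}. Every distribution in sight is supported in a region where $r$ is bounded (this is built into $\phi\in\dot{\Hone}(K')$ and into the support properties of $W$), so the facts I will need are the continuous inclusions, valid over such a region and for every $s\in\RR$,
$$
\HbHone^s \hookrightarrow H_b^{s+1}, \qquad H_b^{s}\hookrightarrow\HbHonestar^{s+1}, \qquad H_b^{s+1,1}\hookrightarrow\HbHone^s .
$$
Granting these, the argument is: given $N\in\NN$, apply Proposition~\ref{prop:Fredholm1} with $N$ replaced by $N+1$ and with $\phi\in\dot{\Hone}(K')$; then bound the left-hand side $\norm{\phi}_{\HbHone^s}$ below by $\norm{\phi}_{s+1,0}$ using the first inclusion, bound $\norm{\phi}_{\HbHone^{-N-1}}$ above by $\norm{\phi}_{-N,1}$ using the third inclusion with $s=-N-1$ (so that $H_b^{s+1,1}=H_b^{-N,1}$), and bound $\norm{\tP^{(*)}\phi}_{\HbHonestar^{s+1}}$ above by $\norm{\tP^{(*)}\phi}_{s,0}$ using the second inclusion. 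This is exactly \eqref{lowerbound-b}.

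To establish the three inclusions I would begin from the content of Lemma~\ref{lemma:spaces1}, namely $H_b^{1,1}\subset\Hone\subset H_b^1$ and, dually, $H_b^{-1}\subset\Hmone\subset H_b^{-1,-1}$ over a region with $r$ bounded, and then move these relations up and down the Sobolev scale by composing with elliptic operators from $\Psibf(X)$ and commuting the weight $r$ past them. For the first inclusion: if $\Lambda u\in\Hone$ for an elliptic $\Lambda\in\Psibf^s$, then $\Lambda u\in H_b^1$, and microlocal elliptic regularity gives $u\in H_b^{s+1}$. The second inclusion is the same, now testing with an operator of order $s+1$ (which maps $H_b^s$ to $H_b^{-1}$) and using $H_b^{-1}\subset\Hmone$. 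For the third inclusion with $s\ge0$, write $u=rv$ with $v\in H_b^{s+1}$ and, for $\Lambda\in\Psibf^s$, use Lemma~\ref{lemma:indicial} to write $\Lambda(rv)=r(\Lambda v+Bv)$ with $B\in\Psibf^{s-1}$, whence $\Lambda(rv)\in rH_b^1=H_b^{1,1}\subset\Hone$; the case $s<0$ is equivalent by duality to $\HbHonestar^{-s}\subset H_b^{-s-1,-1}$ (for $-s>0$), proved exactly like the second inclusion but invoking the other half of Lemma~\ref{lemma:spaces1}, $\Hmone\subset H_b^{-1,-1}$. Every commutator and residual contribution arising in these manipulations is of strictly lower order, or of order $-\infty$, hence harmless, and over a region with $r$ bounded the distinctions between supported and extendible distributions and between uniform and ordinary b-Sobolev spaces are immaterial (cf.\ Remark~\ref{remark:Sob}).

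I expect the only point requiring genuine care -- the main, if minor, obstacle -- to be the index bookkeeping in the error term. One cannot simply use $H_b^{-N,1}\hookrightarrow\HbHone^{-N}$: this inclusion \emph{fails}, since $H_b^{-N,1}=rH_b^{-N}$ contains distributions of regularity $-N$ away from $r=0$ whereas $\HbHone^{-N}$ is contained in $H_b^{-N+1}$. This is precisely why Proposition~\ref{prop:Fredholm1} must be applied with its index raised to $N+1$, after which the third inclusion supplies what is actually needed, $H_b^{-N,1}=H_b^{(-N-1)+1,1}\hookrightarrow\HbHone^{-N-1}$; since the error term carries no quantitative weight in the subsequent Fredholm argument, this shift is harmless.
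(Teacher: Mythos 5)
Your proposal is correct and follows the paper's own (one-line) proof: apply Proposition~\ref{prop:Fredholm1} and convert the $\Hone$-based norms to weighted b-Sobolev norms via the inclusions of Lemma~\ref{lemma:spaces1}, shifted along the Sobolev scale by elliptic operators and the commutator identities of Lemma~\ref{lemma:indicial}. Your observation that the error term requires applying the proposition with $N$ replaced by $N+1$ (since $H_b^{-N,1}\not\subset\HbHone^{-N}$) is a genuine bookkeeping point the paper elides, and it is harmless exactly as you say because both statements are quantified over all $N$.
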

  \begin{proof}
This follows directly from Proposition~\ref{prop:Fredholm1}, together
with the inclusions of spaces in Lemma~\ref{lemma:spaces1}.
    \end{proof}

    Equation \eqref{lowerbound-b} is not quite sufficient to use in
    the solvability argument due to the fact that $H_b^{s+1,0}$ does
    not embed compactly into $H_b^{-N,1}$. Below we perform additional
    analysis that ultimately yields an estimate similar to
    \ref{lowerbound-b} but where the function space on the left hand
    side embeds compactly to the space in which the remainder on the
    right hand side is measured.
  
In what follows, we deal with distributions in $\E'(K')$ by
considering the $t$ variables to lie in an interval: if $u
\in \E'(K')$ we may instead, by a mild abuse of notation, view it as 
$$
u \in \E'([-\pi L/2, \pi L/2]_t \times [0, \infty)_r \times S^1_\varphi),
$$
with the underlying space equipped with the same volume form as before.

\begin{lemma}\label{lemma:diophantine}
For all $s>0$ there exists $\ep>0$ such that the operator $\fib$ acting on distributions
in $\dot{H}^s([-\pi L/2, \pi L/2] \times S^1_\varphi)$ satisfies
  $$
\langle \fib^2\phi,\phi \rangle_{H^s} >\ep\norm{\phi}_{H^s}^2.
  $$
  \end{lemma}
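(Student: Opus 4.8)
The plan is to reduce the quadratic-form bound to a weighted Poincar\'e-type inequality for $\fib$ on functions with compact $t$-support, and then to extract a constant that is uniform in the Fourier variable dual to $\varphi$. The one essential structural input is that any distribution in $\dot H^s([-\pi L/2,\pi L/2]\times S^1_\varphi)$ extends by zero to an element of $H^s(\RR\times S^1_\varphi)$ supported in $\{|t|\le\pi L/2\}$ (the extension of zero-extension to Lipschitz domains is in \cite[Theorems 3.29, 3.30]{Mc:00}); consequently its $t$-Fourier transform is band-limited with bandwidth depending only on $L$. We also use throughout that $\C\ne0$ (for $\C=0$ the statement is false, since then $\fib=D_\varphi$ kills $\varphi$-independent functions).

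First I would record the reduction $\langle\fib^2\phi,\phi\rangle_{H^s}=\|\fib\phi\|_{H^s}^2$. Indeed $i\fib=\C\partial_t+\partial_\varphi$ generates the group of translations $(t,\varphi)\mapsto(t+\C\theta,\varphi+\theta)$, which acts by isometries on $H^s(\RR\times S^1_\varphi)$, so $i\fib$ is skew-adjoint there; for $\phi\in\dot H^{s+1}$ (a dense subspace of $\dot H^s$ on which the left-hand side is finite) this gives $\langle\fib^2\phi,\phi\rangle_{H^s}=\|\fib\phi\|_{H^s}^2$. Thus it suffices to prove $\|\fib\phi\|_{H^s}\ge c\|\phi\|_{H^s}$ for $\phi\in\dot H^{s+1}([-\pi L/2,\pi L/2]\times S^1_\varphi)$ with $c=c(s,L,\C)>0$, and by density we may assume $\phi\in C_c^\infty$ with $\operatorname{supp}\phi\subset(-\pi L/2,\pi L/2)\times S^1_\varphi$. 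Expanding $\phi=\sum_{k\in\ZZ}\phi_k(t)e^{ik\varphi}$ and writing $h_k=\widehat{\phi_k}$ for the $t$-transform, Plancherel and the usual characterization of $H^s(\RR\times S^1_\varphi)$ turn this into the family of one-dimensional bounds
\[
\int_{\RR}(\C\tau+k)^2\,\langle(\tau,k)\rangle^{2s}\,|h_k(\tau)|^2\,d\tau\ \ge\ c^2\int_{\RR}\langle(\tau,k)\rangle^{2s}\,|h_k(\tau)|^2\,d\tau ,
\]
with $c$ independent of $k$.

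Set $\lambda_k=-k/\C$, so $(\C\tau+k)^2=\C^2(\tau-\lambda_k)^2$; set $v(\tau)=(1+k^2+\tau^2)^{s/2}$ and $H=vh_k$. Two elementary facts drive the argument. (a) \emph{Weighted Poincar\'e}: if $G\in H^1(\RR)$ satisfies $\|G'\|_{L^2}\le\mathcal B\|G\|_{L^2}$, then integrating $\tfrac{d}{d\tau}\big[(\tau-\mu)|G|^2\big]=|G|^2+2(\tau-\mu)\operatorname{Re}(G'\bar G)$ over $\RR$ and using Cauchy--Schwarz gives $\|G\|_{L^2}\le2\mathcal B\|(\tau-\mu)G\|_{L^2}$, hence $\int(\tau-\mu)^2|G|^2\ge(2\mathcal B)^{-2}\int|G|^2$, \emph{uniformly in $\mu\in\RR$}. (b) Since $1+k^2+\tau^2\ge1$ one has $|v'|\le s\,v$ pointwise, so $\|v'h_k\|_{L^2}\le s\|vh_k\|_{L^2}$. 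Applying (a) with $G=H$, $\mu=\lambda_k$ reduces the lemma to a bound $\|H'\|_{L^2}\le\mathcal B\|H\|_{L^2}$ uniform in $k$, and by (b) and $H'=v'h_k+vh_k'$ this reduces further to $\|vh_k'\|_{L^2}\le\mathcal B'(s,L)\|vh_k\|_{L^2}$, uniform in $k$.

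This last estimate is the heart of the lemma and the step I expect to be the main obstacle: it says that multiplying by the slowly varying weight $v$ does not destroy the ($k$-independent) band-limitedness of $h_k$. I would prove it by an almost-orthogonality argument. Since $\phi_k$ is supported in $\{|t|\le\pi L/2\}$, $h_k=\Psi*h_k$ where $\Psi=\tfrac1{2\pi}\widehat\chi$ for a fixed cutoff $\chi\in C_c^\infty$ that is $1$ on $[-\pi L/2,\pi L/2]$; thus $\Psi$ is Schwartz and depends only on $L$, and $h_k'=\Psi'*h_k$. Partition $\RR_\tau$ into unit intervals $J_m$. On $J_m$ one has $v(\tau)/v(m)\in[e^{-s},e^s]$, while the rapid decay of $\Psi'$ gives, for every $N$, $\|h_k'\|_{L^2(J_m)}^2\lesssim_N\sum_{m'}\langle m-m'\rangle^{-N}\|h_k\|_{L^2(J_{m'})}^2$. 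Summing against $v(m)^2$ and using the elementary inequality $v(m)^2\le2^s\,v(m')^2\,(1+|m-m'|^2)^s$ (which follows from $|m|\le\sqrt{1+k^2+m^2}$), one gets, choosing $N>2s+1$, $\|vh_k'\|_{L^2}^2\lesssim_{s,L}\|vh_k\|_{L^2}^2$ with constant independent of $k$. Combining this with (a) and (b) yields $\mathcal B\le s+C_{s,L}$ uniformly in $k$, hence the desired $c=c(s,L,\C)>0$; the passage from smooth, compactly supported $\phi$ back to general $\phi\in\dot H^s$ is by density.
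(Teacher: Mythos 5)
Your proof is correct, and it takes a genuinely different route from the paper's. The paper argues in physical space: after shifting to $t\in[0,\pi L]$ it computes the diagonal matrix elements of $\fib^2$ in the Dirichlet basis $\sin(kt/L)e^{im\varphi}$, reads off the uniform lower bound $\tfrac{\pi L}{2}\big(\C^2k^2/L^2+m^2\big)\ge\tfrac{\pi\C^2}{2L}$, and then handles $s>0$ in one line by commuting $\fib^2$ through $(\Id+D_t^2+D_\varphi^2)^{s/2}$. You instead work on the Fourier side, where compact $t$-support becomes band-limitedness $h_k=\Psi*h_k$, and the spectral gap becomes the uncertainty-principle inequality $\|(\tau-\mu)G\|_{L^2}\ge(2\mathcal B)^{-1}\|G\|_{L^2}$, crucially uniform in the shift $\mu=-k/\C$; your almost-orthogonality step then shows that multiplication by the weight $(1+k^2+\tau^2)^{s/2}$ worsens the band-limit constant $\mathcal B$ only by an amount depending on $s$ and $L$, uniformly in $k$. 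What each approach buys: the paper's computation is shorter and yields a sharp explicit constant at $s=0$, but its one-line passage to $s>0$ is the delicate point — the operator $(\Id+D_t^2+D_\varphi^2)^{s/2}$ does not preserve support in the strip, and the $s=0$ bound genuinely fails without the support condition, since the symbol $(\C\tau+k)^2$ has no gap on $\RR\times\ZZ$ — whereas your convolution argument supplies exactly the uniformity in $k$ that makes the weighted case work, at the cost of a non-explicit constant. The remaining technical points in your write-up (skew-adjointness of $i\fib$ on $H^s(\RR\times S^1_\varphi)$ via the translation group, density of $\mathcal{C}_c^\infty$ of the open strip in $\dot H^s$ of the closed strip via \cite[Theorems 3.29, 3.30]{Mc:00}, and the observation that $\C\neq 0$ is essential) are handled correctly, or at least rest on the same identifications the paper itself invokes elsewhere.
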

The result is a kind of Poincar\'e inequality, and the case $s=0$ can be proved in
the usual manner by
writing
$$
\abs{\phi(t,\varphi)}^2=\int_{-\pi L/2}^{t/\C} (d/ds) \abs{\phi(\C s, \varphi-t/\C +s)}^2\, ds.
$$
and employing Cauchy--Schwarz.  An even shorter alternative, however, is
as follows.
  \begin{proof}
   For notational convenience, we shift to work on $t \in [0, \pi
   L]$ rather than $[-\pi L/2, \pi L/2]$.
    We may then use the Fourier basis $ \phi_{km}\equiv  \frac{1}{\sqrt{2\pi}} \sin ( k
t/L)e^{im\varphi}$ with $k \in \{1,2,3,\dots\}$ $m \in \ZZ.$
Then
$$
\sqrt{2\pi} \fib^2 \phi_{km} = \bigg(\frac{\C^2 k^2}{L^2} +m^2\bigg) \sin ( kt/L)
e^{im \varphi}- 2 \frac{\C k im}{L} \cos( kt/L) e^{ im \varphi}
$$
hence
$$
\ang{\fib^2 \phi_{km}, \phi_{km} } = \frac{\pi L}{2} \bigg(\frac{\C^2
  k^2}{L^2} +m^2\bigg) \geq \frac{\pi \C^2}{2 L}\norm{\phi_{km}}^2.
$$
This establishes the result for $s=0$; the more general result
then follows since $\fib^2$ commutes with $(\Id + D_t^2 + D_\varphi^2)^{s/2}$.
\end{proof}

We now prove an estimate that allows us to trade derivatives for
decay at $r=0$ in compactly supported solutions to $\tP^{(*)}u=f.$
      \begin{lemma}\label{lemma:weighted}
        Let $m \in \RR$ and let $u \in \dot{H}_b^{m+2}(K').$
Then
        $$
\norm{u}_{m,2}\lesssim \norm{P^{(*)} u}_{m,0}+ \norm{u}_{{m+2},0}
        $$
      \end{lemma}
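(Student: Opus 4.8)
The plan is to pass to the Mellin transform in $r$ and reduce the estimate to the invertibility, on the line $\operatorname{Im}\lambda=0$, of the Mellin‑conjugated normal operator of $r^2\Box_g$ at $\pa X$; the two crucial inputs are the spectral gap $\operatorname{spec}(\fib^2)\subset[\ep,\infty)$ furnished by Lemma~\ref{lemma:diophantine} and the impossibility of a nonzero eigenfunction of $\fib^2$ supported in a proper subinterval along the fibres. First I would isolate the underlying algebraic identity. Setting $v:=r^{-2}u$ and using $\Box_g=D_t^2+r^{-2}(r\pa_r)^2-r^{-2}\fib^2$ together with the elementary relations $[\fib,r]=0$ and $r^{-2}(r\pa_r)^2=(r\pa_r+2)^2 r^{-2}$, the equation $Pu=\Box_g u+\pert u$ rearranges to
\[
\big[\fib^2-(r\pa_r+2)^2\big]\,v\;=\;D_t^2u-Pu+\pert u\;=:\;g .
\]
Since $D_t^2\in\Diff_\bu^2$, $\pert\in\Diff_\bu^1$, and multiplication by $r^2$ preserves $H_b^m$, one has $\norm{g}_{m,0}\lesssim\norm{u}_{m+2,0}+\norm{Pu}_{m,0}$, while $g$ and $v$ are supported in $K'\subset\{t\in[-T',T']\}$; so it suffices to prove $\norm{v}_{m,0}\lesssim\norm{g}_{m,0}$. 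Since $\Box_g$ is formally self‑adjoint and $P^*=\Box_g+\pert^*$ with $\pert^*$ of the same form as $\pert$, the $P^*$ statement follows by the identical argument.

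Next I would Mellin transform in $r$: this replaces $r\pa_r$ by $i\lambda$ and commutes with $\fib^2$, so the displayed equation becomes $\big[\fib^2+(\lambda-2i)^2\big]\mathcal M v(\lambda)=\mathcal M g(\lambda)$. Regarding $t$ as lying in a large interval $[-\pi L/2,\pi L/2]$ containing $[-T',T']$ in its interior, the change of fibre coordinate $(t,\varphi)\mapsto(s,w)=(t/\C,\varphi-t/\C)$ turns $\fib$ into $D_s$ with Dirichlet conditions at the ends, so $\operatorname{spec}(\fib^2)=\{\mu_k:=\C^2k^2/L^2\}_{k\ge1}$ is discrete (bounded below by $\ep=\C^2/L^2$, consistent with Lemma~\ref{lemma:diophantine}), with each $\mu_k$‑eigenfunction restricting to every fibre as a nonzero solution of $-\pa_s^2c=\mu_kc$. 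Hence, after enlarging $L$ to avoid the countably many values for which $4\in\operatorname{spec}(\fib^2)$, the family $\lambda\mapsto[\fib^2+(\lambda-2i)^2]^{-1}$ is holomorphic and uniformly bounded on $H^m_{t,\varphi}$ for $\operatorname{Im}\lambda\in\{0,2\}$, and on the strip $0<\operatorname{Im}\lambda<2$ it has at most finitely many simple poles, at $\lambda_k=i(2-|\C|k/L)$ with $\mu_k\in(0,4)$, the residue there being $\Pi_{\mu_k}/(-2i\sqrt{\mu_k})$ for $\Pi_{\mu_k}$ the spectral projection of $\fib^2$. From $u\in H_b^{m+2}$ one has $\mathcal M v(\lambda)=\mathcal M u(\lambda-2i)$, holomorphic for $\operatorname{Im}\lambda>2$ and $L^2$‑valued on $\operatorname{Im}\lambda=2$, while $\mathcal M g$ is holomorphic throughout $\operatorname{Im}\lambda\ge0$; thus the identity $\mathcal M v=[\fib^2+(\lambda-2i)^2]^{-1}\mathcal M g$ (valid for $\operatorname{Im}\lambda\ge2$) continues $\mathcal M v$ meromorphically down to $\operatorname{Im}\lambda=0$, with possible poles only at the $\lambda_k$.

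The heart of the proof — and the step I expect to be the main obstacle — is to show those residues vanish. The residue of $\mathcal M v$ at $\lambda_k$ is $c_k:=\Pi_{\mu_k}\mathcal M g(\lambda_k)/(-2i\sqrt{\mu_k})$, a $\mu_k$‑eigenfunction of $\fib^2$. But for $t_0\notin[-T',T']$ the distribution $v(\cdot,t_0,\cdot)$ vanishes identically in $r$, so $\mathcal M v(\lambda,t_0,\cdot)$ is entire in $\lambda$; comparing with the meromorphic continuation forces $c_k(t_0,\cdot)=0$ for every such $t_0$, i.e.\ $c_k$ is supported in $\{t\in[-T',T']\}$. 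Restricted to a fibre, $c_k$ then solves $-\pa_s^2c_k=\mu_kc_k$ with support in a proper compact subinterval, hence $c_k\equiv0$ by uniqueness for this ODE. Therefore $\mathcal M v$ is holomorphic up to and including $\operatorname{Im}\lambda=0$, where $\mathcal M v=[\fib^2+(\lambda-2i)^2]^{-1}\mathcal M g$; since this inverse commutes with $D_t$ and $D_\varphi$ and is uniformly bounded (indeed $O(\ang{\lambda}^{-2})$) on $L^2_{t,\varphi}$ for real $\lambda$, Mellin–Plancherel yields $v\in H_b^{m,0}$ with $\norm{v}_{m,0}\lesssim\norm{g}_{m,0}$, which is the assertion. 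The contour shift and the Mellin–Plancherel bookkeeping are cleanest if one first runs the argument for $u\in\mathcal C_c^\infty(X^\circ)$ and then extends by density.
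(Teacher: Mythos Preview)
Your approach is essentially the same as the paper's: rewrite the equation as $(\fib^2+(rD_r)^2)u=r^2(D_t^2 u+\pert u+P^{(*)}u)$, Mellin transform in $r$, and invert the resulting normal family using the Poincar\'e-type lower bound of Lemma~\ref{lemma:diophantine}. The paper works directly with $u$ rather than $v=r^{-2}u$ (a cosmetic shift of the Mellin contour by $2i$) and simply asserts uniform invertibility of $\fib^2+\xi^2$; you go further by identifying the spectrum of $\fib^2$, adjusting $L$ so that $4\notin\operatorname{spec}(\fib^2)$, and supplying the residue-vanishing argument to justify the contour shift across possible poles in the strip---details the paper glosses over but which your treatment makes rigorous.
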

      \begin{proof}
The estimate is trivial away from $r=0,$ hence it suffices to prove it
for $u$ replaced by $\chi u$ with $\chi$ a cutoff near $r=0$ and equal
to $1$ near $r=0.$  As the commutator term $[P^{(*)},\chi]u$ is an
element of $\Psibf^1,$ the resulting term can be absorbed in the
$H_b^{m+2,0}$ term on the RHS, 
so in fact it will suffice to
simply prove the result for $u$ supported in $r$ small.

Treating the $D_t^2$ and $\pert$ terms in $P$ as an error term,
\begin{equation}\label{indicial}
r^{-2} \fib^2 u+ r^{-2}(rD_r)^2 u=D_t^2 u+\pert u +P^{(*)} u.
\end{equation}
Since $D_t^2 \in \Psibf^2,$ $\pert \in \Psibf^1,$ we can estimate the $H_b^{m}$ norm of the RHS by $
\norm{u}_{m+2,0}+\norm{P^{(*)} u}_{m,0}.$  It thus
suffices to show that the $H_b^m$ norm of the LHS controls
the $H_b^{m,2}$ norm of $u$.

We rewrite equation \eqref{indicial} as
$$
(\fib^2 + (rD_r)^2) u=g \equiv r^2 (D_t^2 u +P^{(*)} u + \pert u) \in r^2H_b^m.
$$
We now Mellin transform this equation in $r$, using the convention
$$
\M f (\xi) \equiv \int_0^\infty f(r) r^{-i\xi-1}\, dr.
$$
This yields
$$
(\fib^2+\xi^2)\M u (\xi,t,\varphi)=\M g (\xi,t,\varphi)
$$
where we recall that we are viewing $(t,\varphi)$ as lying in a sufficiently large
set of the form $(-\pi L/2, \pi L/2) \times S^1_\varphi$.

By Lemma~\ref{lemma:diophantine}, $\fib^2>\ep$ as an operator on $\dot{H}^m$ hence we
may invert $\xi^2+\fib^2$ with uniform bounds (and holomorphy)
in $\xi$ to get
\begin{equation}\label{ug}
\norm{\M u(\xi)}_{H^m}\lesssim \norm{\M g (\xi)}_{H^m},
\end{equation}
uniformly in $\xi,$ with the norms being Sobolev norms in
$(t,\varphi).$
We now recall the characterization of b-Sobolev spaces by Mellin
transform in \cite[Section 5.6]{Me:93}.  Since $g \in r^2 H_b^m$ its
Mellin transform is holomorphic in $\Im \xi>-2$ (with the $-2$
corresponding to the $r^2$ weight, which simply shifts the imaginary
part of the Mellin transform parameter).  The squared weighted Sobolev
norm of $g$ is equivalent
to
$$
\int_{-\infty}^\infty\ang{\xi}^m \norm{\M g(-2i+\xi)}_{H^m}^2 \, d\xi.
$$
By \eqref{ug},
$$
\int_{-\infty}^\infty\ang{\xi}^m \norm{\M u(-2i+\xi)}_{H^m}^2 \,
d\xi\lesssim \int_{-\infty}^\infty\ang{\xi}^m \norm{\M g(-2i+\xi)}_{H^m}^2 \, d\xi,
$$
and this implies a corresponding inequality of b-Sobolev norms:
$$
\norm{u}_{m,2} \lesssim \norm{g}_{m, 2}\leq \norm{D_t^2+P^{(*)} u
  +\pert u}_{m,0},
$$
as desired.
        \end{proof}

    We now upgrade \eqref{lowerbound-b} at the cost of using a stronger norm
    on $P^{(*)}\phi$ on the RHS. For brevity, we define Hilbert spaces
    $$
    \hilbX=\dot{H}_b^{s,0}(K'),\quad \hilbY =
   \dot{H}_b^{s+1,0}(K')\cap \dot{H}_b^{s-2,2}(K').
    $$
    Then adding a sufficiently small multiple of the inequality in
    Lemma~\ref{lemma:weighted} with $m=s-2$ to \eqref{lowerbound-b}
    to be able to absorb the $\norm{\phi}_{s,0}$ term on the RHS of
    the former into
    the LHS of the latter yields
	\begin{equation}\label{soupeduplb2}
		\norm{\phi}_{\hilbY} \leq C \norm{\tP^{(*)} \phi}_{\hilbX}+ C
		\norm{\phi}_{-N,1}, \quad \supp \phi \subset K'.
	\end{equation}

 We crucially note the following compact embedding:
\begin{lemma}\label{lemma:compactembedding}
	$\displaystyle
	\hilbY\hookrightarrow
	\dot{H}_b^{-N,1}(K')$ is a compact embedding for $N>2-s.$
\end{lemma}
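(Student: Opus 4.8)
The plan is to obtain the compactness from a single factor of the intersection defining $\hilbY$. By construction $\hilbY = \dot{H}_b^{s+1,0}(K') \cap \dot{H}_b^{s-2,2}(K')$ carries the intersection norm, so the inclusion $\hilbY \hookrightarrow \dot{H}_b^{s-2,2}(K')$ is bounded; hence it suffices to prove that
$$
\dot{H}_b^{s-2,2}(K') \hookrightarrow \dot{H}_b^{-N,1}(K')
$$
is compact. This is the Rellich-type theorem for weighted $b$-Sobolev spaces on a compact manifold with boundary, whose criterion is that \emph{both} the regularity order and the weight strictly decrease: here $s-2 > -N$, which is exactly the hypothesis $N > 2-s$, and $2 > 1$. (It is essential that the weight strictly drops: a bare gain of $b$-regularity at a fixed weight is never compact, since a sequence of bumps translating toward $\{r=0\}$ in the logarithmic variable $-\log r$ is bounded in every $\dot{H}_b^m$ but has no convergent subsequence. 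This is why the $\dot{H}_b^{s+1,0}$ factor of $\hilbY$ alone would not suffice, and it explains the precise form of the hypothesis.)

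To verify the compactness — equivalently, to see that the standard theorem applies in the present Lipschitz setting — I would run the familiar localization argument. Fix cutoffs $\chi_\delta = \chi_0(r/\delta)$ equal to $1$ near $\{r=0\}$ and supported in $\{r<\delta\}$, and for $u$ supported in $K'$ split $u = \chi_\delta u + (1-\chi_\delta)u$. Near $\{r=0\}$, writing $u = r^2 v$ with $v \in \dot{H}_b^{s-2}$, one has $r^{-1}\chi_\delta u = (r\chi_\delta)\,v$; since $r\chi_\delta$ is a $b$-smooth function all of whose $b$-seminorms are $O(\delta)$ uniformly in $\delta$, multiplication by it has operator norm $O(\delta)$ on every $b$-Sobolev space, so
$$
\| \chi_\delta u \|_{\dot{H}_b^{-N,1}} \lesssim \delta\, \| v \|_{\dot{H}_b^{-N}} \lesssim \delta\, \| v \|_{\dot{H}_b^{s-2}} \lesssim \delta\, \| u \|_{\hilbY},
$$
using $s-2 \ge -N$ to drop regularity in the middle step. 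Away from $\{r=0\}$, the piece $(1-\chi_\delta)u$ is supported in a fixed bounded Lipschitz subdomain of $X^\circ$ on which $b$-Sobolev norms coincide with ordinary Sobolev norms, so a bounded sequence in $\dot{H}_b^{s-2,2}(K')$ maps under $(1-\chi_\delta)$ to a bounded sequence in $\dot{H}^{s-2}$ of that subdomain, which is precompact in $\dot{H}^{-N} = \dot{H}_b^{-N,1}$ there by the Rellich theorem for Lipschitz domains (using $s-2 > -N$), a form of which is available from \cite{Mc:00}.

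Assembling these in the usual way finishes the proof: given a bounded sequence $(u_j)$ in $\hilbY$, for each $\ell \in \NN$ choose $\delta_\ell$ so small that $\|\chi_{\delta_\ell} u_j\|_{\dot{H}_b^{-N,1}} < 1/(2\ell)$ for all $j$, then diagonalize to extract a subsequence along which $(1-\chi_{\delta_\ell})u_j$ converges in $\dot{H}_b^{-N,1}$ for every $\ell$; the triangle inequality shows this subsequence is Cauchy, hence convergent, in the complete space $\dot{H}_b^{-N,1}(K')$. I do not anticipate any real obstacle here: the only things to watch are the two strict inequalities $N>2-s$ and $2>1$, and the verification that the $r^2$-weight is exactly what makes the piece localized near $\{r=0\}$ uniformly small. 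The statement is simply the Rellich theorem in the $b$-category, and one may alternatively cite it directly (cf.\ the Mellin-transform characterization of $b$-Sobolev spaces in \cite[Section 5.6]{Me:93}).
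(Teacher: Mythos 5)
Your argument is correct, and it reaches the conclusion by a slightly different and in fact more economical route than the paper. The paper first interpolates between the two factors of $\hilbY$ to land in $\dot{H}_b^{s-1/2-3\delta/2,\,1+\delta}(K')$ and then invokes the standard compactness criterion for weighted b-Sobolev embeddings (both the order and the weight must strictly decrease), citing \cite[Lemma 6.6]{MeMe:83}. You observe that the interpolation is unnecessary for the stated hypothesis: the single factor $\dot{H}_b^{s-2,2}(K')$ already embeds compactly into $\dot{H}_b^{-N,1}(K')$, since $s-2>-N$ is exactly $N>2-s$ and $2>1$; this is the paper's argument at the endpoint $\delta=1$, where the ``interpolated'' space is just the second factor. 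You then supply a self-contained localization proof of the weighted Rellich criterion (the $r^2$ weight making the piece near $r=0$ uniformly small, ordinary Rellich handling the piece supported away from $r=0$, followed by a diagonal extraction), where the paper simply cites the literature; your parenthetical explaining why a gain of b-regularity at fixed weight is never compact is exactly the right reason the weight must drop. The only thing your simplification gives up is latent in the paper's proof rather than in its statement: interpolating with small $\delta>0$ would yield compactness under the weaker hypothesis $N>1/2+3\delta/2-s$, but since the lemma only claims the result for $N>2-s$, nothing is lost.
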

\begin{proof}
  By interpolation
  $$
          \hilbY=
\dot{H}_b^{s+1,0}(K')\cap \dot{H}_b^{s-2,2}(K')\subset
\dot{H}_b^{s-1/2-3\delta/2, 1+\delta}(K'),\quad |\delta| \le 1.
$$
Choosing $\delta \in (0, 1]$ then gives a compact embedding
$$
\dot{H}_b^{s-1/2-3\delta/2, 1+\delta}(K')\hookrightarrow \dot{H}_b^{-N, 1}(K'),
$$
since the space on the LHS has both greater differentiability when $N >s-2$  and has
greater decay
at $r=0$ (see, e.g., \cite[Lemma 6.6]{MeMe:83}).
\end{proof}
	
	Thus equation $\eqref{soupeduplb2}$ establishes an inequality with the functional analytic properties that will be necessary for the solvability argument below.
  
     Now we consider the nullspace of $P_W^*$, which will be the finite dimensional space $N$ in the statement of Theorem \ref{theorem:forward}.  Let $$N\equiv N(\tP^*)\equiv \{ u \in \dot{\Hone}(K'): \tP^* u=0\}.$$
      \begin{lemma}\label{lemma:N}
$N(\tP^*)$ is
a finite-dimensional subspace of $\HbHone^\infty.$  If Hypothesis
\ref{hyp:analytic} or \ref{hyp:tphi} holds, then the elements of $N(\tP^*)$ are
supported in $\{r>R\}.$
        \end{lemma}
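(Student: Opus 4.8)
The plan is to split the lemma into two independent assertions: (i) $N(\tP^*)$ is finite dimensional and contained in $\HbHone^\infty$, which is a soft consequence of the a priori estimates already established; and (ii) under Hypothesis~\ref{hyp:analytic} or \ref{hyp:tphi}, every $u\in N(\tP^*)$ vanishes on $\{r<R\}$, which is a unique continuation argument exploiting the inertness of the complex absorbing potential $W$ there. For (i): given $u\in N(\tP^*)$, i.e.\ $u\in\dot{\Hone}(K')$ with $\tP^* u=0$, Proposition~\ref{prop:Fredholm1} applied with $\tP^{(*)}=\tP^*$ gives $\norm{u}_{\HbHone^s}\le C_{s,N}\norm{u}_{\HbHone^{-N}}$ for all $s$, so $u\in\HbHone^\infty$; since $W^*$ has Schwartz kernel supported in $\{r>R\}^2$ we have $P^* u=-iW^* u$ with $W\in\Psibf^2(X)$, and Lemma~\ref{lemma:weighted} (with $P^{(*)}=P^*$) then gives $\norm{u}_{m,2}\lesssim\norm{u}_{m+2,0}<\infty$ for all $m$, so $u\in\hilbY$ for the fixed $s$ appearing in \eqref{soupeduplb2} (with $N>2-s$ as in Lemma~\ref{lemma:compactembedding}). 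Applying \eqref{soupeduplb2} to elements of $N(\tP^*)$ and to their differences shows that $N(\tP^*)$ is closed in $\dot{H}_b^{-N,1}(K')$ and that its $\hilbY$-norm and $\dot{H}_b^{-N,1}(K')$-norm agree there; since $\hilbY\hookrightarrow\dot{H}_b^{-N,1}(K')$ is compact (Lemma~\ref{lemma:compactembedding}), the identity on $N(\tP^*)$ is compact, hence $\dim N(\tP^*)<\infty$.

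For (ii), fix $u\in N(\tP^*)$. Because $W^* u$ vanishes on $\{r<R\}$ we get $P^* u=0$ there, and since $u\in\dot{\Hone}(K')$ is compactly supported in $t$ it vanishes on $\{t<-T'\}$; I will propagate this vanishing throughout the open set $\{0<r<R\}$, on which $P^*$ is a first-order perturbation of $\Box_g$ with smooth coefficients (real-analytic away from $r=0$ under Hypothesis~\ref{hyp:analytic}) and shares its real principal symbol $p=\taub^2-r^{-2}\xib^2-r^{-2}(\C\taub+\etab)^2$. Two geometric facts: the hypersurfaces $\{r=c\}$ ($0<c<R$) are everywhere non-characteristic, since on their conormal $\{\taub=\etab=0\}$ one has $p=-r^{-2}\xib^2\neq0$; and the hypersurfaces $\{t=c\}$ are non-characteristic wherever $r\neq\abs{\C}$, since on their conormal $\{\xib=\etab=0\}$ one has $p=\taub^2(1-\C^2/r^2)$, so in particular they are non-characteristic on $\{0<r<\abs{\C}\}$. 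Under Hypothesis~\ref{hyp:analytic} one then invokes Holmgren's uniqueness theorem twice. First, along the foliation of $\{0<r<\abs{\C}\}\times\RR_t\times S^1_\varphi$ by the non-characteristic leaves $\{t=c\}$, starting from the leaves with $c<-T'$ on which $u$ already vanishes, a standard connectedness argument gives $u\equiv0$ on $\{0<r<\abs{\C}\}$. Second, along the foliation of $\{0<r<R\}\times\RR_t\times S^1_\varphi$ by the non-characteristic leaves $\{r=c\}$, starting from the leaves with $c<\abs{\C}$ on which $u$ now vanishes, the same argument gives $u\equiv0$ on $\{0<r<R\}$. By continuity of $u$ this yields $\supp u\subset\{r\ge R\}$.

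Under Hypothesis~\ref{hyp:tphi} instead, $\pert$ ---hence $P$, $W$ and all their adjoints--- commutes with $\pa_t$ and $\pa_\varphi$, so we may expand $u=\sum_{k\in\ZZ}u_k(t,r)e^{ik\varphi}$ in Fourier modes; each $u_ke^{ik\varphi}\in\dot{\Hone}(K')\cap\HbHone^\infty$ is compactly supported in $t$ and solves $P^*(u_ke^{ik\varphi})=0$ on $\{r<R\}$. The mode operator $P^*_k$ in $(t,r)$ is elliptic on $\{0<r<\abs{\C}\}$ with smooth coefficients---its second-order part there being $(\C^2/r^2-1)\pa_t^2+\pa_r^2$, with both coefficients positive, exactly the elliptic region of the mode equation studied in \cite{MoWu:21}---so, $u_k$ vanishing on the nonempty open subset $\{0<r<\abs{\C},\ t<-T'\}$ of the connected set $\{0<r<\abs{\C}\}\times\RR_t$, unique continuation for second-order elliptic operators forces $u_k\equiv0$ on $\{0<r<\abs{\C}\}$. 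The vanishing of $u_k$ on this elliptic region then propagates out to $\{0<r<R\}$ exactly as in the proof of Lemma~6 of \cite{MoWu:21} (with zero right-hand side in place of the $\delta$ used there, and the lower-order $r$-dependent term $\pert_k$ absorbed), and summing over $k$ and using continuity of $u$ again gives $u\equiv0$ on $\{r<R\}$. The substantive part of the lemma is assertion (ii)---and within it, carrying the vanishing out of the elliptic region $\{r<\abs{\C}\}$ across the type-change surface $\{r=\abs{\C}\}$ of $\Box_g$---which is precisely where the two auxiliary hypotheses enter; assertion (i) is routine given Proposition~\ref{prop:Fredholm1}, \eqref{soupeduplb2}, Lemma~\ref{lemma:weighted} and Lemma~\ref{lemma:compactembedding}.
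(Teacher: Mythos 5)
Most of your proof runs parallel to the paper's: part (i) is the paper's argument (Proposition~\ref{prop:Fredholm1} for smoothness, then \eqref{soupeduplb2} plus Lemma~\ref{lemma:compactembedding} to make the unit ball of $N(\tP^*)$ compact), the Hypothesis~\ref{hyp:tphi} branch (mode decomposition, ellipticity of the mode operator on $\{r<\abs{\C}\}$, then continuation in $r$ out to $R$ as in \cite{MoWu:21} --- the paper does this last step by Fourier transforming in $t$ and invoking Picard--Lindel\"of for the resulting ODE in $r$) is the same, and your second Holmgren sweep by the cylinders $\{r=c\}$, anchored in $\{\abs{t}>T'\}$ where $u$ vanishes by compact $t$-support, is a legitimate and in fact simpler substitute for the paper's family $\{r=\chi_s(t)\}$.

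The gap is in the first Holmgren sweep under Hypothesis~\ref{hyp:analytic}. You foliate $\{0<r<\abs{\C}\}$ by the flat leaves $\{t=c\}$ and assert that ``a standard connectedness argument'' propagates the vanishing from $c<-T'$ upward. But John's global Holmgren theorem requires the deforming surfaces either to be closed or to have their boundaries confined to the known zero set of $u$; the leaves $\{t=c\}\cap\{0<r<\abs{\C}\}$ have ends at $r=0$ (where the coefficients are singular) and at $r=\abs{\C}$ (where $\{t=c\}$ becomes characteristic, since $p=\taub^2(1-\C^2/r^2)$ on its conormal), and neither end lies in a region where $u$ is known to vanish for $c>-T'$. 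Local Holmgren plus connectedness only yields vanishing on a set of the form $\{t<-T'+H(r)\}$ with $H$ possibly bounded and degenerating as $r\to 0$ or $r\to\abs{\C}$; it cannot by itself rule out nonzero $u$ entering $\{r<\abs{\C}\}$ at later times from the annulus $\abs{\C}<r<R$ where nothing is yet known. This is exactly why the paper instead uses the anchored family $\hypsurf_s=\{t=-T'+s\chi_0(r)\}$ with $\chi_0$ a bump supported in $(\ep/2,\abs{\C}-\ep/2)$: these coincide with $\{t=-T'\}\subset\{u=0\}$ outside a compact $r$-interval, and --- the key computation you are missing --- they remain noncharacteristic for \emph{every} $s$, however steep, because on $N^*\hypsurf_s$ one has $p=\taub^2\bigl(1-s^2(\chi_0'(r))^2\bigr)-\C^2\taub^2/r^2<0$ thanks to $r<\abs{\C}$. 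Letting $s\to\infty$ then sweeps out all of $\{\ep<r<\abs{\C}-\ep,\ t>-T'\}$. With that replacement your argument closes.
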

        \begin{proof}
Let $u \in N(\tP^*).$ By our global propagation of singularities theorem, Proposition~\ref{prop:Fredholm1},
$$
u \in \HbHone^\infty(X).
$$
Furthermore, we employ \eqref{soupeduplb2} to see that by Lemma \ref{lemma:compactembedding}, the unit ball in $N(P_W^*)$ is compact in the $H_{b}^{-N,1}$ topology, so $N(P_W^*)$ is finite dimensional. 

We now show that under our stronger hypotheses, $N$ is in fact trivial.
Recall that $K'=\{r<2R,\ t \in [-T', T']\},$ so
that $u=0$ for $t \leq -T'.$

First, consider the case of the \ref{hyp:analytic} assumption. For any $\ep>0,$ fix a bump function $\chi_0(r)$ such that
$$
\chi_0(r)>0,\ r \in (\ep, \abs{\C}-\ep),\quad \supp \chi_0 \subset (\ep/2, \abs{\C}-\ep/2).
$$
For $s \in [0, \infty),$ let $\hypsurf_s$ denote the hypersurface
$$
\hypsurf_s = \{(t,r,\varphi): t=-T' +s \chi_0(r),\ \varphi \in S^1\}.
$$
We compute
$$
N^* \hypsurf_s =\spn\{dt-s \chi_0'(r) \, dr\}=\{\xib=r s \chi_0'(r) \,
\taub,\ \etab=0\}.
$$
Thus, on $N^*\hypsurf,$
$$
p=\sigma_b^2(P)=\taub^2- s^2
(\chi_0'(r))^2\taub^2-\frac{\C^2\taub^2}{r^2} < 0\quad \text{ for }
\taub\neq 0.
$$
Hence the whole family $\hypsurf_s$ is noncharacteristic, and by Fritz
John's global Holmgren Theorem, subject to Hypothesis~\ref{hyp:analytic}
(analyticity), we obtain $u=0$ on $\hypsurf_s$ for all $s$.  Taking $s
\to \infty$ we thus find that for any $\ep>0,$
$$
u=0 \text{ on } \{\ep<r<\abs{\C}-\ep\},
$$
hence by continuity, $u\equiv 0$ on $\{r \in [0, \abs{\C}]\}.$

Now let
$\tilde{\chi}(t)\in \mathcal{C}_c^\infty$ with
$\tilde{\chi}(t)=1$ for all $t \in [-1,1].$  Let $\Gamma$ be a smooth nondecreasing
function on $[0,\infty)$ with $$\Gamma(0)=0,\ \Gamma(s)=R-\abs{\C} \text{ for }
s \geq
1.$$
For any $\mu>0,$ set $$\chi_s(t) = \abs{\C} +\Gamma(s \tilde{\chi}(t/\mu)).$$
Note $\Gamma(s \tilde{\chi}(t/\mu))$ is compactly supported in $t,$ uniformly in $s\geq
0;$ if we take $\mu>T'$ then for each $t \in [-T', T']$,
$\chi_s(t) =R$ whenever $s\geq 1$.  Moreover, for any $s,\mu$
$$
\abs{\chi_s'(t)} \leq \sup \abs{\Gamma'} \sup \abs{\tilde\chi'}s \mu^{-1};
$$
by enlarging $\mu$ further we may thus ensure that
$$
\sup \abs{\chi_s'(t)} \leq 1,\ \text{for all } s\in [0,1].
$$

Now we again apply the global Holmgren theorem, this time for the
family of surfaces $\tilde{\hypsurf}_s= \{r=\chi_s(t)\}$, $s \in [0,1]$.  Then
$$
N^*\tilde{\hypsurf}_s = \{\taub= r^{-1} \chi_s'(t)\xi,\ \etab=0\}
$$
hence on this set
$$
p= -\frac{\xib^2}{r^2} \big(1+ (\chi_s'(t))^2  ((\C^2/r^2)-1)\big).
$$
We have $p <0$ for $\xi \neq 0$ since $r\geq \abs{\C}$ on $\hypsurf_s$
and $\abs{\chi_s'}<1$.
Hence again the surfaces are noncharacteristic and the coefficients of
$\tP^*$ are analytic on $\tilde{\hypsurf}_s$ for all $s \geq 0$ since we have
arranged that they all lie in $\{r\leq R\}.$
Thus, once again by the
global Holmgren theorem we find that $u=0$ on $\tilde{\hypsurf}_s$ for all $s,$
and these surfaces sweep out all of $K' \cap \{r \in [\abs{\C},R] \}.$  Thus we have
obtained $u=0$ on $K' \cap \{r\leq R\},$ as asserted.  (Note that
similar arguments to this one, using the Holmgren theorem, were employed in \cite{Ba:02} in the
proof of Theorem~3.5.)

We now turn instead to the case of Hypothesis~\ref{hyp:tphi} on the
perturbation $\pert.$  Since $[\pert, \pa_\varphi]=0,$ we may decompose $u \in
N(\tP^*)$ in angular modes, $$u=\sum u_k e^{ik\varphi},$$
where $u_k$ solves the mode-by-mode equation
\begin{equation}\label{mode}
\big(\frac 1{r^2} (\C\pa_t+ik)^2 - \pa_t^2 + \pa_r^2 + \frac 1r \pa_r
+\pert_k\big) u_k(t,r)=0,\ \text{ for } r\leq R
\end{equation}
with $\pert_k$ again first-order.  We have thus reduced to (a perturbation
of) the equation considered in \cite{MoWu:21}, and we proceed as in
that paper: First, note that the operator on the LHS of \eqref{mode}
is now \emph{elliptic} in $t,r$  for $t<\C,$ hence by unique continuation for
elliptic equations, $u_k=0$ for $r<\C.$  Now we Fourier transform in
$t$ and use $t$-invariance of $\pert_k$ to find
\begin{equation}\label{modefourier}
\big(-\frac 1{r^2} (\C\tau+k)^2 +\tau^2 + \pa_r^2 + \frac 1r \pa_r
+\hat{\pert}_k\big) \hat u_k(\tau, r)=0,\ \text{ for } r\leq R
\end{equation}
where $\hat{\pert}_k(\tau, r)$ is a first order operator in $r$ obtained as the Fourier conjugate of $\pert_k$ (i.e., $D_t$ turns
into $\tau$).  By Picard--Lindel\"of, since the function $\hat
u_k(\tau,r)$ vanishes for $r<\C,$ it vanishes identically on $r\leq
R.$  Thus, recovering $u$ by Fourier synthesis, we find that $u=0$ for
$r\leq R$ as well,
and we have obtained the result under Hypothesis~\ref{hyp:tphi}.
          \end{proof}

          Now we turn to the solvability argument, which is inspired
          by the work of Duistermaat--H\"ormander \cite[Theorem 6.3.1]{DuHo:72}; see also \cite{MoWu:21} for
the analogous argument in the mode-by-mode case.

  Assume  $\phi \perp N(\tP^*).$  Then we
claim that \eqref{soupeduplb2} (applied to $\tP^*$) can be replaced by
\begin{equation}\label{betterlowerbound}
  \norm{\phi}_{\hilbY} \leq C \norm{\tP^{*} \phi}_{\hilbX},\quad
  \supp \phi \subset K',
\end{equation}
i.e., the error term involving $\phi$ on the RHS can be dropped.  To
see this, note that if \eqref{betterlowerbound} fails then there
exists a sequence of $$\phi_j\in \hilbY \cap N(\tP^*)^\perp$$
with \begin{equation}\label{normalize}\norm{\phi_j}_{\hilbY} =1,\ \norm{\tP^* \phi_j}_{\hilbX} \to 0.\end{equation}
Extracting a subsequence in $\hilbY,$ converging
weakly in that space to $\phi \in  \hilbY \cap N(\tP^*)^\perp,$ we then obtain
$$
\tP^* \phi_j \to \tP^* \phi
$$
in the distributional sense.  Thus since $\tP^* \phi_j\to 0$ in $\hilbX$ we obtain $\tP^*
\phi=0,$ which implies $\phi=0.$ On the other hand, by Lemma
\ref{lemma:compactembedding},  
$\phi_j$ is strongly convergent in $\dot{H}_b^{-N,1}(K'),$ hence the limit $\phi$ must have ${H}_b^{-N,1}(K')$ norm
bounded below by $1,$ by \eqref{soupeduplb2} and \eqref{normalize}.  Thus we obtain a contradiction, and this yields
the improved estimate \eqref{betterlowerbound}.

Now for $f \in \hilbY^*\cap N(\tP^*)^\perp,$ consider the map
$$
T: \tP^* \phi \mapsto \ang{\phi, f}.
$$
By the constraint on $f,$ this map is well-defined on the range of $\tP^*$ on
$\mathcal{C}_c^\infty((K')^\circ)$, considered as a subset of
$\hilbX.$  The estimate \eqref{betterlowerbound} then implies
$$
\abs{T(\tP^* \phi)} \leq C \norm{\tP^* \phi}_{\hilbX} \norm{f}_{\hilbY^*}.
$$
By Hahn--Banach, we now extend $T$ to a map defined on all of $\hilbX$, satisfying
$$
\abs{T\psi} \leq C \norm{\psi}_{\hilbX} \norm{f}_{\hilbY^*}.
$$
Thus by the Riesz Lemma, there exists $u \in \hilbX^*$ such that
$$
T\psi=\ang{\psi, u},
$$
hence for all test functions $\phi$ we certainly have
$$
\ang{\tP^* \phi, u} = \ang{\phi, f}
$$
hence $u$ solves $\tP u=f.$
Since $\tP=P$ on $K,$ this certainly implies that $Pu=f$ on $K$.

In particular, then, given $f \in N^\perp \cap \hilbY^*$, we can solve $Pu=f$ on $K^\circ$ for $u \in \hilbX^*$.
We note that $$\hilbY^* = (\dot{H}_b^{s+1,0}(K')\cap \dot{H}_b^{s-2,2}(K'))^*
\supset H_b^{-s-1,0}((K')^\circ),$$
while
$$
\hilbX^* = H_b^{-s}((K')^\circ);
$$
setting $s=-m-1$ proves the existence of a solution in the desired
space. By Lemma~\ref{lemma:N}, if the stronger hypothesis 
Hypothesis~\ref{hyp:analytic} or \ref{hyp:tphi} holds, then
orthogonality to $N$ is no constraint on $f$ (since $f$ is by hypothesis
supported in $\{r\leq R\}$).

To prove the wavefront set relation \eqref{WFrelation} we begin by
establishing the 
stronger statement \eqref{betterWFrelation}:
$$
\WFbh u\backslash \WFbhstar f \subset \fcal_O\cup \Phi_+ (\WFbhstar f\cap \Sigma).
$$
At points not in $\Sigma\cup \Tbdotstar X$ this follows from
microlocal elliptic regularity (Proposition~\ref{prop:ellipticreg}).
At interior points in $\Sigma \backslash \fcal_O,$ it follows from interior
propagation of singularities. Indeed, by Lemma \ref{lemma:bichars}, at
every point in $\proj^{-1}(K)\setminus \mathcal{F}_O$, the
(asymptotically) backwards in time flow through that point eventually
hits the elliptic set of $W$ while remaining within $K'$, where $u$
solves $P_W u=f$.  Owing to our choice of signs for
$W$, regularity propagates (asymptotically) forward in time. Thus a
point in $\Sigma \setminus \mathcal{F}_O$ is only in the LHS if it
reaches $\WFbhstar f \cap \Sigma,$ where there may be wavefront set,
backwards in time. The weaker statement \eqref{WFrelation} then
follows from Lemma~\ref{lemma:WFinclusions}

To obtain uniqueness, we note that subtracting two solutions $u_0$,
$u_1$ yields $w=u_0-u_1$ such that
$$
\begin{aligned}
   &\tP w=0 \text{ on } K, \\ &\WFbh w \subset \fcal_O\cup \Phi_+ (\WFbhstar f\cap \Sigma).
\end{aligned}
$$
At every point $\rho\notin \fcal_O$, backward flow eventually
reaches $T^* K \backslash T^* K_0$, before leaving $K$ entirely.  (Recall that $f$ is supported in
$K_0$.)  At such points, neither $u_0$ nor $u_1$ has wavefront set,
since
$$
\WFbh u_\bullet \subset \fcal_O\cup \Phi_+ (\WFb f\cap \Sigma),
$$
and these points are neither outgoing nor in the forward flowout of $\WF f$.  Hence by
interior propagation of singularities for the equation $\tP u=0$,
$\rho \notin \WF w$, and the asserted uniqueness follows.

\appendix

\section{The uniform b-calculus}\label{appendix:bcalculus}

The usual construction of the b-pseudodifferential calculus, e.g.\ as
in \cite{Me:93}, is in the context of compact manifolds with
boundary.  Here we work on $X=[\RR^3; \strng]$, which is noncompact (recall $\strng = \{ x_1 = x_2 = 0 \}$). The noncompactness in $r$ is of no consequence here, as we always do
our estimates in a neighborhood of $\strng$ (or its lift to the
blowup), but the noncompactness in $t$ is a more serious issue and
worthy of comment.

Happily, the treatment of the subject by H\"ormander in \cite[Section 18.3]{Ho:07}
begins by developing the calculus on a half-space with just the sort
of uniform symbols estimates that we require here.  While the later
passage to manifolds in that work (Definition 18.3.18) is only phrased
in terms of local estimates (since Definition 18.2.6, giving the
conormal distributions used here, relies on \emph{local} Besov
spaces), we may still make use of the half-space construction here in order
to work on $$X\equiv [0, \infty)_r \times \RR_t \times
\RR_\varphi/2\pi \ZZ$$ by identifying functions on $X$ with functions
on $\RR^3_+$ that are $2\pi \ZZ$-periodic in the $\varphi$ variable.

Thus, following \cite{Ho:07}, we may define the Schwartz kernel of the
b-quantization of a symbol $a \in S_\unif^m$ as
\begin{equation}\label{bquantization}
\kappa(\Op_b(a)) \equiv \int e^{i [(r-r')\xi/r
  +(t-t')\tau + (\varphi-\varphi)' \eta]} a(r,t,\varphi, \xi, \tau,
  \eta) \, d\xi d\tau d\eta \abs{dt\, d\varphi \, dr/r}.
\end{equation}
 Here we have taken the form of the kernel (18.3.4) from \cite{Ho:07}
 with $x_n$ denoted $r,$ and made a change of fiber variable; we include the half-density factor necessary
 to make the operator act on functions (with the $r^{-1}$ arising
 naturally from change of fiber variables).  The $\kappa$ denotes the
 Schwartz kernel of the operator in question.

Recall that the class of symbols $a$ considered in \cite{Ho:07} and
used here \eqref{Sunif} are those
 satisfying the uniform (in $r, t,\varphi$) estimate
\begin{equation}\label{bsymbol}
\big \lvert\pa_{r,t,\varphi}^\alpha \pa_{\xi,\tau, \eta}^\beta a\big
\rvert \leq C_{\alpha,\beta, N} \ang{(\xi,\tau,\eta)}^{m-\abs{\beta}}\ang{r}^{-N}.
\end{equation}
Owing to the
periodicity in $\varphi,$ we make the additional requirement
\begin{equation}\label{periodic}
a(r,t,\varphi+2 \pi
k,\xi,\tau,\eta)=a(r,t,\varphi,\xi,\tau,\eta),\quad k \in \ZZ.
\end{equation}
And following \cite{Ho:07}, in order that this quantization produce a
sensible operator on $r\geq 0,$ we also require the ``lacunary
condition''
\begin{equation}\label{lacunary}
\fcal_{\xi \to w} a =0, \quad \text{ for } w\leq -1,\ r \geq 0.
\end{equation}
\begin{remark}
In the language of blowups, we remark that the lacunary condition means that
all derivatives of $\kappa(A)$ vanish at $s=0$ where $s=r'/r$
is a smooth variable along the interior of the front face of the
blowup $[X \times X; (\pa X)^2].$  Note that we may view the $\xi$
integration in the quantization \eqref{bquantization} as the Fourier
transform in $\xi,$ evaluated as $w=s-1.$
The set $\{w=-1\},$ a.k.a.\ $\{s=0\}$ is the ``right face''
of the blowup, at $r'=0.$  Rapid vanishing at the ``left face'' where
$r=0$ is, by contrast, automatic owing to the rapid decay of the
Fourier transform of a symbol in the base variables, since $s\to
+\infty$ as we approach this face.  Hence the operators obtained in
this way are indeed locally (in $t$)  the same as those described in \cite[Definition
4.22]{Me:93}, except that here we have built a right b-density
into the definition of the operator in order to let it act on functions.
\end{remark}

\begin{definition}\label{definition:psib}
An operator $A$ is in $\Psib^m(X)$ if it can be written as $\Op_b(a)$ for
some $a$ satisfying \eqref{bsymbol}, \eqref{periodic}, \eqref{lacunary}.
  \end{definition}

We further note, as in \cite[Section 5.3.1]{Zw:12}, that quantizing
our $\varphi$-periodic symbols as in \eqref{bquantization} and applying the
result to a $\varphi$-periodic distribution $u$ results in a $\varphi$-periodic
distribution, hence the action of $\Op_b(a)$ is well-defined on
sufficiently regular and decaying functions on $X.$

For $a$ satisfying \eqref{bsymbol} and \eqref{lacunary}, the boundedness of $\Op_b(a)$ on a
half-space is \cite[Theorem 18.3.12]{Ho:07}; the result then follows
on $X = [0,\infty) \times \RR/2 \pi \ZZ \times \RR$ by applying the
proof of Theorem 5.5 of \cite{Zw:12} in the $t$-variable to deal with
periodic functions.  This of
course yields boundedness with respect to
$$
L^2(X; dr \, dt \, d\varphi)
$$
rather than the metric volume form, however.  To obtain boundedness
with respect to the metric volume form $r dr \, dt \, d\varphi,$ we
simply note the following
\begin{lemma}\label{lemma:conjugation}
For all $m, k \in \ZZ,$
$$
A \in \Psib^m(X) \Longleftrightarrow r^{-k}A r^{k} \in \Psib^m(X).
$$
\end{lemma}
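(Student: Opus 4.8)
The plan is to reduce the assertion to conjugation by $r^{\pm1}$ and to treat each of those by a short manipulation of Schwartz kernels, invoking closure of the calculus under adjoints for one of the two cases. First, it suffices to prove the single implication $A\in\Psib^m(X)\Rightarrow r^{-k}Ar^k\in\Psib^m(X)$ for every $k\in\ZZ$: the reverse implication then follows by applying this with $-k$ in place of $k$ to the operator $r^{-k}Ar^k$. Moreover, writing $r^{-(k\pm1)}Ar^{k\pm1}=r^{\mp1}\bigl(r^{-k}Ar^k\bigr)r^{\pm1}$ and inducting on $|k|$, the problem reduces to the two base cases $k=1$ and $k=-1$.

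For $k=1$ I would argue directly. Write $A=\Opb(a)$ with $a$ satisfying \eqref{Sunif}, \eqref{periodic}, \eqref{lacunary}. The Schwartz kernel of $r^{-1}Ar$ is $(r'/r)\,\kappa(A)$, and in the oscillatory integral \eqref{bquantization} the variable $r'$ enters only through the phase $(r-r')\xi/r=\xi(1-r'/r)$. Since $(r'/r)\,e^{i\xi(1-r'/r)}=(1+i\partial_\xi)e^{i\xi(1-r'/r)}$, one integration by parts in $\xi$ (the boundary terms vanishing by the rapid decay of $a$) identifies $r^{-1}Ar$ with $\Opb(b)$, where $b=(1-i\partial_\xi)a$. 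It then remains to check that $b$ is an admissible total symbol: one has $b\in S_\unif^m(\Tbstar X)$ because $b-a=-i\partial_\xi a\in S_\unif^{m-1}$; periodicity of $b$ in $\varphi$ is immediate; and, writing $w=r'/r-1$ as in the remark following \eqref{lacunary}, so that the $\xi$-integration is the Fourier transform in $\xi$ evaluated at $w$, the transform of $b$ equals $(1+w)$ times the transform of $a$, whence multiplication by the polynomial $1+w$ cannot enlarge the support in $\{w\le-1\}$ and the lacunary condition \eqref{lacunary} persists. Thus $r^{-1}Ar\in\Psib^m(X)$.

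For $k=-1$ the corresponding manipulation would require dividing the $w$-transform by $1+w$, which has a pole on the right face; rather than confront that I would argue by duality. Since $\Psib^\bullet(X)$ is a $\star$-algebra — the Proposition whose proof occupies this appendix — we have $A^*\in\Psib^m(X)$, the adjoint being taken in $L^2(X;dr\,dt\,d\varphi)$, for which multiplication by $r$ is self-adjoint (as it also is for the metric volume form). Hence $(rAr^{-1})^*=r^{-1}A^*r$, which lies in $\Psib^m(X)$ by the case $k=1$; one more application of closure under adjoints gives $rAr^{-1}\in\Psib^m(X)$. Together with the reduction above this establishes the lemma; alternatively, the two base cases can be read off from Lemma~\ref{lemma:indicial}, \eqref{i2}.

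The only step here that is more than bookkeeping is the verification, in the case $k=1$, that the lacunary condition \eqref{lacunary} survives the kernel manipulation — equivalently, keeping straight the correspondence between the physical ratio $r'/r$ and the Mellin-type frequency $w$ — and this is precisely what the remark following \eqref{lacunary} is designed to make transparent, so I anticipate no genuine difficulty.
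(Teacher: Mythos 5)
Your argument is essentially the paper's own proof: conjugation by a positive power of $r$ is handled by multiplying the kernel by $(r'/r)^k$ and integrating by parts in $\xi$ to replace $a$ by $(1\mp D_\xi)^k a$ (checking symbol estimates, periodicity, and the lacunary condition, which survives because the $w$-side operation is multiplication by the polynomial $(1+w)^k$), and negative powers are handled by duality via closure of $\Psib^\bullet(X)$ under adjoints. The only cosmetic differences are that you reduce to the base cases $k=\pm1$ by induction where the paper treats all $k\in\NN$ at once, and that you spell out the verification of the lacunary condition that the paper merely asserts.
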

\begin{proof}
  First, let $k \in \NN.$  We note that, by integration by parts in $\xi$,
  $$
\kappa(r^{-k}A r^{k}) = \Op_b\big((1-D_\xi)^k a).
$$
The symbol $(1-D_\xi)^k a$ satisfies \eqref{bsymbol},
\eqref{periodic}, \eqref{lacunary} if $a$ does, hence $r^{-k}A r^{k}
\in \Psib(X)$ for $k \in \NN.$  The case of general $k$ now follows by
duality; recall that the calculus is closed under adjoints by
\cite[Theorem 18.3.8]{Ho:07}.
  \end{proof}
  Thus we obtain
  \begin{proposition}
    An operator $A \in \Psib^m(X)$ is bounded $\Hb^{s,l}\to
    \Hb^{s-m,l}$ for all $m, s,
    l \in \RR.$
    \end{proposition}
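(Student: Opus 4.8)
The plan is to peel off the weight and then run the familiar ``sandwich between elliptic operators'' argument. First I would reduce to $l=0$: since $\Hb^{s,l}(X)=r^l\Hb^s(X)$ by \eqref{doubleindex}, boundedness of $A\in\Psib^m(X)$ from $\Hb^{s,l}(X)$ to $\Hb^{s-m,l}(X)$ is the same as boundedness of the conjugate $r^{-l}Ar^l$ from $\Hb^s(X)$ to $\Hb^{s-m}(X)$, so it suffices to know $r^{-l}\Psib^m(X)\,r^l=\Psib^m(X)$ for every real $l$. For $l\in\ZZ$ this is Lemma~\ref{lemma:conjugation}; for general real $l$ the same computation gives $\kappa(r^{-l}Ar^l)=\Opb\bigl((1-D_\xi)^l a\bigr)$ when $A=\Opb(a)$, and one checks that the Fourier multiplier $(1-D_\xi)^l$, well defined on symbols satisfying the lacunary condition \eqref{lacunary}, preserves the class of uniform symbols of order $m$ --- equivalently, in the Mellin picture of the b-calculus, conjugation by $r^l$ only shifts the imaginary part of the spectral parameter. (If one wished to avoid this, one could instead interpolate between the integer weights, using $[\Hb^{s,0},\Hb^{s,1}]_\theta=\Hb^{s,\theta}$.)

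For the case $l=0$ I would first fix, for every $\sigma\in\RR$, a globally elliptic $\Lambda_\sigma\in\Psibf^\sigma(X)$ with a two-sided parametrix $\Lambda_{-\sigma}\in\Psibf^{-\sigma}(X)$ supplied by Proposition~\ref{prop:elliptic}, so that $\Lambda_{-\sigma}\Lambda_\sigma-\Id$ and $\Lambda_\sigma\Lambda_{-\sigma}-\Id$ lie in $\Psibf^{-\infty}(X)$ (one may streamline the bookkeeping by instead taking the $\Lambda_\sigma$ honestly invertible, using that resolvents and complex powers of an elliptic self-adjoint element such as $\Id+(rD_r)^*(rD_r)+D_t^2+D_\varphi^2$ remain in the calculus). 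Recalling the characterization of $\Hb$-regularity by elliptic test operators, $u\in\Hb^s(X)$ iff $\Lambda_s u\in L^2(X)$, and $v\in\Hb^{s-m}(X)$ iff $\Lambda_{s-m}v\in L^2(X)$. Then for $u\in\Hb^s(X)$ I would write $u=\Lambda_{-s}(\Lambda_s u)-Ru$ with $R:=\Lambda_{-s}\Lambda_s-\Id\in\Psibf^{-\infty}(X)$, so that
\[
\Lambda_{s-m}Au=\bigl(\Lambda_{s-m}A\Lambda_{-s}\bigr)(\Lambda_s u)-\bigl(\Lambda_{s-m}AR\bigr)u .
\]
Here $\Lambda_{s-m}A\Lambda_{-s}\in\Psib^{(s-m)+m+(-s)}=\Psib^0(X)$ by the filtered--algebra property, hence is $L^2(X)$-bounded, and $\Lambda_s u\in L^2(X)$; while $\Lambda_{s-m}AR\in\Psib^{-\infty}(X)$ is likewise bounded from $\Hb^s(X)$ into $L^2(X)$ by a short direct argument from $\Psib^{-\infty}(X)\subset\Psib^0(X)$ and the definition of the negative-order spaces. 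Combining, $\Lambda_{s-m}Au\in L^2(X)$, i.e.\ $Au\in\Hb^{s-m}(X)$, with norm bounded by a constant times $\|u\|_{\Hb^s(X)}$ --- the constant being visible from the $L^2$-operator norms just used, or else obtained from the closed graph theorem once $A$ is known to map $\Hb^s(X)$ into $\Hb^{s-m}(X)$.

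The calculational steps are all standard; the real content is that the three structural inputs used --- $L^2(X)$-boundedness of $\Psib^0(X)$, closure of $\Psib^\star(X)$ under composition with orders adding, and the existence of elliptic parametrices --- are available in the present \emph{uniform} (noncompact, quasi-periodic in $t$) calculus, which is exactly what the propositions above, ultimately \cite[Chapter 18.3]{Ho:07}, provide. The hardest part, I expect, is the non-integer-weight case in the reduction to $l=0$: Lemma~\ref{lemma:conjugation} as stated only handles $l\in\ZZ$, and while $r^{-l}\Psib^m(X)\,r^l=\Psib^m(X)$ for all real $l$ is a standard b-calculus fact, giving it a clean justification (via the Mellin transform, or via the interpolation identity for weighted b-Sobolev spaces) takes a little care.
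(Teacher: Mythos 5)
Your proposal is correct and follows essentially the same route as the paper, whose proof is exactly the combination of Lemma~\ref{lemma:conjugation} for integer weights, interpolation for non-integer $l$, and sandwiching by elliptic b-operators to handle general $s$ and $m$. The only divergence is your primary suggestion of extending the conjugation identity to a fractional Fourier multiplier $(1-D_\xi)^l$, which would need real justification for $l\notin\ZZ$; your parenthetical fallback via the interpolation identity for weighted b-Sobolev spaces is precisely what the paper does, so nothing is missing.
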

    \begin{proof}
Since $L^2(X)$ equipped with the metric density equals $$r^{-1} L^2(r
\, dr \, dt \, d\varphi),$$ Lemma~\ref{lemma:conjugation} implies that 
operators of order zero are bounded on $r^l L^2(X)$ for all $l \in
\ZZ.$  We can then extend to non-integer $l$ by interpolation,
establishing the result for $s=m=0.$  The more general version of the
result follows by employing elliptic operators in the b-calculus as in
the usual proof on manifolds without boundary.
      \end{proof}

We could alternatively omit the lacunary condition \eqref{lacunary} on symbols as long
as we build a cutoff function into our quantization, as in the presentation of this material in Section~\ref{sectionbPseuds}.  We must then put
the residual operators into the calculus ``by hand,'' however.  We
begin by recalling the characterization of residual operators, proved
in \cite[Theorem 18.3.6]{Ho:07}.
\begin{proposition}
 The elements of $\Psib^{-\infty}$ (a.k.a.\ \emph{residual operators})
   are those operators $R$ whose Schwartz kernels satisfy the
      following estimate in coordinates on $X^2_b$ given by
      $\rho=r+r',$ $\theta=(r-r')/(r+r')$: for all $\alpha,\beta,\gamma, N,$
      $$
\big\lvert D_{t,\varphi, t', \varphi'}^\alpha D_\rho^\beta
D_\theta^\gamma \rho \kappa(R)\big\rvert \leq C_{\alpha,\beta,\gamma,N}
(1+\abs{t-t'}+ \rho)^{-N}.
$$
  \end{proposition}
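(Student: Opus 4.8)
The statement is the uniform, $\varphi$-periodic counterpart of \cite[Theorem~18.3.6]{Ho:07}, and the plan is to reduce to that result. Recall that $\Psib^{-\infty}(X)=\bigcap_m\Psib^m(X)$, and that by Proposition~\ref{prop:equivalent} the two presentations of the calculus agree; I would work in H\"ormander's presentation, in which $\Psib^{-\infty}(X)$ is precisely the image under $\Opb$ of the symbols $a$ that decay rapidly in the fiber variables, are $2\pi$-periodic in $\varphi$, and obey the lacunary condition \eqref{lacunary}. The assertion then splits into a forward implication---that the Schwartz kernel of $\Opb(a)$ for such $a$ satisfies the stated pointwise estimate on $X^2_b$---and its converse---that any operator whose kernel satisfies that estimate arises as such a quantization.

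Both implications are contained in \cite[Theorem~18.3.6]{Ho:07} for the half-space model $\RR^3_+$, so I would first dispose of the $\varphi$-periodicity and quote that theorem. For the forward direction one writes the oscillatory integral \eqref{bquantization}, passes to the front-face coordinates $\rho=r+r'$, $\theta=(r-r')/(r+r')$, and integrates by parts: non-stationary phase in the base variable $\tau$ produces the decay factor $(1+|t-t'|+\rho)^{-N}$ (the $\ang{r}^{-N}$ weight in \eqref{Sunif} being irrelevant near $r=0$), while the lacunary condition governs the kernel at the side face $\theta=1$, i.e.\ $r'=0$, and rapid decay of the fiber Fourier transform does the same at $\theta=-1$, i.e.\ $r=0$; these are exactly the features detected by the $\rho$-prefactor together with the requirement that the bound survive arbitrarily many $\partial_\theta$'s. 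For the converse one recovers $a$ from the kernel by Fourier transform in the fiber variables and reads the symbol estimates \eqref{Sunif}, the periodicity, and the lacunary condition off the hypothesised kernel bounds. In both directions, uniformity in $t$ is already built into H\"ormander's half-space construction, which is why the $t$-decay comes out globally rather than only locally in $t$.

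It then remains to transport the half-space statement to $X=[0,\infty)_r\times\RR_t\times S^1_\varphi$ and to reconcile the density conventions. The passage from $\RR_\varphi$ to $S^1_\varphi$ is handled exactly as the $L^2$-boundedness statement earlier in this appendix, by summing over the $2\pi\ZZ$-translates in $\varphi-\varphi'$, following \cite[Section~5.3.1]{Zw:12}: a $\varphi$-periodic symbol quantizes to a periodic kernel obeying the same estimates. As for the density, the estimate is phrased for the Schwartz kernel $\kappa(R)$, whereas the operators \eqref{Opb.v1} carry the extra factor $r'/r=(1-\theta)/(1+\theta)$ and a right b-density; but $r'/r$ and its reciprocal agree with smooth bounded functions on $X^2_b$ away from the side faces, where the kernel of a residual operator is already negligible, and Lemma~\ref{lemma:conjugation} shows that residuality is insensitive to conjugation by powers of $r$, so the distinction between the metric volume form $r\,dr\,dt\,d\varphi$ and $dr\,dt\,d\varphi$ does not affect the class.

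I expect the main obstacle to be bookkeeping rather than conceptual: keeping precise track of the $\rho$-prefactor and of the several densities involved while matching H\"ormander's conormal-space description of these kernels to the elementary pointwise form used here, and verifying that his estimates, stated locally on a manifold, do upgrade to the uniform ones asserted---which they do precisely because the half-space construction is built on the uniform symbol class \eqref{Sunif} from the outset.
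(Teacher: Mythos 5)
Your proposal is correct and takes essentially the same route as the paper, which establishes this proposition simply by citing \cite[Theorem 18.3.6]{Ho:07} and noting that the leading factor of $\rho$ compensates for the b-half-density in the quantization; your extra care with the $\varphi$-periodicity and the density conventions only fills in details the paper leaves implicit. (One small inaccuracy in your sketch: the $\rho^{-N}$ decay comes from the $\ang{r}^{-N}$ symbol weight together with the rapid decay of the fiber Fourier transform toward the side faces, not from integration by parts in $\tau$, which produces only the $\abs{t-t'}$ decay.)
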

(Note that the leading factor of $\rho$ is compensating for the $\rho^{-1}$
factor in the b-half-density arising e.g.\ in \eqref{bquantization}.)

Now let $\chi(s)$ equal
$1$ for $s \in (1/2,2)$ and be supported in $(1/4,4).$
\begin{proposition} \label{prop:equivalent}
  For any $a \in S^m_\unif$ the operator
\begin{equation}\label{betterquantization}
\kappa(\tOpb(a))  \equiv \int e^{i [(r-r')\xi/r
  +(t-t')\tau + (\varphi-\varphi)' \eta]} \chi(r'/r) a(r,t,\varphi, \xi, \tau,
  \eta) \, d\xi d\tau d\eta \abs{dt\, d\varphi \, dr/r}
\end{equation}
  is in $\Psib^m(X).$  Conversely every element of $\Psib^m(X)$
  differs from an operator of this form by an element of $\Psib^{-\infty}(X).$
\end{proposition}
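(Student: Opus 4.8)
The plan is to work throughout with the partial Fourier transform $\fcal_{\xi\to w}$ in the radial fibre frequency, where, as in the remark following \eqref{lacunary}, the variable $w$ dual to $\xi$ agrees on $X^2_b$ (up to sign) with the smooth coordinate $s-1=r'/r-1$, consistently with the phase $(r-r')\xi/r=-w\xi$ appearing in \eqref{bquantization} and \eqref{Opb.v1}. In these terms the Schwartz kernel of $\Opb(\cdot)$ is built by integrating $\fcal_{\xi\to w}(\cdot)$ against $e^{i[(t-t')\tau+(\varphi-\varphi')\eta]}$, while that of $\tOpb(a)$ is built the same way from $\chi(1+w)\,\fcal_{\xi\to w}a$; the two conventions differ only by the truncation $\chi$ (which makes the lacunary condition \eqref{lacunary} unnecessary) together with a normalizing factor $r'/r=1+w$. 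The latter is a smooth function of $w$ on the support of $\chi$, so its effect is merely to replace $\fcal_{\xi\to w}a$ by $(1+w)\chi(1+w)\fcal_{\xi\to w}a$, whose inverse $w$-transform differs from that of $\chi(1+w)\fcal_{\xi\to w}a$ by $-i\partial_\xi$ applied to the latter --- a symbol of strictly lower order --- exactly as in the integration-by-parts identities behind Lemma~\ref{lemma:conjugation}. I suppress this purely clerical point below, treating $\tOpb(a)$ as built from $\chi(1+w)\,\fcal_{\xi\to w}a$ in the manner of \eqref{bquantization}.

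For the second (converse) assertion, let $A=\Opb(a)\in\Psib^m(X)$ with $a$ as in Definition~\ref{definition:psib}, and set $R:=A-\tOpb(a)$, whose kernel comes from $(1-\chi(1+w))\,\fcal_{\xi\to w}a$. Here $1-\chi(r'/r)$ is a smooth function on $X^2_b$ which equals $1$ near both side faces $s=0$ and $s=\infty$ and vanishes identically near $\{s=1\}$, hence near the lifted diagonal and a fortiori near the singular support of $\kappa(A)$; multiplying $\kappa(A)$ by it therefore removes the whole conormal singularity, leaving a $\mathcal{C}^\infty$ kernel. The decay demanded of a residual operator is inherited from $\kappa(A)$: the lacunary condition \eqref{lacunary} forces infinite-order vanishing at the right face; non-stationary phase in $\xi$ (the phase $w\xi$ has no critical point for $w\ne 0$) gives rapid decay of $\fcal_{\xi\to w}a$ as $|w|\to\infty$ and hence rapid vanishing at the left face; that same estimate together with the weight $\ang{r}^{-N}$ in \eqref{Sunif} gives rapid decay in $\rho=r+r'$; and, since $(1-\chi(1+w))\fcal_{\xi\to w}a$ is supported away from $w=0$ and is there rapidly decaying in $(\tau,\eta)$, repeated integration by parts in $\tau$ gives rapid decay in $|t-t'|$, periodicity in $\varphi$ being untouched. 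Thus $R\in\Psib^{-\infty}(X)$, i.e.\ $A=\tOpb(a)+R$.

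For the first assertion, given $a\in S_{\unif}^m(\Tbstar X)$ define $b:=\fcal^{-1}_{w\to\xi}\big(\chi(1+w)\,\fcal_{\xi\to w}a\big)$. Then $b$ is $2\pi$-periodic in $\varphi$; it is lacunary, since $\chi(1+w)$ is supported where $1+w\in(1/4,4)$ and in particular vanishes for $w\le -1$; and $\Opb(b)=\tOpb(a)$ directly from the kernel descriptions above. The one thing requiring proof is that $b\in S_{\unif}^m(\Tbstar X)$, which I would get from the splitting $b=a-(a-b)$ once $a-b\in S_{\unif}^{-\infty}$ is shown. We have $\fcal_{\xi\to w}(a-b)=(1-\chi(1+w))\fcal_{\xi\to w}a$, supported in $\{|w|\ge \tfrac12\}$; on that region $\fcal_{\xi\to w}a$ and all its $(r,t,\varphi,w,\tau,\eta)$-derivatives satisfy, uniformly, bounds $\lesssim\ang{w}^{-N}\ang{(\tau,\eta)}^{-N}\ang{r}^{-N}$ for every $N$ --- writing $\partial_w^k\fcal_{\xi\to w}a=\fcal_{\xi\to w}((-i\xi)^k a)$ and integrating by parts repeatedly in $\xi$, each integration producing a factor $w^{-1}$ and lowering the fibre order, one estimates the resulting absolutely convergent $\xi$-integral by splitting $\{|\xi|\lesssim\ang{(\tau,\eta)}\}$ from its complement and using \eqref{Sunif}. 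Hence $(1-\chi(1+w))\fcal_{\xi\to w}a$ is Schwartz in $w$ with values in order-$(-\infty)$ symbols in $(\tau,\eta)$, uniformly in $t$, periodic in $\varphi$, with the $\ang{r}^{-N}$ weight preserved; its inverse $w$-transform $a-b$ is then Schwartz in $\xi$ and of order $-\infty$ in $(\tau,\eta)$, so $a-b\in S_{\unif}^{-\infty}$ and $b\in S_{\unif}^m$, completing the proof.

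The main obstacle is precisely this last estimate: the quantitative, fully uniform (in $t$) control of $\fcal_{\xi\to w}a$ away from $w=0$, with joint decay in $(w,\tau,\eta)$ and with the $\ang{r}^{-N}$ weight and the $\varphi$-periodicity preserved. This is standard microlocal bookkeeping --- and is in fact subsumed in the half-space development of \cite[Section 18.3]{Ho:07}, which one could alternatively invoke wholesale --- but it is where all the care resides; everything else is a formal comparison of the two kernel formulas together with the clerical density-matching noted in the first paragraph.
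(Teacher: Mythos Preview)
Your proposal is correct and is essentially the same approach as the paper's, which simply cites \cite[Lemma~18.3.4]{Ho:07}: that lemma is precisely the statement that cutting off the partial Fourier transform $\fcal_{\xi\to w}a$ by $\chi(1+w)$ produces a lacunary symbol $b$ differing from $a$ by an element of $S^{-\infty}$, and your argument (integration by parts in $\xi$ away from $w=0$ to gain joint decay in $(w,\tau,\eta)$) is exactly how that lemma is proved. You have, in effect, reproduced H\"ormander's proof with the uniformity in $t$, the $\varphi$-periodicity, and the density-matching factor $r'/r$ tracked explicitly---all of which are indeed purely clerical in this setting.
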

This result follows from \cite[Lemma 18.3.4]{Ho:07}.
  \bibliographystyle{abbrv}

\bibliography{CosmicStrings}

\end{document}